\newcommand{\R}{\mathbb R}
\newcommand{\N}{\mathbb N}
\newcommand{\Prob}{\mathbb P}
\newcommand{\Zd}{{{\mathbb Z}^d}}
\newcommand{\eps}{\varepsilon}
\newcommand{\dd}{{\mathrm d}}
\newcommand{\pr}{\mathbb P}
\newcommand{\ident}{{\mathchoice {\rm 1\mskip-4mu l} {\rm 1\mskip-4mu l}
{\rm 1\mskip-4.5mu l} {\rm 1\mskip-5mu l}}}
\newtheorem{teo}{Theorem}[section]
\newtheorem{lem}[teo]{Lemma}
\newtheorem{cor}[teo]{Corollary}
\newtheorem{rem}[teo]{Remark}
\newtheorem{pro}[teo]{Proposition}
\newtheorem{defn}[teo]{Definition}
\title
{The timing of life history events in presence of soft disturbances}
\author
{Daniela Bertacchi\thanks{The first two authors contributed equally to this work.} \\
Dipartimento di Matematica e Applicazioni\\
Universit\`a di Milano--Bicocca\\
via Cozzi 53, 20125 Milano, Italy\\
daniela.bertacchi\@@unimib.it
\and
Fabio Zucca$^*$ \\
Dipartimento di Matematica,\\
Politecnico di Milano,\\
Piazza Leonardo da Vinci 32, 20133 Milano, Italy.\\
fabio.zucca\@@polimi.it
\and
Roberto Ambrosini\\
Dipartimento di Biotecnologie e Bioscienze,\\
Universit\`a di Milano--Bicocca\\
Piazza della Scienza 2-4, 20126 Milano, Italy\\
roberto.ambrosini\@@unimib.it
}
\date{}
\begin{document}

\maketitle

\begin{abstract}
We study a model for the evolutionarily stable strategy (ESS) used by biological populations for choosing the time
of life-history events, such as migration and breeding. In our model
we accounted for both intra-species competition (early individuals have a competitive advantage) and a disturbance
which strikes at a random time, killing a fraction $1-p$ of the population.
Disturbances include spells of bad weather, such as freezing or heavily raining days.
It has been shown in \cite{cf:IwLe95}, that when $p=0$, then
the ESS is a mixed strategy, where individuals wait for a certain time and afterwards
start arriving (or breeding) every day. We remove the constraint $p=0$ and show that if $0<p<1$
then the ESS still implies a mixed choice of times, but strong competition may lead to a massive arrival
at the earliest time possible of a fraction of the population, while the rest 
will arrive throughout the whole period during which the disturbance may occur. 
More precisely, given $p$, there is a threshold for the competition
parameter $a$, above which massive arrivals occur and below which there is a behaviour as in
\cite{cf:IwLe95}. We study the behaviour of the ESS and of the average fitness of the population, depending on the parameters
involved. We also discuss how the population may be affected by climate change, in two respects:
first, how the ESS should change under the new climate and whether this change implies
an increase of the average fitness; second, which is the impact of the new climate on a population
that still follows the old strategy. We show that, at least under some conditions, extreme weather
events imply a temporary decrease of the average fitness (thus an increasing mortality). 
If the population adapts to the new climate, the survivors 
may have a larger fitness.
\end{abstract}

\noindent {\bf Keywords}: Evolutionarily stable strategy, fitness, climate change,
extreme events, phenology 

\noindent {\bf AMS subject classification}: 28A25

\baselineskip .6 cm

\section{Introduction}
\label{sec.intro}

Proper timing of life-history events, like emergence, germination, migration or breeding, is crucial for survival 
and successful reproduction of almost all organisms. Timing may be set by endogenous rhythms or by extrinsic 
environmental clues (e.g.~day length or temperature, \cite{cf:SainoAmbr08}), 
but in almost all cases timing seems to have evolved according to 
two contrasting selective pressures. On the one hand, the first individuals that emerge or arrive at a given site often perform better, 
because they can profit from the better habitats and benefit from reduced competitions (at least for some time). 
On the other hand, however, early individuals may suffer from higher mortality, as they expose themselves to the 
risk of adverse environmental 
conditions, which usually are more likely early than late in the season. Autumn migration may seem an exception to this pattern, 
as the risk of mortality is probably larger for late than early departing individuals. However, in this case, migrants 
may benefit from a longer stay 
in their breeding grounds (allowing to rise a further brood or acquire larger fat reserves for migration). 
At the end, this pattern can be seen as the exact reverse of the process going on in spring, and can therefore be modelled in 
the same way.

In a seminal work Iwasa and Levin \cite{cf:IwLe95}
have provided a first theoretical description of how the risk of incurring in adverse environmental 
conditions may shape the timing of life history events of a population,
as a result of evolution over many generations. Under the assumption
that adverse conditions (\textit{disturbance} according to their 
definition, which we will follow hereafter) strike at a random time and  are so strong that no individual incurring 
in the disturbance can survive, the authors show that (in many cases) the evolutionarily
stable strategy (ESS from here on) is an asynchronous choice of times in the population.
This asynchronicity has been observed in various settings and modelled by several authors (see 
\cite{cf:BessIw12}, \cite{cf:CalFag04}, \cite{cf:LevinIwetal2001}, 
 \cite{cf:synchrrepr99}, \cite{cf:timingofreprod01}
just to mention a few).
In this paper we extend Iwasa and Levin's model to
the broader scenario where the disturbance is \textit{soft}, meaning that it kills 
each individual with probability $1-p$ (in many cases some individuals 
in the population survive even to dramatic adverse conditions). 
The model of Iwasa and Levin 
can be seen as a particular case of ours when $p=0$. 

Before going into the details of our model, we have to mention that it focuses on the 
long time behaviour of a very large population. Indeed it is implicitly assumed that
when the existence of mutants with better fitness is theoretically
possible, then such mutants will appear and spread across the population. This does not take into account the disappearance
(in finite populations) of certain alleles by mere random factors, a phenomenon which can be studied 
by means of mathematical population
genetics (see for instance \cite{cf:ewens}). Long time behaviour of finite populations can also be studied through spatial models,
namely \textit{interacting particles systems}. Space not only adds complexity \cite{cf:DurrLevin},
 but may also be interpreted as ``type'', that is the location of one or more individuals can be seen 
as representing their genotype.
For the simplest among this models, the \textit{branching random walk}, much has been done: 
for instance in
\cite{cf:BZ, cf:BZ2, cf:BZ4, cf:Muller08-2, cf:Stacey03, cf:Z1} one finds characterization of the 
persistence/disappearance of genotypes (seen as locations for the model), on general space structures;
the same can be found, for some random graphs, in \cite{cf:BZ5,  cf:PemStac1}.
Stochastic modelling and interacting particle systems have been 
successfully applied to biology and ecology 
(see \cite{cf:BBZ, cf:BLZ, cf:BPZ, cf:CFMel08, cf:CDLan09, cf:KLan12, cf:Lan13,
cf:LanNeu06, cf:Mel90, cf:GaMaSch, cf:Z2014} just to mention a few).
Although stochastic modelling is very interesting and complex, 
here we will assume that over many generations, our populations
have been sufficiently large to justify the use of a model where stochasticity appears only
in the random time at which the disturbance strikes.
%

As for terminology, in the present work the life-history events under study are arrival times, meaning that
we focus on migratory birds and the time of arrival to their breeding grounds.
This choice of words should not be considered a reduction in the scope of this paper, since 
our modelling approach is very broad, as it applies to the investigation of the timing of 
any life-history event when the benefits from being early and the risk of incurring in a soft 
disturbance are in conflict.
Migratory birds are a well studied biological system where timing is crucial 
for the fitness of individuals, and where a long record of adverse conditions killing or impairing the 
reproduction of individuals exists (see e.g.~\cite{cf:Newton2007}). 
Moreover the interest in the timing of these recurrent biological events is very strong among biologists 
after that several studies have consistently observed an advancement in arrival and reproduction of 
birds supposedly as a consequence of climate change 
(see \cite{cf:Both-et-al.2004}, \cite{cf:Cotton2003}, \cite{cf:Walther2002}). 
Climate change not only implies warming temperatures, but also higher frequency 
and intensity of extreme meteorological events (see \cite{IPCC2012}). This increased weather unpredictability 
may severely affect migrant birds, because warmer springs prompt birds towards earlier arrivals, 
while more frequent unseasonable weather increases the risk of mass mortality events (see \cite{cf:Schi05}
for a stochastic model for random catastrophes striking a spatially structured population).
It is widely accepted that climate change is endangering migrant populations (see \cite{cf:SainoAmbrProcRoy}).
Ornithologists therefore strongly need models investigating the contrasting forces affecting 
the timing of bird migration to improve their understating of the ongoing ecological processes and to plan better 
conservation strategies for declining migrant populations.
We point out that even if our primary interest here are evolutionarily stable strategies which arise in large
populations after many generations of stable climate (an equilibrium situation), our study also allows us
to analyze some effects of a sudden climate change (an off-equilibrium dynamics). Indeed our results
not only describe the ESS (Theorem~\ref{th:implicit}) but also the effects of the climate change on the fitness
of a population not yet adapted to the change 
(Propositions~\ref{pro:timeintervals}, \ref{pro:averagefitnessclimatechange}
and \ref{pro:changepaveragefitness}).

Here is a short outline of the
results of the paper.
In Section~\ref{sec:model} we introduce the model and the notation.
We consider an intra-species competition regulated by a parameter $a \ge 0$ and 
we define the fitness $\psi_\mu$ as a function of the arrival time conditioned
on the disturbance time, the expected fitness $\phi_\mu$ depending only on the arrival time
(averaged over all admissible disturbance times) and, later on, the average fitness $\bar \lambda_\mu$
(averaged over all disturbance and arrival times). We describe the meaning of 
an \textit{evolutionarily stable strategy} (ESS) and
we discuss the easiest cases where either there is no competition ($a=0$) or the probability $p$
of surviving the disturbance is 1,  i.e.~the disturbance has no effect whatsoever (Remark~\ref{rem:a0}).
Section~\ref{sec:mainres} contains our main results. Theorem~\ref{th:implicit} extends
\cite[Appendix A]{cf:IwLe95} and shows that
there is only one possible ESS for the population in response to a fixed disturbance distribution,
which we imagine supported in $[\underline{t}_f,\overline{t}_f]$.
The behaviour of the ESS depends on the value of $(a,p)$. Indeed
there exists a function of $p$, say $a_M=a_M(p)$, such that if $a<a_M$ then the population
starts arriving after a date $x_c>\underline t_f$ and there are arrivals every day until $\overline t_f$; 
if $a=a_M$ then continuous arrivals starts at $\underline t_f$;
if $a>a_M$ then a fraction $\gamma$ of the population arrives at time 0 and the rest arrives
continuously starting from $\underline t_f$.
The dependence on $a$ and $p$ of the fitness of individuals following the ESS is discussed; in particular we show
(see Remark~\ref{rem:maximal}) that the theoretical maximum value of the average fitness $\bar \lambda_\mu$ is not attained by any
ESS. This proves that, even though an ESS is a strategy that each member of the colony considers fair,
it is not the best choice for the colony as a whole. The dependences of every relevant coefficient on
$a$, $p$ and the disturbance distribution are summarized in a table before the beginning of
Section~\ref{sec:uniformdist}.
As an example, we describe the case where the disturbance strikes according to a uniform
distribution (see Section~\ref{sec:uniformdist}).
The effects of climate changes are studied in Section~\ref{sec:climatechange}.
The main questions discussed here are the following. How does the 
ESS change after a climate modification?
What happens if a population would keep the same strategy after a climate change? 
A worse climate, that is a smaller $p$, may change the shape of the ESS delaying the first
arrivals and increasing the fitness of each individual (provided that the population survives the transition).
If the distribution of the disturbance is linearly rescaled, the same happens to the ESS.
The behaviour of the fitness, before the strategy adapts, is studied 
in Propositions~\ref{pro:timeintervals}, \ref{pro:averagefitnessclimatechange}
and \ref{pro:changepaveragefitness}.
A consistent delay of the disturbance
reduces the fitness of all individuals following the former ESS.
If the disturbance arrives earlier than in the past, then
the average fitness of the population increases.
When the competition is weak
then a decrease of $p$ implies a killing of a larger
fraction of the population and hence a lower average fitness.
In Section~\ref{sec:discuss} we discuss and summarize the conclusion of the paper.
Section~\ref{sec:proofs} is devoted to the proofs of our results.

\section{The model}
\label{sec:model}

Iwasa and Levine \cite{cf:IwLe95} studied different ways to model the fact
that individuals choosing an early date of arrival, if no disturbance were present, 
would obtain a larger fitness than those arriving later.
This may be due to the fact that a decrease of reproductive success (better
resources at earlier times, \cite[Case 1]{cf:IwLe95})
or to competition between individuals, for instance those who arrive earlier
may feast on food, while those arriving later will not (\cite[Case 2]{cf:IwLe95}).
We focus on the second case, where Iwasa and Levin proved the emergence of a
mixed strategy for the arrival dates (which is more interesting and realistic than
Case 1, where all individuals choose the same date).

Suppose that $\mu$ is the probability measure, supported on $[0,+\infty)$,
 according to which individuals
choose the date of arrival to the breeding sites. 
Its cumulative distribution function $F_\mu$ is defined,
as usual, by $F_\mu(x):=\mu((-\infty,x])$ for all $x \in \mathbb{R}$.
The population is struck by a single disturbance (e.g.~storm, frost) whose date is randomly
distributed on $(\underline{t}_f,\overline t_f)$ with density $f$.
It may be that $\underline{t}_f=0$ ($\underline{t}_f$ is
the first possible date of disturbance while $\overline{t}_f$ is the last).
Individuals already present when the disturbance occurs, survive with probability $p\in [0,1]$.
The fitness of an individual is a decreasing function of the fraction of individuals
(in the whole population) who are
already present when it arrives, and of the parameter $a\ge 0$ 
which represents the strength of intra-species competition. 
Inspired by \cite{cf:IwLe95} we choose the following expression for
the fitness  $\psi_\mu$ of an individual arriving at date $y$, given
that the disturbance strikes at time $x$:
\[
 \psi_\mu(y|x)=\begin{cases}
	    p\exp(-aF_\mu(y)) & \text{if }0\le y\le x;\\
	    \exp(-apF_\mu(x)-aF_\mu(y)+aF_\mu(x)) & \text{if } y>x.
           \end{cases}
\]
Thus $\psi_\mu(y|x)$ is the fitness 
(under the strategy $\mu$) of an individual arrived at time $y$  
when the disturbance strikes at time $x$.
It is worth noting that we are implicitly exploiting the Law of Large Numbers (LLN),
since the exact value of $\psi_\mu(y|x)$, for instance if $y\le x$, is
\[
 \psi_\mu(y|x)=p\exp(-a N(y)/N),
\]
where $N(y)$ is the random number of individuals arrived before $y$ and $N$ is
the population size. If $N$ is large, the LLN implies that $N(y)/N$ can be
approximated by $F_\mu(y)$. Similarly one proceeds with the case $y>x$.

We consider the expectation of the fitness, with respect to the disturbance date:
the expected fitness of an individual arrived at $y$ is
\[
 \phi_\mu(y)=\int_0^{\overline t_f}\psi_\mu(y|x)f(x)\dd x.
\]
More explicitly, for $y\ge0$,
\begin{equation}\label{eq:phiexplicit}
\begin{split}
 \phi_\mu(y)&=\exp(-aF_\mu(y))\Big[\int_0^y\exp(a(1-p)F_\mu(x))f(x)\dd x+p\int_y^{\overline t_f}f(x)\dd x \Big]\\
 &=\exp(-aF_\mu(y))\Big[\int_0^y (\exp(a(1-p)F_\mu(x))-p)f(x)\dd x+p\Big] \\
 \end{split}
\end{equation}
We note that, in the last integral of equation~\eqref{eq:phiexplicit}
(as well in every integral of a function of type $k(x)f(x)$ in the sequel), one could use $+\infty$ instead of $\overline t_f$ 
and the value of the integral would be the same (as the definition of $\phi_\mu$); moreover
if $y>\overline{t}_f$, by $\int_y^{\overline t_f}$ we mean $-\int^y_{\overline t_f}$.
%

We assume that, the population follows evolutionarily stable strategies,
that is, distributions of arrival times which grant no advantage to any particular
choice of arrival date. More precisely, 
an \textit{evolutionarily stable strategy} (\textit{ESS})  $\mu$, is such that
no mutant can have an advantage, namely 
for all $y\in \mathrm{supp}(\mu)$ and for all $z\in\R^+$, 
$\phi_\mu(y)\ge\phi_\mu(z)$. 
This implies that $\phi_\mu(y)=\lambda$ for all $y\in \mathrm{supp}(\mu)$, where
$\lambda:=\sup\{\phi_\mu(y)\colon y\in\R^+\} \le 1$.
We recall that $\mathrm{supp}(\mu)$ is the (closed) set of $y$ such that $\mu(y-\eps,y+\eps)>0$ 
for all $\eps>0$. We will also need to define
the essential support $\mathrm{Esupp}(f)$ of a real function $f$, which is
the support of the associated measure $A\mapsto\int_Af(x)\dd x$ (for instance,
if $f$ is continuous then $\mathrm{Esupp}(f)=\overline{\{f \not = 0\}}$).
We recall that in \cite{cf:IwLe95} the probability distribution of the disturbance was considered as
absolutely continuous with a single peak density $f$ which was taken, on $[0,\overline{t}_f]$ 
as a polynomial vanishing at the extrema of the interval.
We only assume that
$f$ is a probability density, supported in $[0,\overline{t}_f]$.
From now on, without loss of generality, we assume $\overline t_f:=\max \mathrm{Esupp}(f)$ 
and $\underline t_f:=\min \mathrm{Esupp}(f)$.

%
%
%

We are interested in studying $\mu$ as a function of $a$ and $p$ (and of $f$, but here $f$ is thought as
fixed).
The extremal cases where either $a=0$ or $p=1$ are easy to describe.

\begin{rem}\label{rem:a0}
\begin{enumerate}
 \item If $a=0$ and $p=1$, there is no competition and the disturbance has no effect.
Then $\phi_\mu(y)=1$ for all $y>0$; moreover, every $\mu$ is an ESS (indeed the disturbance has no chance to shape an evolutionary
response of the species).
\item If $a=0$ and $p<1$, there is no competition. From equation~\eqref{eq:phiexplicit} we have,
$ \phi_\mu(y)=1-(1-p)\int_y^{\overline t_f}f(z)\dd z$ which is non decreasing and continuous 
and $\phi_\mu(y)=1$ for all $y \ge \overline t_f$.
We see here that there is no dependence on $\mu$. A probability measure $\mu$ is thus an ESS if and only if 
$\mathrm{supp}(\mu) \subseteq [\,\overline t_f, \infty)$. This means that all idividuals will arrive after the last
possible date of disturbance.
\item If $a>0$ and $p=1$, there is competition and the disturbance has no effect.
From equation~\eqref{eq:phiexplicit} we have $\phi_\mu(y)=\exp(-aF_\mu(y))$ 
which is right-continuous and nonincreasing. Using the same arguments as in the proof of 
Theorem~\ref{th:implicit} 
it is straightforward to prove that there is a unique ESS, namely $\mu=\delta_0$. Everybody arrives at the 
first possible arrival date (indeed there is no risk in doing so).
\item If  $a>0$ and $p=0$, the case has been studied in \cite[Case 2]{cf:IwLe95} and can be retrieved as a
particular case of Theorem~\ref{th:implicit}. There exists a
critical date $x_c$ (depending on $a$ and $f$) after which individuals start arriving according to an absolutely continuous
measure $\mu$ such that $supp(\mu)=(x_c,\overline t_f)$. The expected fitness of each individual
is $\lambda=\frac{1}{1+a}$.
\end{enumerate}
\end{rem}

The interesting case is when $a>0$ and $p<1$, that is, competition in the population and
effective disturbance.
These are the constraints which we assume thereafter.

\section{Main result}
\label{sec:mainres}

We are able to prove (Theorem~\ref{th:implicit}) that given $p<1$, there exists a
critical $a_M$, depending only on $p$ (not on $f$) such that:
\begin{enumerate}
 \item if $a<a_M$ then the ESS is as follows: there is a critical date $x_c>\underline{t}_f$ 
(depending on $a$, $p$ and $f$) after which individuals arrive continuously while
disturbances are possible (see for instance Figure~\ref{fig:impulse02-02-05-09}).
This extends the previously known result for $p=0$ since $a_M(0)=+\infty$ (see \cite[Case 2]{cf:IwLe95});
\item if $a=a_M$ then the ESS is as before, with $x_c=\underline{t}_f$ (individuals arrive throughout the whole
period of possible disturbance, see for instance Figure~\ref{fig:impulse5-02-05-09});
\item if $a>a_M$ then the ESS is such that a fraction $\gamma$ of individuals arrive at 0, and
the remaining arrive continuously during the whole
period of possible disturbance (see for instance Figure~\ref{fig:impulse5-05-05-09}).
\end{enumerate}

Before stating our main result, we define the quantities $a_M$, $x_c$ and $\gamma$.
\begin{defn}
\label{def:aM}
 If $p=0$, then $a_M(p):=+\infty$; if $p\in(0,1)$, then $a_M(p)$ 
is the solution to the equation
\begin{equation}\label{eq:aM}
  \int_1^{\exp(a_M)}\frac{\dd z}{z^{1-p}-p} =\frac{1}{p}.
\end{equation}
\end{defn}
Note that the solution to equation \eqref{eq:aM} exists and is unique 
since the l.h.s.~is a continuous, strictly increasing function of $a_M$
which vanishes at $a_M=0$ and goes to infinity as $a_M\to \infty$. The inverse function of $a_M$ 
will be denoted by $p_M(a)$ (by definition $p_M(+\infty):=0$). Clearly $a \le a_M(p)$ (resp.~$a \ge a_M(p)$)
if and only if $p \le p_M(a)$ (resp.~$p\ge p_M(a)$).

\begin{defn}
If $a > a_M(p)$ let $x_c=x_c(a,p,f):=0$. If $a \le a_M(p)$ 
define $x_c$ as the maximal solution to the equation
\begin{equation}\label{eq:xc}
 \int_{x_c}^{\overline t_f}f(x)\dd x=\Big( 1-p+\Big( \int_1^{\exp(a)}\frac{\dd z}{z^{1-p}-p} 
\Big)^{-1}\Big)^{-1}.
\end{equation}
\end{defn}
Note that the solution to equation \eqref{eq:xc} exists and is unique, 
since the r.h.s.~is positive and strictly smaller than 1, while the l.h.s.~is a continuous, nonincreasing 
function of $x_c$ which takes values $1$ and $0$ at $x_c=0$ and $x_c=\overline t_f$ respectively. 
By definition if $a \le a_M(p)$, $x_c \in \mathrm{Esupp}(f)$.

\begin{defn}
 \label{def:gamma}
 If $a \le a_M(p)$ let $\gamma=\gamma(a,p):=0$. If $a > a_M(p)$ let $\gamma$ be
the unique solution to the equation
\begin{equation}\label{eq:gamma}
\exp(-a\gamma)\int_{\exp(a\gamma)}^{\exp(a)}\frac{\dd z}{z^{1-p}-p}=\frac1p.
\end{equation}
\end{defn}
The solution exists and is unique since the l.h.s.~is a continuous, strictly decreasing
function of $\gamma$ which takes values in $(1/p, +\infty)$ when $\gamma=0$ and is equal to $0$ 
when $\gamma=1$. Observe that $\gamma(a,0)=0$ for all $a>0$.

Note that, in accordance with \cite{cf:IwLe95}, if $p=0$ then $a_M=+\infty$, $\gamma=0$, and 
$\int_{x_c}^{\overline t_f}f(x)\dd x=a/(1+a)$. Let us discuss some general properties of 
$a_M$, ${x_c}$ and ${\gamma}$. For details on the proofs,
see Section~\ref{sec:proofs}.
\begin{rem}
\label{rem:properties}
\begin{enumerate}
 \item The map $p\mapsto a_M(p)$  is continuous and
strictly decreasing; see Figure~\ref{fig:aM}. 
By using  elementary techniques of implicitly defined functions it is not
difficult to prove that 
  $ \lim_{p\to0^+} a_M(p)=+\infty$, 
$\lim_{p\to1^-}a_M(p)=0$.
\item 
Given a fixed $f$, 
we have that $p \mapsto x_c$ and $a\mapsto x_c$  are 
strictly decreasing and left continuous everywhere. The map
$p \mapsto x_c$ is right continuous at $p_0$ 
only in the following cases:
(a) $a>a_M(p_0)$; 
(b) $a<a_M(p_0)$ and $(x_c(a,p_0,f)-\varepsilon,x_c(a,p_0,f)] \subseteq \mathrm{Esupp}(f)$ 
for some $\varepsilon >0$; 
(c) $a=a_M(p_0)$ and
$\underline t_f =0$.
Similarly $a \mapsto x_c$
is right continuous at $a_0$ 
only in the cases:
(a) $a_0>a_M(p)$;
(b) $a_0<a_M(p)$ and  $(x_c(a_0,p,f)-\varepsilon,x_c(a_0,p,f)] \subseteq \mathrm{Esupp}(f)$ 
for some $\varepsilon >0$;
(c) $a_0=a_M(p)$ 
and $\underline t_f=0$.
Again using standard techniques of implicitly defined functions we have 
\begin{equation}\label{eq:xclimits}
 \begin{split}
  \lim_{p\to0^+} x_c(a,p,f) &\le x_c(a,0,f)<\overline t_f= \lim_{a\to0^+} x_c(a,p,f);\\
  \lim_{p\to p_M(a)^-}x_c(a,p,f)&=\underline t_f=\lim_{a\to a_M(p)^-}x_c(a,p,f).\\
\end{split}
\end{equation}
\item
The function $(a,p)\mapsto \gamma(a,p)$ is continuous on $(0,+\infty)\times[0,1)$ and 
$p \mapsto \gamma(a,p)$  is strictly increasing (for all fixed $a \in (0,+\infty)$).
Moreover $\gamma(a,p) <p$ for all $p\in (0,1)$ and for all $a > 0$; 
$a \mapsto \gamma(a,p)$ is strictly increasing in $[a_M(p),+\infty)$ for all fixed $p \in (0,1)$ (see Remark~\ref{rem:lambdadecreasing}). 
%
%
See Figures~\ref{fig:gammap} and \ref{fig:gammaa} for some plots of
$p \mapsto \gamma(a,p)$
and $a \mapsto \gamma(p,a)$.
Moreover, we have
\[   
\lim_{p\to0^+} \gamma(a,p)=0,\qquad \lim_{p\to 1^{-}} \gamma(a,p)=1, \qquad 
   \lim_{a \to \infty} \gamma(a,p)=p,
\]
where the last limit holds for all $p>0$.
\end{enumerate}
\end{rem}

%
%
\begin{figure}[H]
  \centering
    \includegraphics[width=0.35\textwidth]{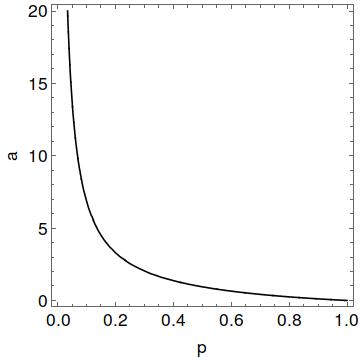}
  \caption{
$p \mapsto a_M(p)$.}\label{fig:aM}
\end{figure}

 \begin{figure}[ht!]
 \begin{minipage}{0.5\textwidth}
 \centering
  \includegraphics[width=0.75\textwidth]{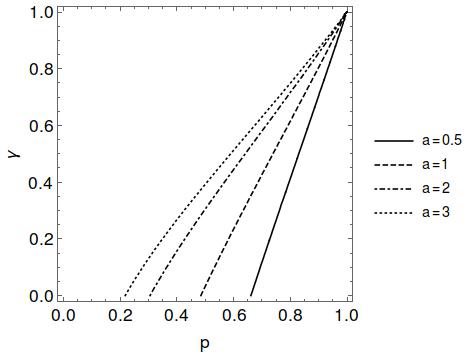}
  \caption{$p \mapsto \gamma(p,a)$; $a=3,2,1,0.5$.}\label{fig:gammap}
 \end{minipage}
\begin{minipage}{0.5\textwidth}
\centering
 \includegraphics[width=0.75\textwidth]{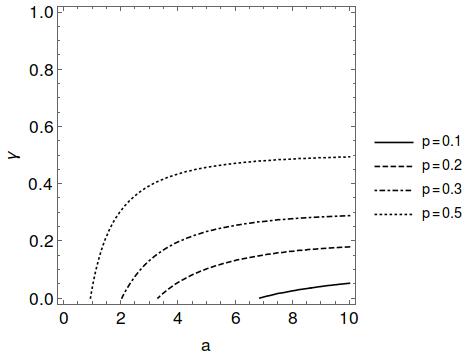}
  \caption{$a \mapsto \gamma(p,a)$; $p=0.5,0.3,0.2,0.1$.}\label{fig:gammaa}
\end{minipage}
\end{figure}


\begin{teo}\label{th:implicit}
Let $a_M$, $x_c$ and $\gamma$ be as previously defined.
 Given $a>0$, $p\in[0,1)$ and $f$ a probability density 
supported in $[0,\overline t_f])$ (where by definition $\overline t_f=\max \mathrm{Esupp}(f)$),
there exists a unique ESS $\mu$.
 In particular, $\mu=\gamma\delta_0+(1-\gamma)\nu$, where $\nu$ is an absolutely
 continuous probability measure with  
$\mathrm{supp}(\nu)=[x_c,\overline t_f]\cap\mathrm{Esupp}(f)$.
 The cumulative distribution function $F_\nu$ is implicitly defined, on $x \ge x_c$, by
 \begin{equation}\label{eq:implicit}
  \int_{\exp(a\gamma)}^{\exp(a(1-\gamma)F_\nu(x)+a\gamma)}
\frac{\dd z}{z^{1-p}-p}=\frac{1}{\lambda}\int_{x_c}^xf(y)\dd y.
 \end{equation}
 The value of $\lambda$ (the supremum of $\phi_\mu$) is
 \begin{equation}\label{eq:deflambda}
  \lambda=\lambda(a,p)=
 \begin{cases}
 \displaystyle \frac{1}{1+(1-p)\int_1^{\exp(a)}\frac{\dd z}{z^{1-p}-p}} & \text{if } a \le a_M\\
p\exp(-a\gamma) & \text{if } a > a_M.
 \end{cases}
 \end{equation}
 Finally, $a_M$ is a phase transition value for the competition in the sense that
\[
 \begin{split}
   a<a_M\quad &\Rightarrow\quad \gamma=0,\ x_c>\underline t_f;\\
   a=a_M\quad &\Rightarrow\quad \gamma=0,\ x_c=\underline t_f;\\
   a>a_M\quad &\Rightarrow\quad \gamma \in (0,(1-a_M/a) \wedge p), 
   \ x_c=0.
 \end{split}
\]
\end{teo}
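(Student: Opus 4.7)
The plan is to derive the structure and parameters of the ESS from the defining identity $\phi_\mu(y)=\lambda$ on $\mathrm{supp}(\mu)$, then verify the complementary inequality $\phi_\mu(z)\le\lambda$ off the support, and finally invoke the uniqueness of the solutions already established in the definitions of $a_M$, $x_c$ and $\gamma$. Throughout I would write $G(y):=p+\int_0^y(e^{a(1-p)F_\mu(x)}-p)f(x)\dd x$ so that $\phi_\mu(y)=e^{-aF_\mu(y)}G(y)$; note that $G$ is continuous in $y$, while $e^{-aF_\mu(y)}$ is right-continuous and jumps down exactly at the atoms of $\mu$.

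I would first establish the qualitative shape of any ESS $\mu$. If $y_0>0$ were an atom then $y_0\in\mathrm{supp}(\mu)$ forces $\phi_\mu(y_0)=\lambda$, but continuity of $G$ at $y_0$ yields $\phi_\mu(y_0^-)=\lambda e^{a\mu(\{y_0\})}>\lambda$, contradicting the ESS inequality $\phi_\mu\le\lambda$ as $y\nearrow y_0$; hence the only possible atom sits at $0$, and I set $\gamma:=\mu(\{0\})$. Next, on the continuous part of the support the identity $G(y)=\lambda e^{aF_\mu(y)}$ forces $e^{aF_\mu}$ (and hence $F_\mu$) to be absolutely continuous, ruling out singular continuous components. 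Using the right-continuity of $\phi_\mu$ together with the fact that $F_\mu$ must stay flat off the continuous support, one sees that this support is $[x_c,\overline t_f]\cap\mathrm{Esupp}(f)$ for some $x_c\ge 0$: the upper endpoint is $\overline t_f$ because $G$ stops growing beyond it while $e^{-aF_\mu}$ still decreases; gaps of $\mathrm{Esupp}(f)$ must be gaps of $\mathrm{supp}(\nu)$ because the ODE derived below has no solution where $f=0$.

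Differentiating the constancy relation on the continuous support produces the ODE
\[
(e^{a(1-p)F_\mu(y)}-p)f(y)=a\lambda F_\mu'(y)e^{aF_\mu(y)}.
\]
Writing $F_\mu=\gamma+(1-\gamma)F_\nu$, separating variables and substituting $z=e^{aF_\mu(y)}$ gives exactly \eqref{eq:implicit}. Boundary data then pin the parameters. Setting $y=\overline t_f$ in \eqref{eq:implicit} (so $F_\nu=1$) provides one relation; in the atomless regime ($\gamma=0$, $x_c\ge\underline t_f$) the explicit formula $\phi_\mu(y)=(1-p)\int_0^y f(x)\dd x+p$ for $y\in[0,x_c]$ together with $\phi_\mu(x_c)=\lambda$ supplies a second relation, and solving the pair yields \eqref{eq:xc} and the first branch of \eqref{eq:deflambda}. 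In the atom regime ($\gamma>0$, $x_c=0$) the condition $\phi_\mu(0)=\lambda$ reads $\lambda=pe^{-a\gamma}$, which combined with the relation at $y=\overline t_f$ yields \eqref{eq:gamma} and the second branch of \eqref{eq:deflambda}. The phase transition value $a_M$ arises precisely where the two regimes meet: forcing $x_c=0$ in \eqref{eq:xc} collapses to $\int_1^{e^{a_M}}\frac{\dd z}{z^{1-p}-p}=1/p$, which is \eqref{eq:aM}.

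I expect the main obstacle to be the structural analysis—ruling out singular continuous parts and pinning down the shape of the support from the mere constancy of $\phi_\mu$—whereas the remaining computations are essentially a separation of variables and a substitution. To close the proof I would verify that the candidate $\mu=\gamma\delta_0+(1-\gamma)\nu$ built from \eqref{eq:implicit}, \eqref{eq:xc} and \eqref{eq:gamma} is actually an ESS by checking $\phi_\mu(z)\le\lambda$ off the support: for $z\in[0,x_c)$ the function $\phi_\mu$ is nondecreasing since $F_\mu$ is constant and $G$ is nondecreasing there, and for $z>\overline t_f$ it is nonincreasing since the integrand in $G$ vanishes while $e^{-aF_\mu(z)}$ decreases; gaps of $\mathrm{Esupp}(f)$ inside $[x_c,\overline t_f]$ are handled by the same monotonicity. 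Uniqueness then follows because every ESS satisfies the same implicit system whose solutions for $\gamma$, $x_c$, $F_\nu$ and $\lambda$ are unique by the discussion following Definitions~\ref{def:aM}--\ref{def:gamma}; the bound $\gamma<(1-a_M/a)\wedge p$ for $a>a_M$ then follows from monotonicity inspection of the integrand in \eqref{eq:gamma} with respect to $a$ and $p$.
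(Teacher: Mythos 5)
Your plan follows essentially the same route as the paper's proof: a structural step showing that any ESS has the form $\gamma\delta_0+(1-\gamma)\nu$ with $\nu$ absolutely continuous and support $[x_c,\overline t_f]\cap\mathrm{Esupp}(f)$ (the paper's Lemma~\ref{lem:implicit}), followed by differentiation of $\phi_\mu=\lambda$, the substitution $z=e^{aF_\mu}$, separation of variables, and the two boundary relations ($F_\nu(\overline t_f)=1$, together with $\phi_\mu(x_c)=\lambda$ in the atomless regime, resp.~$\phi_\mu(0)=\lambda$ in the atom regime) that pin down \eqref{eq:xc}, \eqref{eq:gamma} and \eqref{eq:deflambda}. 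Your atom-exclusion argument via the left limit $\lambda e^{a\mu(\{y_0\})}>\lambda$ is a clean variant of the paper's sequence argument, and your explicit check that $\phi_\mu\le\lambda$ off the support is a verification the paper leaves implicit; both are fine. One caveat on the structural step: absolute continuity of $F_\mu$ requires first extending the identity $\phi_\mu=\lambda$ from $\mathrm{supp}(\mu)$ (a priori a closed set with gaps where $f$ vanishes) to all of $[\min\mathrm{supp}(\mu),+\infty)$, which the paper does by decomposing the complement of the support into countably many open intervals and applying equation~\eqref{eq:jumpphi} on each; your sketch glosses over this, though the idea is recoverable from your own gap argument.

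The one place where your plan, as stated, would not go through is the bound $\gamma<p$ in the supercritical case. Writing $G(a,p,\gamma):=e^{-a\gamma}\int_{e^{a\gamma}}^{e^{a}}\frac{\dd z}{z^{1-p}-p}$ for the left-hand side of \eqref{eq:gamma}, the map $\gamma\mapsto G(a,p,\gamma)$ is strictly decreasing, so $\gamma<p$ is equivalent to the inequality $G(a,p,p)<1/p$, and this is \emph{not} a monotonicity statement in $a$ or $p$: the paper needs a genuine estimate, bounding $\int\bigl(\frac{1}{v^{1-p}-p}-\frac{1}{v^{1-p}}\bigr)\dd v$ from below by $\int p\,v^{-2(1-p)}\dd v$ and then maximizing an explicit function $F_p(a)$ over $a\in[0,+\infty)$ (see the details on Remark~\ref{rem:properties}). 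By contrast, the bound $\gamma<1-a_M/a$ really is the evaluation-plus-monotonicity argument you describe (the paper's decreasing function $K$ with $K(1-a_M/a)<1/p$). So for $\gamma<p$ you must either import the paper's computation or supply an equivalent one; ``monotonicity inspection of the integrand'' alone will not produce it.
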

It is worth noting that, in equation~\eqref{eq:implicit}, we can write 
 $F_\mu(x)$ instead of $\gamma+(1-\gamma)F_\nu(x)$.

Several features of the ESS $\mu$ can be read from Theorem~\ref{th:implicit}.
Beside arrivals at 0 (possible only when competition is sufficiently strong), 
individuals arrive only at possible disturbance dates. This means that if it is
certain that during $[s,t]\subsetneq[\underline{t}_f,\overline{t}_f]$, no disturbance
is possible (i.e.~$f(x)=0$ for all $x\in[s,t]$), then the probability of arrivals
in such interval is zero. This is due to competition, which advantages early birds:
there is no point in choosing to arrive in between $s$ and $t$, since there is no
risk in choosing $s$ instead.

In most real cases, one may assume that $f>0$ on $[0,\overline{t}_f]$. In that case
individuals will arrive either (1) avoiding the first part of the possible time period
(weak competition, see Figure~\ref{fig:impulse05-03}) or (2) during the whole interval
$[0,\overline{t}_f]$ without massive
arrivals at 0 (critical competition, Figure~\ref{fig:impulse03-03})
or (3) during the whole interval with massive
arrivals at 0 (supercritical competition, Figure~\ref{fig:impulse01-03}).

The first arrival time $x_c$ is strictly decreasing with respect to $p$ and to $a$ (Remark~\ref{rem:properties});
this means that strong competition and/or high probability of surviving the disturbance, push arrivals to 0.
If $p$ is fixed, competition needs to be above the threshold $a_M(p)$, in order to have arrivals at 0.
On the other hand, if competition is fixed, only weak disturbances lead to early arrivals.



As for $\gamma$, the fraction of arrivals at 0, we know that it increases with $p$.
If $p>0$, from equation~\eqref{eq:implicit}, using $\gamma(a,p) \uparrow p$ as $a \to \infty$, 
it follows that $F_\nu(x) \to 1$ for all $x \in (\underline t_f,\overline t_f]$ as $a \to \infty$. This means that, as the competition increases, 
given that an individual does not arrive at time $0$ (this probability converges monotonically to $1-p$ from above) then
the probability of arriving after $x >\underline t_f$ goes to $0$, that is, in the limit as $a \to \infty$ the arrival distribution
converges to $p \delta_0+(1-p) \delta_{\underline t_f}$
(that is, $\delta_0$ if $\underline t_f=0$). 
Note that this does not happen when $p=0$: in that case $F_\nu(x)=\frac{1+a}a \int_{x_c}^x f(y) \dd y$
which implies that $F_\nu$ converges to the cumulative distribution of the disturbance arrival time.

Associated to a given strategy $\mu$, there is the average fitness $\bar \lambda_\mu:=\int_0^{+\infty} \phi_\mu(y) \mu(\dd y)$.
In particular when $\mu$ is an ESS, then $\phi_\mu(y)=\lambda$ for all $y \in \mathrm{supp}(\mu)$ (where $\lambda=\lambda(a,p)$,
see \eqref{eq:deflambda}), hence $\bar \lambda_\mu=\lambda(a,p)$.
If $a \le a_M$ we can relate $\lambda(a,p)$ to $x_c$: by equations~\eqref{eq:xc} and \eqref{eq:deflambda} 
we have
\begin{equation}\label{eq:implicitmu}
\int_{x_c}^{\overline t_f}f(y)\dd y=\frac{1-\lambda(a,p)}{1-p}= \lambda(a,p) \int_1^{\exp(a)}\frac{\dd z}{z^{1-p}-p}.
\end{equation}
It is not difficult to prove that $(a,p) \mapsto \lambda(a,p)$ is continuous in $(0, +\infty) \times [0,1)$; moreover
$a \mapsto \lambda(a,p)$ and $p \mapsto \lambda(a,p)$ are strictly decreasing functions (see Remark~\ref{rem:lambdadecreasing}) such that 
\[
 \begin{split}
   \lim_{a\to0^+} \lambda(a,p)=1,\qquad &\lim_{a\to +\infty} \lambda(a,p)=0;\\
   \lim_{p\to0^+} \lambda(a,p)=\frac{1}{1+a},\qquad &\lim_{p\to 1^{-}} \lambda(a,p)=\exp(-a), \qquad 
   \end{split}
\]
hence $\lambda(a,0)=(1+a)^{-1} \ge \lambda(a,p) \ge \exp(-a)=\lambda(a,1)$ for all $p \in [0,1]$; moreover $\lambda(a_M(p),p)=p$. 

Thus, when $\mu$ is an ESS 
the average fitness $\bar \lambda_\mu=\lambda(a,p)$ is decreasing with respect to $p$ (when $a>0$), hence,
if we think of the fitness as a probability of survival,
the average rate of survivors is decreasing if the chance of surviving the catastrophe $p$ is increasing.
From a biological point of view, this model suggests that, in the presence of competition, if the disturbance is weaker (that is,
$p$ increases) then the ESS pushes the colony towards an early arrival on the site, increasing the negative effects of the competition
on the fitness (which overcome the positive effects of the weaker disturbance). Hence the stronger the disturbance
the higher the average fitness corresponding to the ESS (in some sense, if we think of the average fitness of the population as
its ``strength'', a strong disturbance will select a stronger population).

 \begin{figure}[H]
 \begin{minipage}{0.33\textwidth}
 \centering
  \includegraphics[width=0.85\textwidth]{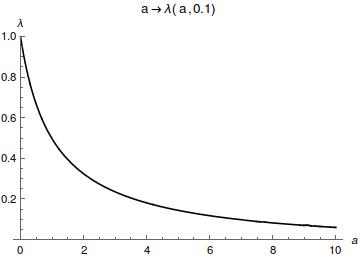}
 \end{minipage}
\begin{minipage}{0.33\textwidth}
\centering
   \includegraphics[width=0.85\textwidth]{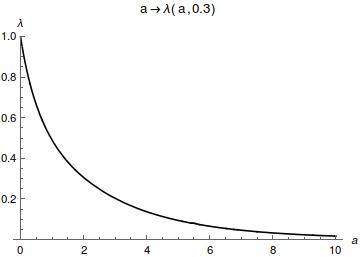}
\end{minipage}
\begin{minipage}{0.33\textwidth}
\centering
  \includegraphics[width=0.85\textwidth]{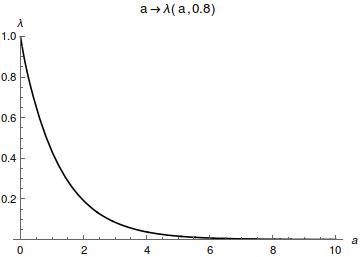}
\end{minipage} 
\\ \vskip 0.5 truecm
%
 \begin{minipage}{0.33\textwidth}
 \centering
  \includegraphics[width=0.85\textwidth]{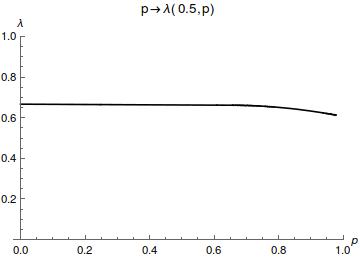}
 \end{minipage}
\begin{minipage}{0.33\textwidth}
\centering
  \includegraphics[width=0.85\textwidth]{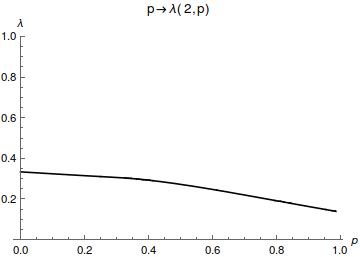}
\end{minipage}
\begin{minipage}{0.33\textwidth}
\centering
  \includegraphics[width=0.85\textwidth]{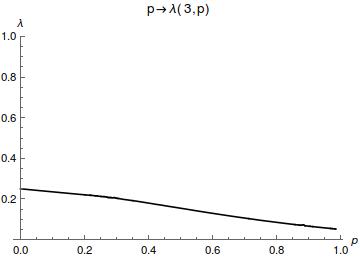}
\end{minipage}
\end{figure}

One may wonder if there are strategies $\mu$ which (given the environment $(a,p,f)$) lead to a larger $\bar \lambda_\mu$,
and whether these strategies are Evolutionary Stable Strategies.

\begin{rem}\label{rem:maximal}
The supremum of the map
 $\mu \mapsto \int \phi_\mu(y) \mu(\dd y)=\bar \lambda_\mu$ 
  is
 $(1-\exp(-a))/a$ and is attained by $\mu$ if and only
if $\inf \mathrm{supp}(\mu) \ge \sup \mathrm{Esupp}(f)=\overline t_f$ and $F_\mu$ is continuous
(and this supremum holds for any fixed $p$, see details in Section~\ref{sec:proofs}).
This means that the best choice for the population as a whole, is to start arriving after the last possible
date of disturbance. In particular no ESS can attain
this maximum value (an ESS is supported in $[0,\overline{t}_f)$).
An ESS is a strategy which is well accepted by every individual (since the fitness is constant and maximal),
while the maximizing strategies imply that some individuals
``accept'' a lower fitness for the benefit of the whole colony (note that the continuity of $F_\mu$ implies
that the population cannot choose to arrive simultaneously at $\overline{t}_f$, which would guarantee the same fitness
for everyone).
Moreover, even if the maximal fitness is attained only arriving after $\overline{t}_f$, 
it is possible to prove that we can find a sequence of strategies (not ESSs) $\{\mu_n\}_{n \ge 1}$, 
supported in $[0,\overline t_f]$ such that $\bar \lambda_{\mu_n}> (1-\exp(-a))/a-1/n$.
In Figure~\ref{fig:averagedfitness} the dashed line represents $\sup_\mu\bar\lambda_\mu$,
whereas the dotted  and the solid ones are the average fitness when the population follows the ESS,
$\lambda(a,1)$ and $\lambda(a,0)$, respectively. The filled region represents all 
possible values for $\lambda(a,p)$ with $p\in(0,1)$. Note that following the ESS a population cannot
achieve the maximal average fitness, nevertheless if $a$ is either small or large, 
then the dashed and the solid line are close, hence, if $p=0$ or at least $p$ is small,
then the average fitness is not so far from its theoretical supremum.
\end{rem}

\begin{figure}[H]
  \centering
    \includegraphics[width=0.35\textwidth]{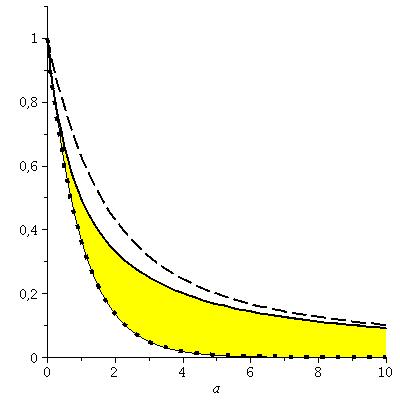}
  \caption{Maximum average fitness and the admissible region for $\lambda(a,p)$.}\label{fig:averagedfitness}
\end{figure}

In the following table, we summarize the main properties of the coefficients $a_M$, $\gamma$, $\lambda$ and $x_c$;
by $\nearrow\, x$ and $\searrow\, x$ we mean that a particular coefficient is increasing or decreasing with respect 
to the parameter $x$.
\begin{center}
\begin{tabular}{|c|c|c|} \hline
Coefficients & Dependence & Properties \\ \hline
 \tiny{Phase transition competition $a_M$} & $p$ & $\searrow\, p$\\
& & $a_M \in [0,+\infty]$ \\ \hline
\tiny{Probability of emigration at time $0$, $\gamma$} & $a$, $p$ & $\nearrow\, a$ $\ \ $ $\nearrow\, p$ \\ 
 & & $\gamma=0$ if $a \le a_M$ \\
& & $\gamma \in (0,p)$ if $a > a_M$\\ \hline
\tiny{Fitness of individuals for the ESS, $\lambda$} &  $a$, $p$ &  $\searrow\, a$ $\ \ $ $\searrow\, p$\\
 & & $\exp(-a) \le \lambda(a, \cdot) \le (1+a)^{-1}$\\ \hline
\tiny{Maximum average fitness $\bar\lambda$ for a generic strategy} & $a$ & $\bar\lambda(a,p)=(1-\exp(-a))/a$ \\ \hline
\tiny{Earliest arrival time for an ESS $x_c$} &  $a$, $p$, $f$ &  $\searrow\, a$ $\ \ $  $\searrow\, p$\\
 & & $x_c \in [\underline t_f, \overline t_f]$ if $a \le a_M$ \\
& & $x_c=0$ if $a > a_M$
\\ \hline
\end{tabular}
\end{center}

Given $(a,p,f)$, Theorem~\ref{th:implicit} gives the unique ESS $\mu$ and its average fitness $\lambda$.
We note that the map $(a,p,f) \mapsto (\lambda, \mu)$ is not injective. Indeed, 
at least when $a\le a_M(p)$, it might be that $\lambda(a,p)=\lambda(b,q)$ for some $q$ and $b$.
Even if we fix $(a,p)$, at least in the subcritical case,
different disturbance distributions may lead to the same ESS. 
Indeed, suppose that 
$a<a_M(p)$ then $x_c>0$ and $\gamma=0$. Given $\mu$ is fixed, from equation~\eqref{eq:implicit},
$f$ is uniquely determined on $[x_c, \overline t_f
]$.
Nevertheless, on $[0,x_c]$ the only constraint is $\int_0^{x_c} f(x) \dd x=(p-\lambda)/(1-p)$,
which can be satisfied by infinitely many distributions.
This means that different levels of competition, and/or of climate, can lead to the same response $\mu$ and same fitness
$\lambda$.

\subsection{Uniformly distributed disturbances}
\label{sec:uniformdist}

In this example we suppose that $f(t) = (\overline t_f-\underline t_f)^{-1}\ident_{[\, \underline t_f, \overline t_f]}(t)$, that is,
the law of the disturbance is uniformly distributed in the interval $[\underline t_f, \overline t_f]$
(we also write $f\sim\mathcal U(\underline t_f, \overline t_f)$). 
In this case some explicit computations are possible.

The coefficients $a_M$, $\lambda$, $\gamma$ do not depend of $f$; the only coefficient depending on $f$ 
is the first time of arrival $x_c$ (which is nonzero only if $a\le a_M$). If $a\le a_M$, then
\[
 x_c=\underline t_f +\alpha(a,p)(\overline t_f-\underline t_f)
\]
where $\alpha(a,p):=\frac{1-pC(a,p)}{1+(1-p)C(a,p)}$, with $C(a,p)=\int_1^{\exp(a)} \frac{\dd z}{z^{1-p}-p} \le 1/p$.
This means that, in the subcritical case, the ratio $R$ between the length of the arrival times interval and that of the disturbance
times interval, depends only on $a$ and $p$ (not on $f$); 
indeed $R:=(\overline t_f - x_c)/(\overline t_f - \underline t_f)=(1-p+C(a,p)^{-1})^{-1}$.

The cumulative distribution function $F_\mu$ can be computed using equation~\eqref{eq:implicit}.
Although when $p\neq0$ no explicit evaluations are possible,
one can see that $F_\mu$ is a rescaling of the cumulative distribution function of the ESS obtained when
the disturbance is uniformly distributed on the interval [0,1] (with the same parameters $a$ and $p$),
which we denote by $F_0^{a,p}$.
More precisely, if $F_\nu=F^{a,p}_\nu$ is the cumulative distribution function 
of the ESS when $f\sim \mathcal U(\underline t_f, \overline t_f)$, then
\begin{equation}\label{eq:rescaling}
 F_\nu^{a,p}(x)=F_0^{a,p} \Big ( \frac{x- \underline t_f \vee x_c}{\overline t_f- \underline t_f \vee x_c}
\Big )
\end{equation}
(recall that $\underline t_f \vee x_c=x_c$ if $ a< a_M$ while $\underline t_f \vee x_c=\underline t_f$ if $ a \ge a_M$). 
Using computer-aided numerical solutions, in Figures~\ref{fig:impulse02-02-05-09}--\ref{fig:impulse5-05-03-1}
we plot the cumulative distribution functions of the ESS (solid line) and  of the disturbance (dashed),
with different parameters and disturbance intervals.
In the first row, Figures~\ref{fig:impulse02-02-05-09}--\ref{fig:impulse5-05-05-09}, we take $f\sim\mathcal U(0.5,0.9)$
while in the second row, Figures~\ref{fig:impulse02-02-03-1}--\ref{fig:impulse5-05-03-1}, we have  
$f\sim\mathcal U(0.3,1)$.

In the three figures in each row, the parameters $(a,p)$ are, respectively,
$(0.2,0.2)$, $(5,0.2)$ and $(5,0.5)$. 
This implies that the first figure represents a subcritical case ($a=0.2>a_M(0.2)=3.30447$).
Note that in both rows in this case the arrivals start towards
the end of the disturbance intervals and the ratio between the length of the two intervals representing
the arrival times and the disturbance time is the same, $R=0.208552$. 

The second figure of the row is a supercritical case ($a=5>a_M(0.2)$), where a fraction $\gamma=0.10142$ of the
population arrives at time 0. In both rows the fraction $\gamma$is the same ($\gamma$ depends only on $a$ and $p$).

The third figure is again a supercritical case ($a=5>a_M(0.5)=0.941046$), where strong competition
forces a larger fraction of the population ($\gamma=0.456433$) to arrive early.
Along with the values of $a$ and $p$, in Figures~\ref{fig:impulse02-02-05-09}--\ref{fig:impulse5-05-03-1} 
we write the explicit value of the maximum average
fitness $\lambda$. 
Note that $\lambda$ increases when either $p$ decreases or $a$ increases, 
the maximum competition $a_M$.

 \begin{figure}[H]
 \begin{minipage}{0.33\textwidth}
 \centering
  \includegraphics[width=0.95\textwidth]{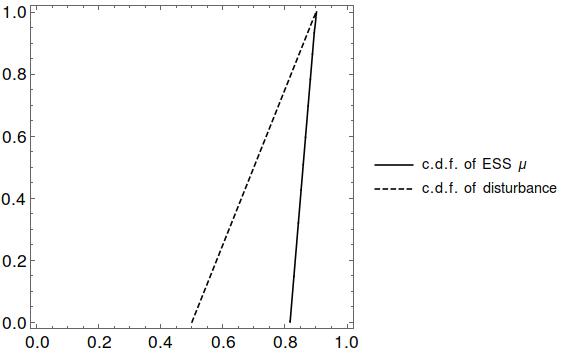}
 \caption{ \tiny{
  $a=0.2$, $p=0.2$, 
  $\lambda=0.833158$. 
  } }\label{fig:impulse02-02-05-09}
 \end{minipage}
\begin{minipage}{0.33\textwidth}
\centering
 \includegraphics[width=0.95\textwidth]{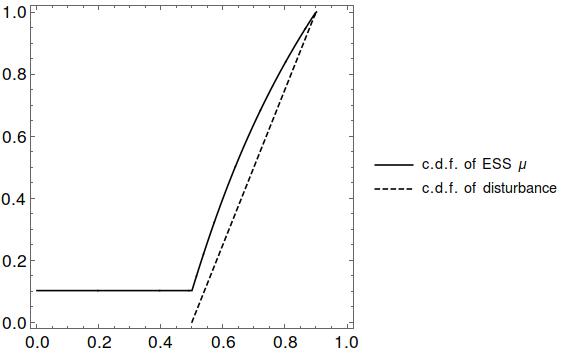}
  \caption{\tiny{$a=5$, $p=0.2$, $\lambda=0.120448$. 
  }}\label{fig:impulse5-02-05-09}
\end{minipage}
\begin{minipage}{0.33\textwidth}
\centering
 \includegraphics[width=0.95\textwidth]{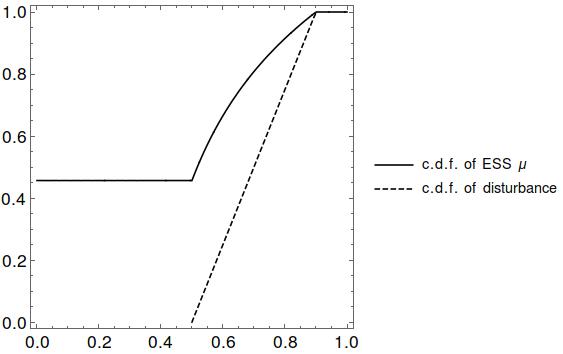}
  \caption{\tiny{$a=5$, $p=0.5$, $\lambda=0.0510315$. 
  }}\label{fig:impulse5-05-05-09}
\end{minipage} \\ \vskip 0.5 truecm
%
 \begin{minipage}{0.33\textwidth}
 \centering
  \includegraphics[width=0.95\textwidth]{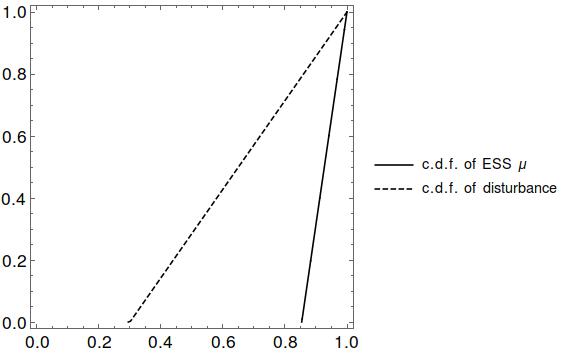}
  \caption{\tiny{$a=0.2$, $p=0.2$, $\lambda=0.833158$. 
  }}\label{fig:impulse02-02-03-1}
 \end{minipage}
\begin{minipage}{0.33\textwidth}
\centering
 \includegraphics[width=0.95\textwidth]{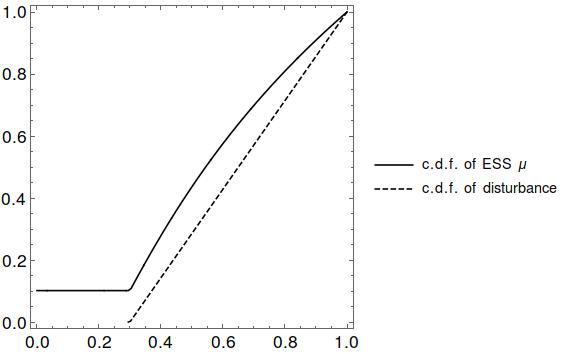}
  \caption{\tiny{$a=5$, $p=0.2$, $\lambda=0.120448$. 
  }}\label{fig:impulse5-02-03-1}
\end{minipage}
\begin{minipage}{0.33\textwidth}
\centering
 \includegraphics[width=0.95\textwidth]{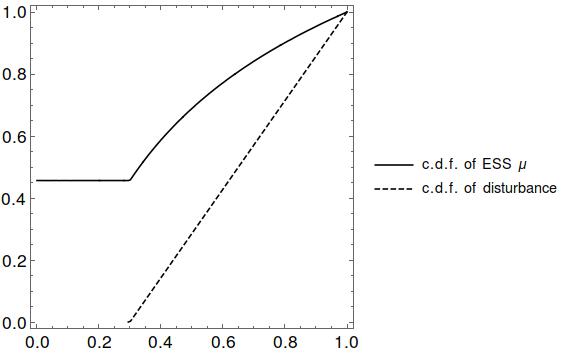}
  \caption{\tiny{$a=5$, $p=0.5$, $\lambda=0.0510315$. 
  }}\label{fig:impulse5-05-03-1}
\end{minipage}
\end{figure}


As we have seen in Figures~\ref{fig:impulse02-02-05-09}--\ref{fig:impulse5-05-03-1}, by \eqref{eq:rescaling},
it is enough to study the case $f\sim\mathcal U(0,1)$.
In 
Figures~\ref{fig:impulse01-01}-\ref{fig:impulse05-05} we plot the cumulative distribution function $F_0^{a,p}$ (solid line)
corresponding to  nine couples
$(p,a)$, together with the cumulative distribution function of the disturbance (dashed line).
In each row $p$ takes the same value ($p=0.1,\ 0.3$ and $0.5$ in the first, second and third row respectively),
while $a$ is constant along each column ($a=a_M(0.1),\ a_M(0.3)$ and $a_M(0.5)$  in the first, second and third column respectively).
In this way, figures on the diagonal represent critical cases, figures on the upper triangle are subcritical cases
and figures on the lower triangle are supercritical cases.
 \begin{figure}[H]
 \begin{minipage}{0.33\textwidth}
 \centering
  \includegraphics[width=0.95\textwidth]{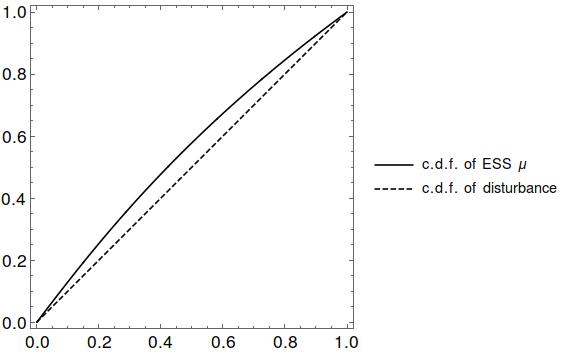}
 \caption{ \tiny{
  $a=a_M(0.1)=6.893$, $p=0.1$, $\lambda=0.0995$, $\gamma=0$.} }\label{fig:impulse01-01}
 \end{minipage}
\begin{minipage}{0.33\textwidth}
\centering
 \includegraphics[width=0.95\textwidth]{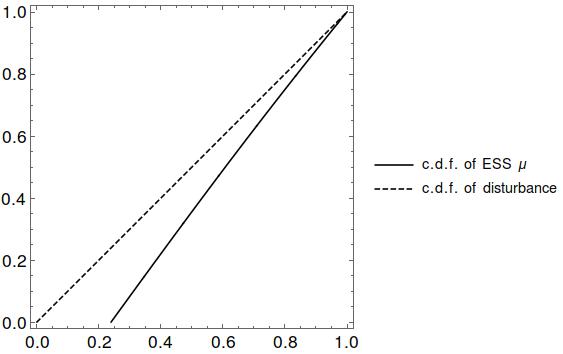}
  \caption{\tiny{$a=a_M(0.3)=2.075$, $p=0.1$, $\lambda=0.3153$, $\gamma=0$.}}\label{fig:impulse03-01}
\end{minipage}
\begin{minipage}{0.33\textwidth}
\centering
 \includegraphics[width=0.95\textwidth]{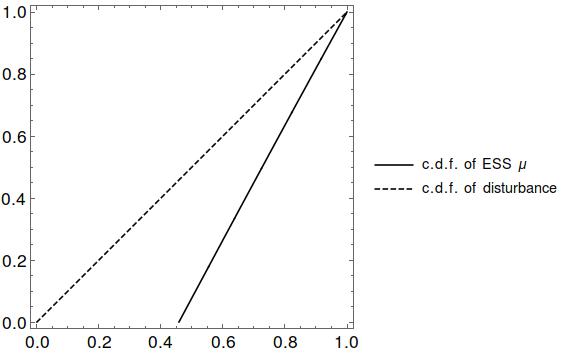}
  \caption{\tiny{$a=a_M(0.5)=0.941$, $p=0.1$,  $\lambda=0.5123$, $\gamma=0$.}}\label{fig:impulse05-01}
\end{minipage} \\ \vskip 0.5 truecm

 \begin{minipage}{0.33\textwidth}
 \centering
  \includegraphics[width=0.95\textwidth]{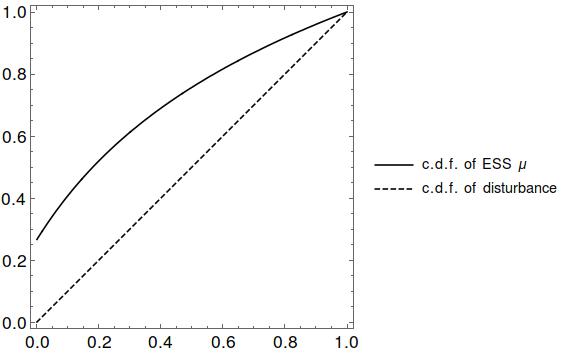}
 \caption{ \tiny{
  $a=a_M(0.1)$, $p=0.3$, $\lambda=0.0478$, $\gamma=0.2663$.} }\label{fig:impulse01-03}
 \end{minipage}
\begin{minipage}{0.33\textwidth}
\centering
 \includegraphics[width=0.95\textwidth]{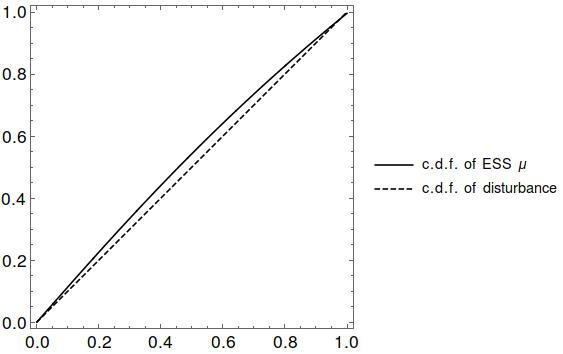}
  \caption{\tiny{$a=a_M(0.3)$, $p=0.3$, $\lambda=0.296271$, $\gamma=0$.}}\label{fig:impulse03-03}
\end{minipage}
\begin{minipage}{0.33\textwidth}
\centering
 \includegraphics[width=0.95\textwidth]{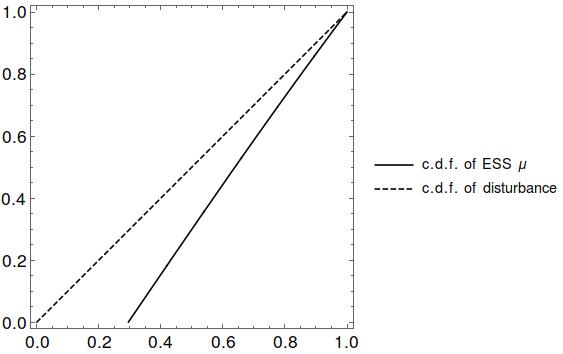}
  \caption{\tiny{$a=_M(0.5)$, $p=0.3$, $\lambda=0.5066$, $\gamma=0$.}}\label{fig:impulse05-03}
\end{minipage} \\ \vskip 0.5 truecm

 \begin{minipage}{0.33\textwidth}
 \centering
  \includegraphics[width=0.95\textwidth]{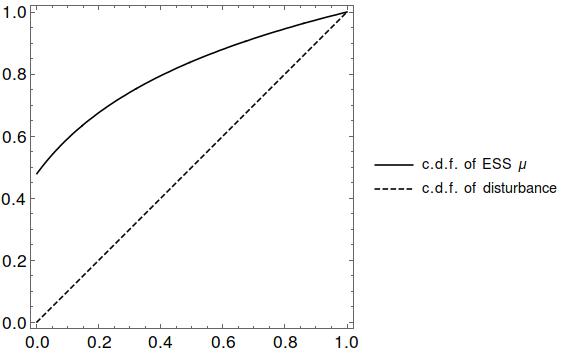}
  \caption{\tiny{$a=a_M(0.1)$, $p=0.5$,  $\lambda=0.0184$, $\gamma=0.4788$.}}\label{fig:impulse01-05}
 \end{minipage}
\begin{minipage}{0.33\textwidth}
\centering
 \includegraphics[width=0.95\textwidth]{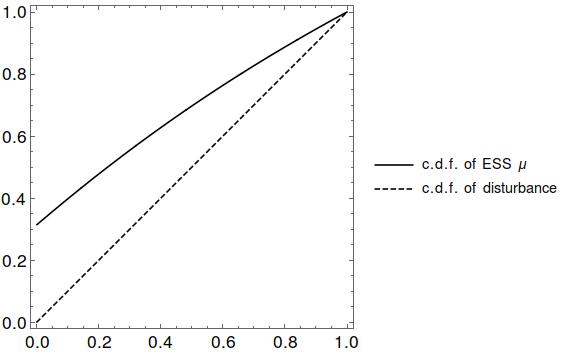}
  \caption{\tiny{$a=a_M(0.3)$, $p=0.5$, $\lambda=0.2606$, $\gamma=0.3141$.}}\label{fig:impulse03-05}
\end{minipage}
\begin{minipage}{0.33\textwidth}
\centering
 \includegraphics[width=0.95\textwidth]{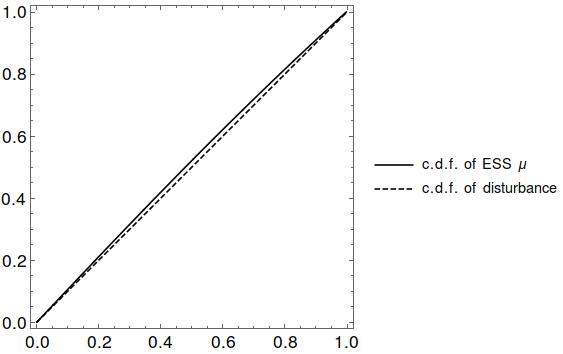}
  \caption{\tiny{$a=a_M(0.5)$, $p=0.5$, $\lambda=0.5011$, $\gamma=0$.}}\label{fig:impulse05-05}
\end{minipage}
\end{figure}

\subsection{Climate changes}
\label{sec:climatechange}


In our model the climate can be represented by the couple $(p,f)$, that is, the distribution and the strength of the
disturbance; more precisely the lower $p$ the stronger the disturbance. 
One can argue that also the competition parameter $a$ could be affected by the climate, nevertheless, in this paper,
we prefer to think of it as a characteristic of the population.

When climate changes (i.e.~ the couple $(p,f)$ changes), there are two interesting questions.
\begin{enumerate}
 \item How does the ESS change reflects a climate change? In other words, can we predict in which respect the new ESS
 will differ from the previous one? 
\item What happens if a colony keeps the same strategy of arrivals after a climate change? Does the average fitness of the
population decrease or increase?
\end{enumerate}

%
%
Some answers can be obtained if we imagine that the climate change affects just one of the parameters $p$ and $f$.

The answer to question 1, is given by Theorem~\ref{th:implicit}.
If $p$ decreases, then $\lambda(a,p)$ increases: after the population has adapted to the new climate 
the common fitness will have increased (of course supposing that the population is sufficiently large to survive the transition period).
Moreover, a decrease of $p$ may lead from an ESS with massive arrival at 0, to an ESS with arrivals after a date 
$x_c>\underline{t}_f$.
Let us discuss now what happens if $f$ changes, that is if it moves from $f_1$ to $f_2$. 
The fitness of every individual, according to an ESS adapted to $f_i$, is
$\lambda(a,p)$ which does not depend on $f_i$, thus that will not change in case of rapid adaptation (question 1).
What will change is the distribution of the arrival times and possibly its support.
In the case of uniformly distributed disturbances, the answer to question 1 is given by equation \eqref{eq:rescaling}:
a change of the interval during which disturbances occur is simply reflected by a rescaling of the ESS.
It is worth noting that an analogous result holds if we take a generic density $f$ defined on $[0,1]$ and we rescale it as
follows
\begin{equation}\label{eq:rescaling1}
 f_i(t):=f \Big ( \frac{t-\underline t_f}{\overline t_f-\underline t_f}\Big );
\end{equation}
the effect on the ESS in this general case is still the rescaling given by equation~\eqref{eq:rescaling}.


The second question is particularly interesting in the case of rapid climate changes, since the adaptation of the population
(moving from the old ESS to the new one) could require several generations; thus the changes may endanger the 
survival of the population.
A sudden change of climate may be an advantage for some
individuals (i.e.~some arrival dates) and a disadvantage for other individuals.
Even in the case of a simple anticipation of the disturbance the situation is not trivial.
Suppose that $f_1$ is supported in $[\underline t_1, \overline t_1]$
and $f_2$ is supported in $[\underline t_2, \overline t_2]$, with $\overline t_2< \overline t_1$.
Individuals arriving at $y>\overline t_2$ are sure that the disturbance is over but they might have more 
competitors still alive (for instance those arriving at $y-\varepsilon$), thus it is not clear
whether the climate change implies a larger or smaller fitness.
The following proposition gives a partial answer: let us denote by $\phi_\mu^{(i)}(y)$ the fitness of an
individual, in a population following the strategy $\mu$ (not necessarily an ESS), 
which chooses the arrival date $y$ and has to face
the disturbance which is regulated by $f_i$. If $f_2$ is a delay of the disturbance, that is, in the second scenario
disturbances strike later in the season, early birds suffer a decrease of their fitness.
The second part of the proposition tells us that, at least for uniform disturbances, if $f_2$ delays
the beginning of the disturbance but also anticipates the end of the disturbance season, then later birds
have an advantage.


\begin{pro}\label{pro:timeintervals}
 Consider a distribution $\mu$ and two densities $f_1$ and $f_2$ on the  intervals 
 $[\underline t_1, \overline t_1]$, $[\underline t_2, \overline t_2]$.
 \begin{enumerate}
 \item If $\underline t_1 < \underline t_2$ then 
 $\phi_\mu^{(1)}(y) \ge \phi_\mu^{(2)}(y)$ for all $y \in [\underline t_1, \underline t_2]$.
 \item 
 If $\mu$ is the ESS associated to $f_1$, $D_2$ is a random variable with density $f_2$ and
 $\Prob(\underline t_2<D_2<\overline t_1)$ is sufficiently small
then $\phi_\mu^{(1)}(y) \ge \phi_\mu^{(2)}(y)$ 
 for all $y \in [\underline t_1, \overline t_1]$. 
 \item If $f_1$ and $f_2$ are uniform densities and 
 $\underline t_1 \le \underline t_2$ and $\overline t_1 \ge \overline t_2$ then $\phi_\mu^{(2)}(y) \ge \phi_\mu^{(1)}(y)$
 for all $y \ge y_1:=(\underline t_1\overline t_2-\underline t_2 \overline t_1)/
 (\underline t_1+\overline t_2-\underline t_2 -\overline t_1)$ (where $y_1 \in [\underline t_2, \overline t_2]$).
 
\end{enumerate}
\end{pro}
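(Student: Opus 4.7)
The plan is to factor the positive prefactor $\exp(-aF_\mu(y))$ common to both fitness expressions in \eqref{eq:phiexplicit} and reduce the sign question to that of
\[
\Delta(y):=\int_0^y h(x)\bigl(f_2(x)-f_1(x)\bigr)\dd x,\qquad h(x):=\exp(a(1-p)F_\mu(x))-p,
\]
since $\phi_\mu^{(2)}(y)-\phi_\mu^{(1)}(y)=\exp(-aF_\mu(y))\,\Delta(y)$. The function $h$ is nondecreasing and bounded below by $1-p\ge 0$; these are the only structural features of $\mu$ that I will use in parts~1 and~3.

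For part~1, take $y\in[\underline t_1,\underline t_2]$. Since $f_2$ is supported in $[\underline t_2,\overline t_2]$, its contribution to $\Delta(y)$ vanishes, while the $f_1$ contribution is $\int_{\underline t_1}^{y}h f_1\,\dd x\ge 0$. Hence $\Delta(y)\le 0$ and the claim is immediate.

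For part~2 the natural scenario is $\underline t_1\le\underline t_2$ with $\overline t_2>\overline t_1$ (if $f_2$ is supported entirely beyond $\overline t_1$ then $f_1f_2\equiv 0$ and the analysis reduces to that of part~1). Set $\varepsilon:=\Prob(\underline t_2<D_2<\overline t_1)=\int_{\underline t_2}^{\overline t_1}f_2\,\dd x$ and split $\Delta(y)$ at $\underline t_2\wedge y$. The piece on $[0,\underline t_2\wedge y]$ equals $-\int_0^{\underline t_2\wedge y}h f_1\,\dd x$, which for $y\ge\underline t_2$ reaches the value $-c$ with $c:=\int_0^{\underline t_2}h f_1\,\dd x>0$; positivity of $c$ comes from $\underline t_1=\min\mathrm{Esupp}(f_1)$ together with $h\ge 1-p$ (the ESS hypothesis on $\mu$ enters only through the concrete form of $F_\mu$ that guarantees $c$ is strictly positive and uniformly bounded away from zero). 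The remaining piece is bounded in absolute value, uniformly for $y\in[\underline t_1,\overline t_1]$, by $M\varepsilon$, where $M:=\exp(a(1-p))-p$. Choosing $\varepsilon<c/M$ gives $\Delta(y)\le 0$ throughout $[\underline t_1,\overline t_1]$; this is the quantitative meaning of ``sufficiently small''.

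For part~3, the uniform hypothesis lets one write $\Delta(y)=G_2(y)-G_1(y)$ with
\[
G_i(y):=\frac{1}{L_i}\int_{\underline t_i}^{y\wedge\overline t_i}h(x)\,\dd x,\qquad L_i:=\overline t_i-\underline t_i\quad(L_1\ge L_2).
\]
A short algebraic check identifies $y_1$ as exactly the point where the scaled positions agree, $u_1:=(y_1-\underline t_1)/L_1=(y_1-\underline t_2)/L_2$, and shows $y_1\in[\underline t_2,\overline t_2]$. Changing variables $x=\underline t_i+uL_i$ gives $G_i(y_1)=\int_0^{u_1}h(\underline t_i+uL_i)\,\dd u$; for $u<u_1$ one has $\underline t_2+uL_2\ge\underline t_1+uL_1$ (both sides coincide at $u=u_1$), so monotonicity of $h$ yields $G_2(y_1)\ge G_1(y_1)$. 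On the interval $[y_1,\overline t_2]$, differentiation gives $(G_2-G_1)'(y)=h(y)(L_2^{-1}-L_1^{-1})\ge 0$, so the inequality $G_2\ge G_1$ propagates. The main obstacle is the regime $y>\overline t_2$: here $G_2$ is saturated while $G_1$ keeps growing on $[\overline t_2,\overline t_1]$, so a derivative argument alone cannot close it, and one would need either to invoke further structural input (e.g.\ a bound on the mass of $\mu$ outside $[\underline t_2,\overline t_2]$ that controls the tails $\int_{\underline t_1}^{\underline t_2}h\,\dd x$ and $\int_{\overline t_2}^{\overline t_1}h\,\dd x$ against the central average) or read the conclusion as an assertion on the effective range $[y_1,\overline t_2]$.
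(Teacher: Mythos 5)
Part~1 of your proposal is correct and coincides in substance with the paper's argument (which routes it through Proposition~\ref{pro:domination}, whose hypothesis is trivially satisfied for $y\le\underline t_2$ because $\int_z^yf_2(x)\,\dd x=0$ there). The genuine gap is in part~2. Your argument only covers $\underline t_1<\underline t_2$ and, tellingly, never actually invokes the hypothesis that $\mu$ is the ESS for $f_1$: with your $h(x)=\exp(a(1-p)F_\mu(x))-p$, the constant $c\ge(1-p)\int_{\underline t_1}^{\underline t_2}f_1(x)\,\dd x$ is positive for \emph{any} $\mu$ once $\underline t_1<\underline t_2$, it vanishes when $\underline t_2\le\underline t_1$ (a configuration the statement does not exclude, since the hypothesis only controls $\int_{\underline t_2}^{\overline t_1}f_2$), and your threshold $c/M$ degenerates as $\underline t_2\downarrow\underline t_1$. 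The paper's proof rests on a different, two-sided comparison in which the ESS hypothesis is essential: since $\mu$ is the ESS for $f_1$, one has the exact identity $\phi^{(1)}_\mu(y)=\lambda(a,p)\ge e^{-a}$ for $y\ge\min\mathrm{supp}(\mu)$, while $\phi^{(2)}_\mu(y)$ is bounded from above, for $y\in[\underline t_2,\overline t_1]$, by $\Prob(D_2\in[\underline t_2,\overline t_1])$ plus a term kept strictly below $e^{-a}$. The two estimates are decoupled, so the smallness threshold does not depend on the gap between $\underline t_1$ and $\underline t_2$, and the case $\underline t_2\le\underline t_1$ is absorbed. That use of the identity $\phi^{(1)}_\mu\equiv\lambda\ge e^{-a}$ is the idea your proof is missing.

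Your part~3 is correct on $[y_1,\overline t_2]$, and your worry about $y>\overline t_2$ is not a failure to find the right trick: the claim is in fact false there for a general $\mu$. The paper's one-line appeal to Proposition~\ref{pro:domination} only verifies hypothesis~\eqref{eq:int1} at $z=0$ (via $(y-\underline t_1)/(\overline t_1-\underline t_1)\le(y-\underline t_2)/(\overline t_2-\underline t_2)$), but the hypothesis must hold for every $z\in M_\mu(y)$, and for $y>\overline t_2$ and $z\in(\overline t_2,y)$ one has $\int_z^yf_2(x)\,\dd x=0<\int_z^yf_1(x)\,\dd x$. Concretely, take $\mu=\delta_{w_0}$ with $w_0\in(\overline t_2,\overline t_1)$ and $y=\overline t_1\ge y_1$: then $\int_0^yh(x)f_2(x)\,\dd x=1-p$, whereas $\int_0^yh(x)f_1(x)\,\dd x=(1-p)+(e^{a(1-p)}-1)(\overline t_1-w_0)/(\overline t_1-\underline t_1)>1-p$, so $\phi^{(1)}_\mu(\overline t_1)>\phi^{(2)}_\mu(\overline t_1)$. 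The honest conclusion is therefore the one you actually prove, namely the inequality on $[y_1,\overline t_2]$; extending it to all $y\ge y_1$ would require additional hypotheses on $\mu$ (essentially that $h$ not exceed, on $(\overline t_2,\overline t_1]$, its average over $[\underline t_2,\overline t_2]$), which neither the statement nor the paper's proof supplies.
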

\noindent Proposition~\ref{pro:timeintervals}(1) and (3) follow from a more general result
(Proposition~\ref{pro:domination}) which allows to compare $\phi_\mu^{(1)}$ and $\phi_\mu^{(2)}$ 
in more general settings. Proposition~\ref{pro:domination} is fairly technical and can be found
in Section~\ref{sec:proofs}.
Proposition~\ref{pro:timeintervals}(2) implies for instance that a consistent delay of the disturbance
(think of $\underline{t}_2>\overline{t}_1$) reduces the fitness of all individuals following the former ESS.
Even if it is not necessary that $\underline{t}_2>\overline{t}_1$, the requirement that
$\Prob(\underline t_2<D_2<\overline t_1)$ is sufficiently small, implies that $\overline{t}_2>\overline{t}_1$.

%

It is not trivial to assess whether individuals that profit from the climate change represent a small or large fraction of
the population
following what was a former ESS. More precisely, it is not always clear whether the average fitness $\bar\lambda_\mu$
increases or not. Nevertheless it is not difficult to provide some partial answers in the case of delay or anticipation
of the disturbance.
\begin{pro}\label{pro:averagefitnessclimatechange}
 Consider  two densities $f_1$ and $f_2$ on the  intervals 
 $[\underline t_1, \overline t_1]$, $[\underline t_2, \overline t_2]$ and let $\mu$ be the ESS associated to $f_1$.
 Denote by $D_2$ a random variable with density $f_2$.
 \begin{enumerate}
 \item 
 $\bar\lambda_\mu^{(2)}<\bar\lambda_\mu^{(1)}$ provided that
  $\Prob(\underline t_2<D_2<\overline t_1)$ is sufficiently small.
  \item 
 $\bar\lambda_\mu^{(2)}>\bar\lambda_\mu^{(1)}$ provided that
  $F_\mu(\overline t_2)$ is sufficiently small.
 \end{enumerate}
\end{pro}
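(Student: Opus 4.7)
The strategy is to write $\bar\lambda_\mu^{(i)}=\int\phi_\mu^{(i)}(y)\,\mu(\dd y)=\E[\bar\psi_\mu(D_i)]$, where $\bar\psi_\mu(x):=\int\psi_\mu(y|x)\,\mu(\dd y)$, and to compare the integrands on $\mathrm{supp}(\mu)\subseteq\{0\}\cup[x_c,\overline t_1]$, using $\bar\lambda_\mu^{(1)}=\lambda(a,p)$ from Theorem~\ref{th:implicit}.

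For (1), set $\varepsilon:=\Prob(\underline t_2<D_2<\overline t_1)$. On the event $\{D_2\ge\overline t_1\}$ every $y\in\mathrm{supp}(\mu)$ satisfies $y\le\overline t_1\le D_2$, so $\psi_\mu(y|D_2)=pe^{-aF_\mu(y)}$ is $D_2$-independent. Splitting $\mu=\gamma\delta_0+(1-\gamma)\nu$, using $F_\mu(0)=\gamma$ and changing variables $u=F_\mu(y)$ on $[x_c,\overline t_1]$ gives
\[
\int pe^{-aF_\mu(y)}\,\mu(\dd y)=p\Big[\gamma e^{-a\gamma}+\frac{e^{-a\gamma}-e^{-a}}{a}\Big]=:L,
\]
so that $\bar\psi_\mu(D_2)=L$ on $\{D_2\ge\overline t_1\}$. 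Combining this with the trivial bound $\bar\psi_\mu\le 1$ on the complementary event yields $\bar\lambda_\mu^{(2)}\le\varepsilon+(1-\varepsilon)L$. The key algebraic step is $L<\lambda(a,p)$: when $a>a_M$ we have $\lambda=pe^{-a\gamma}$ and the inequality reduces to $(1-e^{-a(1-\gamma)})/a<1-\gamma$, a special case of $1-e^{-x}<x$ for $x>0$ (applicable since $\gamma<p<1$ by Remark~\ref{rem:properties}(3)); when $a\le a_M$, $\gamma=0$ so $L=p(1-e^{-a})/a$, while $C(a,p)\le 1/p$ yields $\lambda\ge p>L$. Since $L<1$, choosing $\varepsilon<(\lambda-L)/(1-L)$ forces $\bar\lambda_\mu^{(2)}<\bar\lambda_\mu^{(1)}$.

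For (2), I would discard the nonnegative contribution from $\{y\le\overline t_2\}$ and bound $\phi_\mu^{(2)}$ below on $\{y>\overline t_2\}$. By \eqref{eq:phiexplicit},
\[
\phi_\mu^{(2)}(y)=e^{-aF_\mu(y)}\int_0^{\overline t_2}e^{a(1-p)F_\mu(x)}f_2(x)\,\dd x\ge e^{-aF_\mu(y)}\qquad\text{for }y>\overline t_2,
\]
because the integrand is $\ge 1$. Since $\mu$ is atomless on $(0,+\infty)$, the change of variables $u=F_\mu(y)$ produces
\[
\bar\lambda_\mu^{(2)}\ge\int_{(\overline t_2,+\infty)}e^{-aF_\mu(y)}\,\mu(\dd y)=\frac{e^{-aF_\mu(\overline t_2)}-e^{-a}}{a}.
\]
As $F_\mu(\overline t_2)\to 0$, the right-hand side tends to $(1-e^{-a})/a$, which strictly exceeds $\lambda(a,p)=\bar\lambda_\mu^{(1)}$ by Remark~\ref{rem:maximal}; continuity then gives (2) for $F_\mu(\overline t_2)$ sufficiently small.

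The main obstacle is the strictness in (1): Proposition~\ref{pro:timeintervals}(2) yields only the weak comparison, so strict separation must come from isolating the ``safe'' part $\{D_2\ge\overline t_1\}$ of the disturbance, reducing to the explicit constant $L$, and ruling out $L=\lambda(a,p)$ through a case analysis depending on whether $a\le a_M$ or $a>a_M$; the rest is a routine continuity argument.
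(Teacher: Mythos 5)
Your proof is correct, and part (1) takes a genuinely different route from the paper's. The paper disposes of (1) in one line by invoking the pointwise comparison of Proposition~\ref{pro:timeintervals} (the part asserting $\phi_\mu^{(1)}(y)\ge\phi_\mu^{(2)}(y)$ on $[\underline t_1,\overline t_1]$ when $\Prob(\underline t_2<D_2<\overline t_1)$ is small, whose proof rests on the bound $\phi_\mu^{(2)}(y)\le\Prob(D_2\in[\underline t_2,\overline t_1])+p e^{-a}<e^{-a}\le\lambda$) and then integrates against $\mu$. You instead condition on the disturbance time, compute the exact conditional average fitness $L=p\bigl[\gamma e^{-a\gamma}+(e^{-a\gamma}-e^{-a})/a\bigr]$ on the safe event $\{D_2\ge\overline t_1\}$ via the pushforward of $\mu$ under $F_\mu$, and verify $L<\lambda(a,p)$ by the two-case analysis $a\le a_M$ versus $a>a_M$; this is self-contained (it does not route through Proposition~\ref{pro:timeintervals}), handles the degenerate case $\underline t_2\ge\overline t_1$ uniformly, and yields the explicit admissible threshold $\varepsilon<(\lambda-L)/(1-L)$, whereas the paper's argument is shorter but only qualitative. (Your chain $\lambda\ge p>L$ in the subcritical case degenerates when $p=0$, but there $L=0<1/(1+a)=\lambda$ directly, so nothing is lost.) Part (2) is essentially the paper's own argument: drop the contribution of $\{y\le\overline t_2\}$, bound $\phi_\mu^{(2)}(y)\ge e^{-aF_\mu(y)}$ for $y>\overline t_2$, integrate to get $(e^{-aF_\mu(\overline t_2)}-e^{-a})/a$, and compare with $\lambda(a,p)<(1-e^{-a})/a$ from Remark~\ref{rem:maximal}; your version even states the strictness of that last inequality more carefully than the paper does.
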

Proposition~\ref{pro:averagefitnessclimatechange}(1) implies that a consistent delay of the disturbance
reduces the average fitness of the population; on the other hand,
Proposition~\ref{pro:averagefitnessclimatechange}(2) implies that the average fitness of the population
increases if the disturbance arrives so early that most of the population has not yet arrived when the 
danger is over.

Let us discuss now the consequences of a change of $p$.
In the previous sections we showed that $p \mapsto \lambda(a,p)$ is decreasing. This means that if $p$ decreases
and there is an instantaneous adaptation of the population to the new conditions (i.e.~the arrival distribution $\mu$
changes accordingly) then the fitness of each individual (hence the average fitness) increases, 
because more individuals will choose to arrive later in the season.
Intuitively, if the population does not adapt to the climate change, a decrease of $p$ should imply a killing of a larger
fraction of the population and hence a lower average fitness.
%
The following result tells us that this intuition is correct, at least for weak competition.

\begin{pro}\label{pro:changepaveragefitness}
Given a generic arrival strategy $\mu$,
recall the definition of the average fitness as $\bar\lambda_\mu=\bar\lambda_\mu(a,p,\mu,f)=\int \phi_\mu(y) \mu(\dd y)$. The function
$p \mapsto \bar\lambda_\mu=\bar\lambda_\mu(a,p,\mu,f)$ is continuous in $[0,1]$.
Moreover for all $a \in (0, 2 \log(2)]$ 
we have $\partial_p \bar\lambda_\mu(a,p,\mu) \ge 0$ for all $p \in (0,1)$ and the inequality is strict
if and only if $F_\mu(x)>0$ for some $x<\overline t_f$. 
\end{pro}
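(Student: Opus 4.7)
The proof has three ingredients: continuity by dominated convergence, a derivative formula via differentiation under the integral and Fubini, and an analytic inequality that is the real heart of the result.

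For continuity, I observe that for every fixed $y$ the integrand $\psi_\mu(y|x)$ depends continuously on $p$ and takes values in $[0,1]$, so dominated convergence gives continuity of $p\mapsto\bar\lambda_\mu(a,p,\mu,f)=\int\phi_\mu(y)\mu(dy)$ on $[0,1]$. Differentiating $\phi_\mu(y)$ under the $x$-integral (justified by a uniform bound on $|\partial_p\psi_\mu|$), integrating against $\mu(dy)$, and applying Fubini to switch the order of integration, one obtains the working formula
\begin{equation*}
\partial_p\bar\lambda_\mu \;=\; \int_0^{\overline t_f} f(x)\bigl[A(x) - aF_\mu(x)\,e^{a(1-p)F_\mu(x)}B(x)\bigr]\,dx,
\end{equation*}
where $A(x):=\int_{[0,x]} e^{-aF_\mu(y)}\mu(dy)$ and $B(x):=\int_{(x,\infty)} e^{-aF_\mu(y)}\mu(dy)$, and where $A(x)+B(x)$ is the $x$-independent constant $\int e^{-aF_\mu(y)}\mu(dy)$.

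The technical core is showing the integral on the right-hand side is nonnegative for $a\le 2\log 2$. I would first reduce to the case where $\mu$ is absolutely continuous with $F_\mu(\overline t_f)=1$, by convolving $\mu$ with a vanishing mollifier and using continuity of $A$, $B$, and $F_\mu$ in that approximation. In the absolutely continuous case, setting $u=F_\mu(x)\in[0,1]$ one computes $A(x)=(1-e^{-au})/a$ and $B(x)=(e^{-au}-e^{-a})/a$, so the integrand becomes $h(F_\mu(x))$ where
\begin{equation*}
h(u)\;:=\;\frac{1-e^{-au}}{a}\;-\;u\,e^{-apu}\bigl(1-e^{-a(1-u)}\bigr).
\end{equation*}
The remaining claim is the pointwise inequality $h(u)\ge 0$ for all $(u,p)\in[0,1]^2$ under the assumption $a\le 2\log 2$, with strict inequality on $u\in(0,1)$. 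One readily checks that $h(0)=0$, $h(1)=(1-e^{-a})/a>0$, and $h'(0)=e^{-a}>0$; the main obstacle is controlling the concave portion of $h$ near $u=0$, where the condition $e^{a/2}\le 2$ (equivalently $a\le 2\log 2$) provides just enough slack to keep $h$ from dipping below zero. This pointwise estimate is the main technical hurdle of the proof.

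For the strictness assertion, if $F_\mu(x_0)>0$ for some $x_0<\overline t_f$, then $F_\mu(x)\ge F_\mu(x_0)>0$ on $[x_0,\overline t_f)$, and $\mathrm{Esupp}(f)\cap(x_0,\overline t_f)$ has positive Lebesgue measure because $\overline t_f=\max\mathrm{Esupp}(f)$. On this set the integrand is strictly positive via the strict form of the pointwise bound applied to $u=F_\mu(x)\in(0,1)$, hence $\partial_p\bar\lambda_\mu>0$. Conversely, if $F_\mu(x)=0$ for every $x<\overline t_f$, then $\mu$ is supported on $[\overline t_f,\infty)$: for $x<\overline t_f$ we have $A(x)=0$ and $F_\mu(x)=0$, so the integrand vanishes Lebesgue-a.e.\ on $\mathrm{Esupp}(f)$ and $\partial_p\bar\lambda_\mu=0$.
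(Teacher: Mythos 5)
Your continuity argument and your derivative formula (differentiation under the integral plus Fubini, yielding $\partial_p\bar\lambda_\mu=\int_0^{\overline t_f}f(x)\bigl[A(x)-aF_\mu(x)e^{a(1-p)F_\mu(x)}B(x)\bigr]\dd x$) agree with the paper. But the proof has two genuine gaps. The first is that the pointwise inequality $h(u)\ge 0$ is never proved: you compute $h(0)$, $h(1)$, $h'(0)$ and then declare the estimate ``the main technical hurdle of the proof,'' which is precisely the step where the hypothesis $a\le 2\log 2$ must be used and where all the work lies. The inequality is true, and the clean route (essentially the paper's) is: first note $h$ is nondecreasing in $p$, so it suffices to treat $p=0$, i.e.\ $g(u):=(1-e^{-au})/a-u\bigl(1-e^{-a(1-u)}\bigr)$; then $g(0)=0$ and $g'(u)=e^{-au}+e^{-a(1-u)}-1+aue^{-a(1-u)}\ge 2e^{-a/2}-1\ge 0$ exactly when $a\le 2\log 2$. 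Without some such argument the central claim is unsupported, and the strictness assertion (which you derive from the ``strict form of the pointwise bound'') collapses with it.

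The second gap is the reduction to absolutely continuous $\mu$ by mollification: this step is not merely unjustified, it is false as stated. Take $\mu=\delta_{x_0}$ with $x_0<\overline t_f$. Then $A(x)=e^{-a}$ for $x\ge x_0$, whereas for the mollified measures $A_\eps(x)\to(1-e^{-a})/a>e^{-a}$; likewise $B$, $F_\mu$ and even $\bar\lambda_{\mu_\eps}$ itself fail to converge to the corresponding quantities for $\mu$ (one gets $\bar\lambda_{\mu_\eps}\to c(p)\,(1-e^{-a})/a$ while $\bar\lambda_\mu=c(p)\,e^{-a}$). In particular the true $A(x)$ can be strictly smaller than the closed form $(1-e^{-aF_\mu(x)})/a$ you substitute, so nonnegativity of $h(F_\mu(x))$ does not imply nonnegativity of the actual integrand when $\mu$ has atoms, and the statement is for a generic strategy $\mu$. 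The paper avoids this entirely: it keeps $A(x)$ as a genuine $\mu$-integral, only \emph{upper}-bounds $B(x)$ by $(e^{-aF_\mu(x)}-e^{-a})/a$ via the stochastic-domination Lemma~\ref{lem:stochdomin} (which holds for arbitrary $\mu$), and then proves the resulting function $\bar h_0$ is nonnegative at $0$ and nondecreasing using the elementary lower bound $\int_{(x,z]}e^{-aF_\mu(y)}\mu(\dd y)\ge e^{-aF_\mu(z)}\bigl(F_\mu(z)-F_\mu(x)\bigr)$. You should either adopt that measure-level argument or restrict and separately justify the atomic case; as written, the proof does not go through.
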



This means that, if the competition is low, then when the probability of surviving the disturbance decreases
then the fitness decreases as well. Roughly speaking, in case of low competition it is better to
arrive later (and to have more competitors) than to take a chance against the disturbance.

\section{Discussion}\label{sec:discuss}

In this paper we developed a model for the timing of life history events when a disturbance can strike a population 
of migrators and may kill 
some of the individuals that incur in it. Our model therefore 
considered the biologically realistic scenario of a ``soft'' disturbance, i.e.~an event, like extreme unseasonable weather, 
that can kill a fraction of the population, but to which some individuals survive. We developed our study by considering 
migratory birds as a model, because they are biological system where the timing of life history events has been intensively 
studied. In particular, we looked for the ESS for individuals that have to choose arrival time to their breeding grounds, 
and benefit from early arrival, as is often the case for migratory birds (\cite{cf:Crick}).
On the other hand, they may incur in a catastrophe, for instance a spell of cold weather, that will kill a fraction of 
individuals that have arrived to their breeding grounds before the catastrophe. Clearly, the choice of focusing on arrival 
to the breeding grounds is purely exemplificative, and our model applies more generally to the timing of almost any 
life-history event. For example, it applies to timing of arrival to the wintering grounds, crossing of a geographical 
barrier like a mountain range, or to timing of reproduction, rather than arrival.
From a biological point of view, the most interesting results we obtained can be summarized as follows.
First, in presence of competition, a fraction of individuals arrive early during the season, in a period when they 
can incur in a catastrophe and therefore be killed. Interestingly, the stronger the competition, the earlier birds start
arriving and may even arrive at the earliest time possible.
Remarkably, there is a threshold value for the intensity of the competition above which a fraction of individuals arrive 
extremely early, i.e.~at a time when they will certainly incur in the catastrophe. Hence, competition is able to force 
individuals to risk death if the payoff for an early arrival is sufficiently high.
Second, a strong disturbance increases the fitness of individuals (fitness is equal for all individuals that follow the ESS) 
because, under a strong disturbance, the fraction of individuals arriving at a time when they can incur in the catastrophe decreases. Hence, a strong disturbance determines a later average arrival of the population. Actual distribution of arrival dates is therefore the balance between the contrasting pressures of competition and risk of death due to the catastrophe.
Third, the ESS is not the strategy that determines the maximum average fitness of individuals in the population. 
Indeed, a larger average fitness could be obtained if some individuals would accept a reduction in their own fitness for 
favouring other individuals. This result is not surprising since individuals are predicted to behave selfishly and adopt the 
strategy that maximizes their own fitness.
We also tested our model under a climate change scenario, which is predicted to increase the frequency and strength
of extreme meteorological events (\cite{IPCC2013}), like spells of unseasonal weather that can kill migratory birds. 
We obtained two further interesting results.
Fourth, climate change, which is predicted to increase the strength of the disturbance, should determine an increase 
in the fitness when individuals are able to adapt their arrival times to the new ESS (second result above). However, the ESS 
implies a later average arrival of individuals (third result above). Differences among species or populations in the observed 
shifts in the timing of migration according to climate change may therefore be due to differences in the susceptibility of 
species or populations to extreme weather conditions, which in our model is accounted for by the strength of the disturbance. 
Since climate change is determining on the one side a general advancement of the timing of spring events 
(\cite{cf:Menzel, cf:Schwartz}), but on the other side it also increases the frequency of cold spells in 
spring (\cite{cf:Gu}), differences in the response of bird species or populations to climate change should be investigated 
also in respect to their susceptibility to unseasonable weather.
Fifth, if the population is unable to adjust arrival times and continues following the previous timing, which is no more an ESS, 
the fitness of individuals declines (in many scenarios). It has been hypothesized that many migratory bird species, 
particularly long distance migrants, may be less able than short distance ones to advance their arrival to the breeding grounds 
because they are constrained by the timing of other life-history events (\cite{cf:Moller2010}). Consequently, they are forced to 
follow an arrival strategy that differs from the current ESS, and should suffer a reduction in fitness.
Our model therefore gives an explanation of the possible mechanisms linking response to climate change and population trends 
and explaining why bird populations that did not show a response to climate change are declining (\cite{cf:Moller2008}).
There is currently debate among biologists on whether the observed changes in arrival dates of migratory birds can be attributed 
more to micro-evolutionary processes or to phenotypic plasticity (\cite{cf:Charmentier}). If the response to climate change 
is due to phenotypic plastic response of individuals, then probably adaptation will be fast enough to keep the pace of climate 
change. In contrast, if timing of life history events is genetically controlled, a longer time may be needed for the new ESS 
to fix in the population, and in the meanwhile individual fitness will be reduced.
In summary, the model we developed may contribute to our understanding of the processes determining the 
timing of life history events under the biologically realistic scenario of a catastrophe killing only a fraction of 
the individuals that incur in it. Our model explains how competition can induce a fraction of the population to arrive 
very early, despite facing a higher risk of death, as it is documented in several species (\cite{cf:Newton}). 
Moreover, our model also investigated the effect of climate change on the timing of life history events, and 
demonstrated that fitness should decline in a scenario of increased probability of catastrophe if the population is not 
able to adapt to the new climatic conditions.

\section{Proofs}\label{sec:proofs}

In this section one can find all the proofs of our results and some details about the remarks of the previous
sections. We start with a lemma and its corollary. 

\begin{lem}\label{lem:exponential}
Let $\{a_n\}_{n \ge 0}$, $\{b_n\}_{n \ge 0}$ and $\{k_n\}_{n \ge 0}$ be such that $a_n, b_n \ge 0$ for all 
$n \ge 0$, $b_n >0$ for some $n \ge 0$, $k_{n+1} \ge k_n$ for all $n \ge 0$ and $a_n= b_n k_n$ 
for all $n \ge 0$. Define $n_0 :=\min \{n \ge 0 \colon b_n>0\}$ and
\[
 f(t) :=
\begin{cases}
 \displaystyle \frac{\sum_{n \ge 0}a_n t^n}{\sum_{n \ge 0}b_n t^n} & t>0 \\
&\\
\displaystyle \frac{a_{n_0}}{b_{n_0}} & t=0.
\end{cases}
\]
Then $f$ is a nondecreasing function on $[0, +\infty)$. Moreover $f$ is strictly increasing on $[0, +\infty)$ if and only if there exists 
$m,n$ such that $k_{m}>k_n$ and $b_m,b_n >0$.
\end{lem}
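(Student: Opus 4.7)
The natural approach is to recognize $f(t)$ as a weighted average. For $t>0$, set $w_n(t):=b_n t^n/\bigl(\sum_m b_m t^m\bigr)$; then $\{w_n(t)\}$ is a probability distribution on the indices $n$ with $b_n>0$, and $f(t)=\sum_n k_n\,w_n(t)$. The intuition is that as $t$ increases, $w_n(t)$ gets tilted toward larger indices, so the expected value of the nondecreasing sequence $k_n$ should grow. I will make this rigorous via differentiation and a symmetrization trick, which also gives the strictness criterion cleanly.

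First I would handle continuity at $0$: factor $t^{n_0}$ out of numerator and denominator to get
\[
f(t)=\frac{a_{n_0}+\sum_{n>n_0}a_n t^{n-n_0}}{b_{n_0}+\sum_{n>n_0}b_n t^{n-n_0}}\;\;\xrightarrow[t\to 0^+]{}\;\;\frac{a_{n_0}}{b_{n_0}}=k_{n_0}=f(0),
\]
so $f$ is continuous on $[0,\infty)$. For monotonicity on $(0,\infty)$, apply the quotient rule and (using $a_n=b_n k_n$) expand the numerator of $f'(t)\bigl(\sum_n b_n t^n\bigr)^2$ as a double sum:
\[
N(t):=\sum_{m,n\ge 0}(n-m)\,b_n b_m\, k_n\, t^{m+n-1}.
\]
Swapping the roles of $m$ and $n$ and averaging yields the symmetrized form
\[
N(t)=\frac12\sum_{m,n\ge 0}(n-m)(k_n-k_m)\,b_n b_m\,t^{m+n-1}.
\]
Because $k_n$ is nondecreasing, $(n-m)(k_n-k_m)\ge 0$ for every pair $(m,n)$, so every term is nonnegative; hence $N(t)\ge 0$ and $f'(t)\ge 0$ on $(0,\infty)$. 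Combined with continuity at $0$, this shows $f$ is nondecreasing on $[0,\infty)$.

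For the strictness characterization, suppose first that indices $m<n$ exist with $b_m,b_n>0$ and $k_m<k_n$. Then the corresponding term $(n-m)(k_n-k_m)b_nb_m t^{m+n-1}$ is strictly positive for every $t>0$, so $N(t)>0$ and $f$ is strictly increasing on $(0,\infty)$; strict increase at $t=0$ follows from
\[
f(t)-k_{n_0}=\frac{\sum_{n\ge n_0}b_n(k_n-k_{n_0})\,t^n}{\sum_{n\ge n_0}b_n t^n},
\]
whose numerator is a nonnegative power series containing at least one strictly positive coefficient (either the index with $k_n>k_{n_0}$, or a higher index reached via the monotonicity of $k$), hence is $>0$ for all $t>0$. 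Conversely, if $k_n=k_{n_0}$ for every $n$ with $b_n>0$, then $f\equiv k_{n_0}$ is constant. This completes the plan.

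The only substantive obstacle is the symmetrization step, which is routine once one writes the quotient-rule numerator as a double sum; the rest is bookkeeping about when a nonnegative power series vanishes identically.
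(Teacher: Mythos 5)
Your proof is correct and follows essentially the same route as the paper: differentiate the quotient, symmetrize the resulting double sum so that each term becomes $(n-m)(k_n-k_m)b_nb_mt^{n+m-1}\ge 0$, and read off the strictness criterion from whether any such term is positive. The only cosmetic difference is that you symmetrize the full double sum directly while the paper first collects terms by total degree and symmetrizes within each coefficient; your handling of the point $t=0$ via the explicit formula for $f(t)-k_{n_0}$ is if anything slightly more careful than the paper's.
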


\begin{proof}
 Note that $f$ is continuous on $[0,+\infty)$ and differentiable on $(0,+\infty)$.
We compute the derivative on  $(0,+\infty)$ as
\[
 \begin{split}
f^\prime(t) &= \frac{\sum_{n \ge 1}n a_n t^{n-1} \cdot \sum_{n \ge 0}b_n t^n -\sum_{n \ge 0}a_n t^n \cdot \sum_{n \ge 1}n b_n t^{n-1}}{\Big (\sum_{n \ge 0}b_n t^n \Big )^2}\\
&=\frac{\sum_{n \ge 0} \Big [\sum_{i = 0}^n (i+1)(a_{i+1}b_{n-i}-a_{i} b_{i+1}) \Big ] t^n}{\Big (\sum_{n \ge 0}b_n t^n \Big )^2} 
=\frac{\sum_{n \ge 0} \Big [\sum_{i = 0}^n (i+1)b_{i+1}b_{n-i}(k_{i+1}-k_{n-i}) \Big ] t^n}{\Big (\sum_{n \ge 0}b_n t^n \Big )^2}\\
&=\frac{\sum_{n \ge 0} \Big [(k_{n+1}-k_0)(n+1)b_{n+1}b_0+ \sum_{i = 0}^{n-1} (i+1)b_{i+1}b_{n-i}(k_{i+1}-k_{n-i}) \Big ] t^n}{\Big (\sum_{n \ge 0}b_n t^n \Big )^2}\\
 \end{split}
\]
Now $(k_{n+1}-k_0)(n+1)b_{n+1}b_0 \ge 0$ and, by using $j=n-i-1$,
\[
 \begin{split}
  \sum_{i = 0}^{n-1} (i+1)&b_{i+1}b_{n-i}(k_{i+1}-k_{n-i}) \\
&= \frac12 \Big [ \sum_{i = 0}^{n-1} (i+1)b_{i+1}b_{n-i}(k_{i+1}-k_{n-i}) + 
\sum_{j = 0}^{n-1} (n-j)b_{n-j}b_{j-1}(k_{n-j}-k_{j-1}) \Big ]\\
&=\frac12 \Big [ \sum_{i = 0}^{n-1} (i+1)b_{i+1}b_{n-i}(k_{i+1}-k_{n-i}) -\sum_{j = 0}^{n-1} (n-j)b_{j-1}b_{n-j}(k_{j-1}-k_{n-j}) \Big ]\\
&=\frac12 \sum_{i = 0}^{n-1} b_{i+1}b_{n-i}(k_{i+1}-k_{n-i})(i+1-(n-i))= \frac12 \sum_{i = 0}^{n-1} b_{i+1}b_{n-i}(k_{i+1}-k_{n-i})(2i+1-n).\\
 \end{split}
 \]
Since $(k_{i+1}-k_{n-i})(2i+1-n) \ge 0$ for all $n,i$ such that $n>i\ge0$ we have that $f^\prime(t) \ge 0$ for all $t$, hence $f$ is non-decreasing.

Moreover, if $k_{m}>k_n$ and $b_m,b_n >0$ then $m>n$ and $\sum_{i = 0}^{n+m-2} b_{i+1}b_{n-i}(k_{i+1}-k_{n-i})(2i+1-n) \ge b_m b_n (k_m-k_n)(m-n)>0$
(just take $i=m-1$). This implies $f^\prime(t)>0$ for all $t \in (0,+\infty)$, whence $f$ is strictly increasing on $[0,+\infty)$.
On the other hand if $k_n=k_m$ for all $n,m$ such that $b_mb_n>0$, clearly $f(t)=k_0$ for all $t \in [0,+\infty)$.
\end{proof}

\begin{cor}\label{cor:exponential}
 Suppose that $\bar a > a >0$ and $ p \le 1$. The the function $f(t):=(e^{\bar a t}-p)/(e^{at}-p)$ is strictly increasing
in $[0, +\infty)$.
\end{cor}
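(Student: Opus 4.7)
The plan is to apply Lemma~\ref{lem:exponential} directly with a judicious choice of the sequences $\{a_n\}$, $\{b_n\}$, $\{k_n\}$ coming from the Taylor expansions of $e^{\bar a t}$ and $e^{a t}$.

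First, I would expand
\[
  e^{a t}-p = (1-p) + \sum_{n \ge 1}\frac{a^n}{n!}\,t^n,\qquad
  e^{\bar a t}-p = (1-p) + \sum_{n \ge 1}\frac{\bar a^n}{n!}\,t^n,
\]
so that $f(t)$ is displayed exactly in the form of ratio of power series treated by Lemma~\ref{lem:exponential} with $b_0=a_0=1-p$, $b_n=a^n/n!$ and $a_n=\bar a^n/n!$ for $n\ge 1$.

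Next, I would define $k_n$ via $a_n=b_n k_n$: for $n\ge 1$ this forces $k_n=(\bar a/a)^n$. When $p<1$ we have $b_0=1-p>0$ and must set $k_0=1$; when $p=1$ we have $b_0=0$ and $k_0$ is free (so we simply pick $k_0=1$). In either case, since $\bar a/a>1$, the sequence $k_n$ is nondecreasing: indeed $k_0=1\le \bar a/a=k_1$ and $k_{n+1}/k_n=\bar a/a>1$ for $n\ge 1$. Moreover $k_2=(\bar a/a)^2 > \bar a/a = k_1$ with $b_1,b_2>0$, which provides the strict-increase pair required by the second part of the lemma.

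Finally, I would invoke Lemma~\ref{lem:exponential} to conclude that $f$ is strictly increasing on $[0,+\infty)$. The only mildly delicate point is the bookkeeping in the case $p=1$, where $b_0=0$ and $f(0)$ is defined by the lemma as $a_{n_0}/b_{n_0}=\bar a/a$ (with $n_0=1$); otherwise the argument is a straightforward application and no separate calculation of $f'$ is needed. No real obstacle arises: the whole work has been done once the coefficients are correctly identified and the monotonicity of $k_n=(\bar a/a)^n$ is observed.
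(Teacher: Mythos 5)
Your proposal is correct and is essentially identical to the paper's proof, which applies Lemma~\ref{lem:exponential} with exactly the same choices $a_0=b_0=1-p$, $a_n=\bar a^n/n!$, $b_n=a^n/n!$ for $n\ge1$ and $k_n=(\bar a/a)^n$. Your extra care with the $p=1$ case and the explicit identification of the pair $(m,n)=(2,1)$ guaranteeing strict monotonicity are welcome details that the paper leaves implicit.
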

\begin{proof}
 Apply Lemma~\ref{lem:exponential} using $a_0=b_0=1-p$, $a_n=\bar a^n/n!$ for all $n \ge1$, $b_n=a^n/n!$ for all $n \ge 1$ and $k_n=(\bar a/a)^n$.
\end{proof}

The following is a brief remark which proves some of the properties of the functions $\gamma$ and $\lambda$
(the others are straightforward).

\begin{proof}[Details on Remark~\ref{rem:properties}]
Most of the results about the functions $a_M$, $\gamma$, $\lambda$ and $x_c$ 
follow easily by checking the monotonicity and the the continuity (in each variable separately) of the
l.h.s.~and r.h.s.~of the defining equations.
%
%
%
%
%
%
%
%
%
%
%
%
%
%
We highlight just the main details.
\begin{enumerate}
 \item Since the l.h.s.~of equation~\eqref{eq:aM} is strictly decreasing, continuous 
with respect to $p$ and strictly increasing, continuous with respect to $a_M$ and
since the r.h.s.~is strictly decreasing, continuous with respect to $p$ we have that $p \mapsto a_M(p)$ is strictly decreasing and continuous.
As for the limit we note that, for every fixed $\beta \in (0,+\infty)$,
\[
 p \int_1^{\exp(\beta)} \frac{\dd z}{z^{1-p}-p} \to
\begin{cases}
0 & \textrm{if } p \to 0^+\\
+\infty & \textrm{if } p \to 1^- 
\end{cases}
\]
since $1/(z^{1-p}-p) \downarrow 1/z$ pointwise in the first case and the Bounded Convergence Theorem applies, while 
$1/(z^{1-p}-p) \uparrow +\infty$ pointwise in the second case and the Monotone Convergence Theorem applies. The limits follow
easily by standard arguments.
\item
Since the r.h.s.~of equation~\eqref{eq:xc} is strictly increasing and continuous with respect to $p$ and with respect to $a$ and
since the l.h.s.~is continuous and nonincreasing with respect to $x_c$ we have that the maps $a \mapsto x_c$ and $p \mapsto x_c$
are strictly increasing. 
As for the continuity, note that, by definition,
for every $\beta >x_c(a_0,p_0,f)$ we have
$\int_{\beta}^{\overline t_f}f(x)\dd x <\int_{x_c(a_0,p_0,f)}^{\overline t_f}f(x)\dd x$ hence
\[
\begin{split}
 &\lim_{a \to a_0^-} \Big ( \int_{\beta}^{\overline t_f}f(x)\dd x- \Big( 1-p_0+\Big( \int_1^{\exp(a)}\frac{\dd z}{z^{1-p_0}-p_0} \Big)^{-1}\Big)^{-1} \Big )<0\\
 &\lim_{p \to p_0^-} \Big ( \int_{\beta}^{\overline t_f}f(x)\dd x- \Big( 1-p+\Big( \int_1^{\exp(a_0)}\frac{\dd z}{z^{1-p}-p} \Big)^{-1}\Big)^{-1} \Big )<0.\\
\end{split}
\]
These inequalities imply that, eventually, the maximal solution $x_c(a,p_0,f) \in (x_c(a_0,p_0,f),\beta)$ (resp.~$x_c(a_0,p,f) \in (x_c(a_0,p_0,f),\beta)$).
The limits from the right can be treated analogously by carefully dealing with the intervals 
where
$\beta \mapsto \int_{\beta}^{\overline t_f}f(x)\dd x$ is constant.

Let us prove the limits in equation~\eqref{eq:xclimits}.
By using the monotonicity of $p\mapsto x_c$, we have that $x_c(a,p,f) \le x_c(a,0,f)$. Since the r.h.s.~of equation~\eqref{eq:xc}
equals $a/(1+a)<1$ when $p=0$ we get that $x_c(a,0,f)<\overline t_f$.
In order to compute the second limit 
we observe that the r.h.s.~of equation~\eqref{eq:xc}
tends to $0$ as $a \to 0^+$.
As for the second line in equation~\eqref{eq:xclimits}, note that for every $\beta> \underline t_f $  
we have that $\int_{\beta}^{\overline t_f}f(x)\dd x<1$ hence
 $\int_{\beta}^{\overline t_f}f(x)\dd x-\Big( 1-p+\Big( \int_1^{\exp(a)}\frac{\dd z}{z^{1-p}-p} \Big)^{-1}\Big)^{-1} \to
 \int_{\beta}^{\overline t_f}f(x)\dd x-1<0$ eventually as $p \to p_M(a)^-$. 
\item
The continuity follows from the continuity of the r.h.s.~of equation~\eqref{eq:gamma} with respect to $p$, from the continuity
of the l.h.s.~with respect to $p$, $a$ and $\gamma$ and from the monotonicity with respect to $\gamma$.

Let us prove that $\lim_{a \to \infty} \gamma(a,p)=p$.
%
We define $G(a,p,\gamma):=\exp(-a\gamma)\int_{\exp(a\gamma)}^{\exp(a)}\frac{\dd v}{v^{1-p}-p}$ and
we can assume $p \in (0,1)$.
Clearly, $\int_{\exp(a\gamma)}^{\exp(a)}\frac{\dd v}{v^{1-p}-p} \sim \int_{\exp(a\gamma)}^{\exp(a)}\frac{\dd v}{v^{1-p}}=
(\exp(ap)-\exp(a\gamma p))/p$ as $a \to +\infty$. Hence for every fixed $\gamma$ and $p$ and for every $\varepsilon>0$
we have
\[
 (1+\varepsilon)(\exp(a(p-\gamma))-\exp(a\gamma(p-1)))/p > G(a,p,\gamma)> (\exp(a(p-\gamma))-\exp(a\gamma(p-1)))/p
\]
eventually as $a \to +\infty$.
Hence 
\[
 \lim_{a \to +\infty} G(a,p,\gamma)
 =
 \begin{cases}
+\infty & \textrm{if }\gamma<p \\
0 & \textrm{if }\gamma>p \\
 \end{cases}
\]
thus for every $\bar \gamma$,$ \widetilde \gamma$ satisfying $\bar \gamma<p<\widetilde \gamma$, eventually as $a \to +\infty$,
the solution $\gamma(a,p)$ to equation~\eqref{eq:gamma}, that is $G(a,p,\gamma)-1/p=0$, satisfies $\gamma(a,p)\in (\bar \gamma, \widetilde \gamma)$. This implies
$\lim_{a \to \infty} \gamma(a,p)=p$.

In order to prove that  $\gamma(a,p)<p$ for all $a\ge 0$, $p \in (0,1)$, 
it is enough to show that $G(a,p,p)<1/p$ for all $a\ge 0$, $p \in (0,1)$. Indeed, in that case, the 
 solution $\gamma(a,p)$ of $G(a,p,\gamma)-1/p=0$  satisfies 
 $\gamma(a,p)\in (0,p)$ (note that $\gamma \mapsto G(a,p,\gamma)$ is strictly decreasing for all $a>0$).
Observe that
\[
 \begin{split}
G(a,p,\gamma)&=\frac{\exp(a(p-\gamma))-\exp(-a\gamma(1-p))}{p}-\exp(-a\gamma) 
\int_{\exp(a\gamma)}^{\exp(a)}\Big ( \frac{1}{v^{1-p}-p}-\frac{1}{v^{1-p}} \Big) \dd v \\
 &< \frac{\exp(a(p-\gamma))-\exp(-a\gamma(1-p))}{p}-
 \exp(-a\gamma) 
 \int_{\exp(a\gamma)}^{\exp(a)} \frac{p}{v^{2(1-p)}}\dd v\\
 &= \frac{\exp(a(p-\gamma))-\exp(-a\gamma(1-p))}{p}-\Delta(a,p,\gamma)
 \end{split}
 \]
 where
 \[
 \Delta(a,p,\gamma):=
 \begin{cases}
e^{-a\gamma}a(1-\gamma)/2 & \textrm{if }p=1/2\\
 e^{-a\gamma}\frac{p}{2p-1} \big (
   e^{a(2p-1)}-e^{a \gamma(2p-1)} \big )
   & \textrm{if }p\not =1/2.\\
 \end{cases}
  \]
When $\gamma=p$ we have $G(a,p,p)=1/p- \big ( e^{-ap(1-p)}/p-\Delta(a,p,p) \big )$, hence
it is enough to prove that $e^{-ap(1-p)}/p-\Delta(a,p,p)>0$, that is,
$F_p(a):=pe^{ap(1-p)}\Delta(a,p,p)<1$.
The first case is $\gamma=p=1/2$; thus, $F_p(a)=e^{-a/4}a/8$ which attains its maximum
value (in $[0, +\infty)$) at $a=4$ and $F_p(2)=e^{-1}/2<1$.
The second case is $\gamma=p \not =1/2$ where
$F_p(a)=p^2 \big (e^{-a(1-p)^2}-e^{-ap(1-p)} \big )/(2p-1)$.
Note that $F_p(0)=0$, $F_p(a)>0$ for all $a>0$ and $F_p(a)\to 0$ as $a \to +\infty$. 
Hence $F_p$ admits a global maximum in $[0,+\infty)$, which must be a stationary point
since $F_p$ is differentiable.
By taking the derivative 
$F_p^\prime (a)=p^2 \big (p(1-p)e^{-ap(1-p)}-(1-p)^2e^{-a(1-p)^2} \big )/(2p-1)$ we have that a stationary point
$\bar a$ must satisfy $e^{-\bar ap(1-p)}-(1-p)e^{-\bar a(1-p)^2}/p=0$.
Hence $F_p(\bar a)= p e^{-\bar a(1-p)^2}<1$. This proves that $\gamma(a,p)<p$ for all $p \in (0,1)$
and $a>0$.

Now we prove that $a \mapsto \gamma(a,p)$ is strictly increasing in $[a_M(p),+\infty)$
for all $p \in (0,1)$. 
Since $\gamma \mapsto G(a,p,\gamma)$ is strictly decreasing for every fixed $a>0$, it is enough to prove that
$a \mapsto G(a,p,\gamma)$ is strictly increasing (for fixed $\gamma \le p<1$)
to obtain that $a \mapsto \gamma(a,p)$ is strictly increasing where it is defined by equation~\eqref{eq:gamma}
(by standard arguments for implicitly defined functions).  
By continuity it is enough to prove that $G(\bar a,p,\gamma)>G(a,p,\gamma)$ where $\bar a>a>0$
(the case $a=0$ would follow easily).

To this aim, let us replace the variable in the integral defining $G$ in the following way:
$z=\alpha v+\beta$ where $\alpha=(e^{\bar a}-e^{\bar a \gamma})/(e^{a}-e^{a \gamma})$ and
$\beta = - (e^{\bar a +a\gamma}-e^{a+\bar a \gamma})/(e^{a}-e^{a \gamma})$ (note that $\beta<0$ since $\gamma <1$).
This implies that when $v=e^{a\gamma}$ then $z=e^{\bar a \gamma}$ and when $v=e^{a}$ then $z=e^{\bar a}$.
Whence
\[
 \begin{split}
G(a,p,\gamma)&=e^{-\bar a \gamma} \frac{e^{\gamma(\bar a-a)}}{\alpha} \int_{e^{\bar a \gamma}}^{e^{\bar a}} 
\frac{\dd z}{(\frac{z-\beta}{\alpha})^{1-p}-p}\\
&=  e^{-\bar a \gamma}  \int_{e^{\bar a \gamma}}^{e^{\bar a}} 
\frac{1}{z^{1-p}-p} \Big (\frac{z^{1-p}-p}{(\frac{z-\beta}{\alpha})^{1-p}-p}\cdot \frac{e^{\gamma(\bar a-a)}}{\alpha}
\Big ) \dd z =(*).
\end{split}
\]
Observe that $(e^{\gamma(\bar a-a)})/\alpha=(e^{a(1-\gamma)}-1)/(e^{\bar a(1-\gamma)}-1)$.
Moreover
\[
 \frac{z^{1-p}-p}{(\frac{z-\beta}{\alpha})^{1-p}-p}=\alpha^{1-p} \frac{(\frac{z}{\alpha})^{1-p}-p/\alpha^{1-p}}{(\frac{z+|\beta|}{\alpha})^{1-p}-p}
=\alpha^{1-p} \Big (1-\frac{(\frac{z+|\beta|}{\alpha})^{1-p}-(\frac{z}{\alpha})^{1-p}-p(1-1/\alpha^{1-p})}{(\frac{z+|\beta|}{\alpha})^{1-p}-p}
\Big )
\]
which is strictly increasing with respect to $z$ since $z/\alpha<(z+|\beta|)/\alpha$. Hence $((z+|\beta|)/\alpha)^{1-p}-(z/\alpha)^{1-p}$
is nonincreasing (since $1-p \le 1$) and $(z+|\beta|)/\alpha)^{1-p}-p$ is strictly increasing.
This implies that, for all $z \in [e^{\bar a \gamma}, e^{\bar a})$,
 \[
\begin{split}
  \frac{z^{1-p}-p}{(\frac{z-\beta}{\alpha})^{1-p}-p}\cdot \frac{e^{\gamma(\bar a-a)}}{\alpha} &<
\frac{e^{\bar a (1-p)}-p}{e^{a(1-p)}-p}\cdot \frac{e^{a(1-\gamma)}-1}{e^{\bar a(1-\gamma)}-1}
\le \frac{e^{\bar a (1-p)}-1}{e^{a(1-p)}-1}\cdot \frac{e^{a(1-\gamma)}-1}{e^{\bar a(1-\gamma)}-1} \\
&\le
\frac{e^{\bar a (1-p)}-1}{e^{a(1-p)}-1} \cdot \frac{e^{a(1-p)}-1}{e^{\bar a(1-p)}-1}=1
 \end{split}
\]
where in the second inequality we used $(x-p)/(y-p) \le (x-1)/(y-1)$ for all $x \ge y>1\ge p$, while in the
last inequality we applied Corollary~\ref{cor:exponential} (since $\gamma \le p$). Finally, this yields
\[
 (*) < e^{-\bar a \gamma}  \int_{e^{\bar a \gamma}}^{e^{\bar a}} 
\frac{\dd z}{z^{1-p}-p}=G(\bar a,p,\gamma).
\]
\end{enumerate}
\end{proof}

\begin{rem}\label{rem:lambdadecreasing}
The continuity of the function $\lambda(a,p)$ is easy.
In the interval $[0,a_M(p)]$ the function $a \mapsto \lambda(a,p)$ is strictly decreasing since the integral
in the r.h.s.~of equation~\eqref{eq:deflambda} is strictly increasing for all $p \in (0,1)$. If
$a \in [a_M(p), +\infty)$ then, using equation~\eqref{eq:gamma}, we have that $\lambda(a,p)$ is a solution to
\[
 \lambda \int_{p/\lambda}^{\exp(a)}\frac{\dd z}{z^{1-p}-p}=1
\]
and, since the l.h.s.~of this equation is strictly increasing with respect to $a$ and $\lambda$, standard arguments
imply that $a \mapsto \lambda(a,p)$ is strictly decreasing.

We show now that $p \mapsto \lambda(a,p)$ is strictly decreasing. 
Let us start with the first expression in equation~\eqref{eq:deflambda}. In the following
equation it is easy to show that the derivative with respect to $p$ and the integral with respect to $z$ commute and
\[
\begin{split}
\partial_p \frac{1}{\lambda}&=  -\int_1^{\exp(a)}\frac{\dd z}{z^{1-p}-p} +
(1-p)\int_1^{\exp(a)}\frac{z^{1-p} \ln(z)+1}{(z^{1-p}-p)^2}\dd z\\
&=\int_1^{\exp(a)}\frac{z^{1-p} \ln(z^{1-p})+1-p -(z^{1-p}-p)}{(z^{1-p}-p)^2}\\
&=\int_1^{\exp(a)}\frac{z^{1-p} \ln(z^{1-p})+1 -z^{1-p}}{(z^{1-p}-p)^2} >0
\end{split} 
\]
 for every $a>0$ since the integrand is strictly positive for every $z>1$; indeed the function
 $x \mapsto x \ln(x)+1-x=x(\ln(x)-1)+1$ is differentiable in $(1, +\infty)$ and continuous in $[1,+\infty)$ and
the derivative is $\ln(x)>0$ for all $x>1$. This implies that $p \mapsto \lambda(a,p)$ is strictly decreasing
for every fixed $a>0$.

Let us consider now the second expression for $\lambda$, namely $p\exp(-a\gamma)$, which holds for $p> p_M(a)$ where we recall that
$p_M(a)$ is the unique solution for $\int_1^{\exp(a)}\frac{\dd z}{z^{1-p}-p}=\frac{1}{p}$ with respect to $p$. 
From equation~\eqref{eq:gamma} we have that, for every fixed $a>0$, the function $p \mapsto y(p):= p \exp(-a\gamma(a,p))$
is implicitly defined by the equation $F_a(y,p)=0$ where 
\[
 F_a(y,p)=y \int_{p/y}^{\exp(a)} \frac{\dd v}{v^{1-p}-p}-1.
\]
The solution to the previous equation is uniquely defined in $(p_M,p)$ since $y \mapsto F_a(y,p)$ is strictly increasing for every fixed $p \in (0,1)$ and 
$F_a(p_M ,p)< p_M/p_M -1=0$, $F_a(p,p)=p/p_M-1>0$.
We can compute, using similar arguments as before, 
\[
 \begin{split}
\partial_p F_a(y,p) &= -\frac{1}{(p/y)^{1-p}-p} + y \int_{p/y}^{\exp(a)} \frac{v^{1-p}\ln(v)+1}{(v^{1-p}-p)^2}\dd v\\
&> -\frac{1}{(p/y)^{1-p}-p} + \frac{1}{1-p} y \int_{p/y}^{\exp(a)}\frac{\dd v}{v^{1-p}-p}\dd v\\ 
&=  -\frac{1}{(p/y)^{1-p}-p} + \frac{1}{1-p}>0
 \end{split}
\]
where in the last equality we used the fact that, since $F_a(y,p)=0$, then $y \int_{p/y}^{\exp(a)}\frac{\dd v}{v^{1-p}-p}\dd v=1$.
The last inequality holds since $p > y$. By standard arguments, $\partial_p F_a(y,p)>0$ implies that 
$p \mapsto \lambda(a,p)$ is strictly decreasing for every $a>0$.
\end{rem}

Before proving Theorem~\ref{th:implicit}, we prove that an ESS is of the form $\mu=\gamma\delta_0+(1-\gamma)\nu$, 
where $\nu$ is an absolutely  continuous probability measure.

\begin{lem}\label{lem:implicit}
 Let $\mu$ be an ESS and fix $p \in [0,1)$. Then for some $\gamma\in[0,1]$, $\mu=\gamma\delta_0+(1-\gamma)\nu$, 
where $\nu$ is an absolutely continuous probability measure. 
Moreover $[\min\mathrm{supp}(\mu),\overline t_f] \supseteq \mathrm{supp}(\mu)\supseteq[\min\mathrm{supp}(\mu),\overline t_f]\cap\mathrm{Esupp}(f)$ and if 
$\min\mathrm{supp}(\mu)<\min\mathrm{supp}(\nu)$ then $\mathrm{supp}(\nu) \supseteq \mathrm{Esupp}(f)$.
Finally, $\phi_\mu$ is constant on $[\min\mathrm{supp}(\mu),+\infty)$ and equals $\lambda$.
\end{lem}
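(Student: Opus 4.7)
The plan is to exploit the ESS condition $\phi_\mu(y)=\lambda$ on $\mathrm{supp}(\mu)$ together with $\phi_\mu(y)\le\lambda$ everywhere, analyzing how the explicit formula \eqref{eq:phiexplicit} constrains the structure of $\mu$. A useful preliminary observation is that $\phi_\mu(\overline t_f)\ge (1-p)+p=1>0$ for any $\mu$, hence $\lambda>0$ regardless of $p\in[0,1)$. Since $F_\mu$ is right-continuous, at any atom $y_0$ of $\mu$ the exponential prefactor $e^{-aF_\mu(y)}$ drops by the factor $e^{-a\mu(\{y_0\})}$ while the bracket is continuous in $y$, giving $\phi_\mu(y_0^-)=e^{a\mu(\{y_0\})}\phi_\mu(y_0)$. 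If $y_0>0$ were an atom, then $y_0\in\mathrm{supp}(\mu)$ would force $\phi_\mu(y_0)=\lambda>0$ and hence $\phi_\mu(y_0^-)>\lambda$, contradicting the ESS inequality. Therefore $\mu$ has atoms only possibly at $0$ and decomposes as $\gamma\delta_0+(1-\gamma)\nu$ with $\nu$ atomless.

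To bound the support, I would first note that for $y>\overline t_f$ the bracket stabilizes at a constant $K$, so $\phi_\mu(y)=Ke^{-aF_\mu(y)}$ is non-increasing. If $M:=\sup\mathrm{supp}(\mu)>\overline t_f$ then $F_\mu(\overline t_f)<1$ and the ESS condition $\phi_\mu(\overline t_f)\le\lambda=\phi_\mu(M)=Ke^{-a}$ combines with $\phi_\mu(\overline t_f)=Ke^{-aF_\mu(\overline t_f)}>Ke^{-a}$ to a contradiction; hence $\mathrm{supp}(\mu)\subseteq[m,\overline t_f]$ with $m:=\min\mathrm{supp}(\mu)$. For the reverse inclusion, let $(s,t)$ be a maximal gap in $\mathrm{supp}(\mu)$ inside $[m,\overline t_f]$, so that $s\in\mathrm{supp}(\mu)$ and $F_\mu\equiv F_\mu(s)$ on $[s,t)$ by right-continuity. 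A direct computation from \eqref{eq:phiexplicit} yields
\[
\phi_\mu(y)-\phi_\mu(s)=e^{-aF_\mu(s)}\bigl(e^{a(1-p)F_\mu(s)}-p\bigr)\int_s^y f(x)\,\dd x,\qquad y\in[s,t),
\]
with a prefactor bounded below by $(1-p)e^{-a}>0$. If $(s,t)\cap\mathrm{Esupp}(f)\ne\emptyset$ the integral is strictly positive for some $y$, yielding $\phi_\mu(y)>\lambda$. So every gap avoids $\mathrm{Esupp}(f)$, which is the claimed inclusion $\mathrm{supp}(\mu)\supseteq[m,\overline t_f]\cap\mathrm{Esupp}(f)$.

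The same identity also gives $\phi_\mu\equiv\lambda$ on each gap (since $f$ vanishes a.e.\ there) and a short calculation on $(\overline t_f,+\infty)$ shows $\phi_\mu\equiv Ke^{-a}=\phi_\mu(\overline t_f)=\lambda$; combined with the ESS condition this yields $\phi_\mu\equiv\lambda$ on $[m,+\infty)$. To obtain absolute continuity of $\nu$, I would rewrite this identity, for $y\in[m,\overline t_f]$, as
\[
\lambda\, e^{aF_\mu(y)}=p+\int_0^y\bigl(e^{a(1-p)F_\mu(x)}-p\bigr)f(x)\,\dd x.
\]
The right-hand side is absolutely continuous in $y$ (constant plus integral of an $L^1$ function), hence so is $e^{aF_\mu}$; since this function takes values in the compact interval $[1,e^a]\subset(0,\infty)$, composing with the Lipschitz function $\log/a$ shows $F_\mu$ is absolutely continuous on $[m,\overline t_f]$. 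Because $F_\mu$ is constant on $[0,m)$ and on $[\overline t_f,+\infty)$, the only singular part of $\mu$ is the possible atom at $0$, so $\nu$ is absolutely continuous.

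For the final inclusion, the hypothesis $\min\mathrm{supp}(\mu)<\min\mathrm{supp}(\nu)$ forces $\gamma>0$ and $m=0$, so $\mathrm{supp}(\mu)=\{0\}\cup\mathrm{supp}(\nu)$ and the previous step reads $\mathrm{Esupp}(f)\subseteq\{0\}\cup\mathrm{supp}(\nu)$. Since $\min\mathrm{supp}(\nu)>0$, the set $\mathrm{supp}(\nu)$ does not accumulate at $0$; but if $0\in\mathrm{Esupp}(f)$ then every neighborhood of $0$ carries positive $f$-measure, forcing $\mathrm{supp}(\nu)$ (which contains $\mathrm{Esupp}(f)\setminus\{0\}$) to accumulate at $0$, a contradiction. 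Hence $0\notin\mathrm{Esupp}(f)$ and $\mathrm{Esupp}(f)\subseteq\mathrm{supp}(\nu)$. The main technical hurdle I anticipate is the clean verification that $F_\mu$ inherits absolute continuity from $e^{aF_\mu}$ (via the bounded-range Lipschitzness of $\log/a$) and the careful treatment of the edge case $p=0$, where $\phi_\mu$ may vanish on $[0,\underline t_f)$ but $\lambda>0$ still holds, so that the no-atoms argument remains valid.
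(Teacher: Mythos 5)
Your argument is correct and follows essentially the same route as the paper's proof: the multiplicative jump of $\phi_\mu$ at an atom (which needs only $\lambda>0$), the increment identity $\phi_\mu(y)-\phi_\mu(s)=e^{-aF_\mu(s)}\bigl(e^{a(1-p)F_\mu(s)}-p\bigr)\int_s^y f$ over gaps of the support, and the relation $\lambda e^{aF_\mu}=H_\mu$ combined with the Lipschitz map $\log(\cdot)/a$ on $[1,e^a]$ to get absolute continuity. Two cosmetic points: the preliminary bound should read $\phi_\mu(\overline t_f)\ge e^{-aF_\mu(\overline t_f)}\ge e^{-a}$ rather than $\ge 1$ (you dropped the exponential prefactor), which still yields the needed $\lambda>0$; and in the upper bound on the support you should also dispose of the case $\sup\mathrm{supp}(\mu)=+\infty$, which follows from the same monotonicity of $\phi_\mu=Ke^{-aF_\mu}$ on $(\overline t_f,+\infty)$ by picking any support point $y_0>\overline t_f$ and a nearby one with strictly larger $F_\mu$, exactly as the paper does.
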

\begin{proof}
We already noted that we can write equivalently 
\[
 \phi_\mu(x)=\exp(-aF_\mu(x))\Big[\int_0^x\exp(a(1-p)F_\mu(z))f(z)\dd z+p\int_x^{+\infty}f(z)\dd z
 \Big]
\]
since $\mathrm{Esupp}(f) \subseteq [0,\overline t_f]$. From this equation we can see easily that $\phi$ is right-continuous on $[0,+\infty)$;
moreover it is left-continuous at $x$ if and only if $\mu(\{x\})=0$. 
If $(\alpha,\beta]$ is such that $\mu((\alpha,\beta])=0$ then  $(\alpha,\beta]\cap\mathrm{Esupp}(f)\neq\emptyset$,
 if and only if $\phi(\beta)>\phi(\alpha)$. Indeed, 
 \begin{equation}\label{eq:jumpphi}
  \phi_\mu(\beta)-\phi_\mu(\alpha)=\exp(-aF_\mu(\alpha))
  \Big(\exp(a(1-p)F_\mu(\alpha))-p\Big)\int_\alpha^{\beta}f(z)\dd z>0.
 \end{equation}
As a consequence, if $\mu((\alpha,\beta))=0$ then 
$(\alpha,\beta)\cap\mathrm{Esupp}(f)\neq\emptyset$ if and only if $\lim_{t \to \beta^-}\phi_\mu(t)>\phi_\mu(\alpha)$.
Moreover if $\{x_n\}_{n \in \N}$ is such that $x_n \in \mathrm{supp}(\mu)$, $x_n \not = x$ for all $n \in \N$ and 
$x_n \uparrow x$ then $x \in \mathrm{supp}(\mu)$ and $\mu(\{x\})=0$. The first assertion, namely $x \in \mathrm{supp}(\mu)$,
comes from the fact that the support of a measure is a closed set.
Suppose, by contradiction, that $x$ is an atom; clearly
$\phi(x_n)=\lambda$ for all $n$, since $\mu$ is an ESS, and
$\lim_{t \to x^-} \phi_\mu(t)>\phi_\mu(x)$ hence $\phi_\mu(x)<\lambda$
 which is a contradiction.
This proves that the only atom, if any, of $\mu$ must be $0$. Thus  $\mu=\gamma\delta_0+(1-\gamma)\nu$,
where $\gamma=\mu(0)$ and $\nu$ is nonatomic.

We prove now that if $\alpha<\beta$, $\alpha\in\mathrm{supp}(\mu)$
and $\beta\in\mathrm{Esupp}(f)$, then $\beta\in\mathrm{supp}(\mu)$; this implies  
$\mathrm{supp}(\mu)\supseteq[\min\mathrm{supp}(\mu),\overline t_f]\cap\mathrm{Esupp}(f)$.
By contradiction if $\beta \not \in \mathrm{supp}(\mu)$ there 
exists $\varepsilon>0$
such that $(\beta -\varepsilon, \beta+\varepsilon) \cap \mathrm{supp}(\mu)=\emptyset$. 
Let $\bar\beta:=\max (\mathrm{supp}(\mu)\cap[0,\beta])$ then $\bar\beta<\beta$. 
Since   $\beta\in\mathrm{Esupp}(f)$, $\int_{\beta-\varepsilon}^{\beta+\varepsilon} f(x) \dd x>0$.
Clearly $(\bar \beta,  \beta+\varepsilon) \cap \mathrm{supp}(\mu)=\emptyset$, as a consequence of equation~\eqref{eq:jumpphi},
$0<\lim_{t \to (\beta+\varepsilon)^-}\phi_\mu(t)-\phi_\mu(\bar \beta)=\lim_{t \to (\beta+\varepsilon)^-}\phi_\mu(t)-\lambda$.
Hence there exists $t \in (\beta -\varepsilon, \beta+\varepsilon)$ such that $\phi_\mu(t)>\lambda$
but this contradicts the definition of ESS. 

Let us prove that $[\min\mathrm{supp}(\mu),\overline t_f] \supseteq \mathrm{supp}(\mu)$. 
Observe that, for every $\beta > \overline t_f$ we have
\[
 \phi_\mu(\beta)-\phi_\mu(\overline t_f 
)= \big (\exp(-aF_\mu(\beta))-\exp(-aF_\mu(\overline t_f) \big ) \int_0^{\overline t_f}
\exp(a(1-p)F_\mu(z))f(z)\dd z \le 0. 
\]
On the other hand, if $\beta \in \mathrm{supp}(\mu)$, there exists $\varepsilon \ge 0$ such that $F_\mu(\beta+\varepsilon)> F_\mu(\overline t_f)$
and $\beta+\varepsilon \in \mathrm{supp}(\mu)$.
This implies $\phi_\mu(\beta+\varepsilon) < \phi_\mu(\overline t_f)$ which contradicts the
definition of an ESS. Thus $[\min\mathrm{supp}(\mu),\overline t_f] 
\supseteq \mathrm{supp}(\mu)$.


If $\min\mathrm{supp}(\mu)<\min\mathrm{supp}(\nu)$ then $\gamma>0$ and $0=\min\mathrm{supp}(\mu)<\min\mathrm{supp}(\nu)$.
On the other hand, $\mathrm{supp}(\nu)\subseteq\mathrm{supp}(\mu)\subseteq\mathrm{supp}(\nu)\cup\{0\}$,
that is, $\mathrm{supp}(\mu)\setminus\{0\}=\mathrm{supp}(\nu)\setminus\{0\}$.
Since when $\min\mathrm{supp}(\mu)=0$ we have  $\mathrm{supp}(\mu)\supseteq\mathrm{Esupp}(f)$, then 
$\mathrm{supp}(\nu)\setminus\{0\}\supseteq\mathrm{Esupp}(f)\setminus\{0\}$ which implies
$\mathrm{supp}(\nu)= \overline{\mathrm{supp}(\nu)\setminus\{0\}}\supseteq
\overline{\mathrm{Esupp}(f)\setminus\{0\}}=\mathrm{Esupp}(f)$.

We are left to prove that $\nu$ is absolutely continuous.
First of all we note that, from equation~\eqref{eq:phiexplicit}, $\phi_\mu(x)=\exp(-aF_\mu(x)) H_\mu(x)$
where
\[
H_\mu(x)=\int_0^x\exp(a(1-p)F_\mu(z))f(z)\dd z+p\int_x^{+\infty}f(z)\dd z
\]
is absolutely continuous and nondecreasing on $[0,\overline t_f]$. Since the only atom of $\mu$, if any, is $0$ and $\phi_\mu$ is right-continuous, 
clearly $\phi_\mu$ is continuous on $[0,\overline t_f]$. It is well-known that any open set in $\mathbb{R}$ can be decomposed into a disjoint,
at most countable
union of open intervals, hence $(0,+\infty)=(\mathrm{supp}(\mu)\setminus\{0\}) \cup \bigcup_{j \in J} I_j$ where 
$\{I_j\}_{j \in J}$ is an at most countable
disjoint union of open intervals in $(0,+\infty)$. By definition, $\phi_\mu(x)=\lambda$ for all $x \in \mathrm{supp}(\mu)$.
Since $\mathrm{supp}(\mu)\supseteq(\min \mathrm{supp}(\mu)\cap \mathrm{Esupp}(f) ,
\overline t_f)$ then $\int_{I_j} f(x) \dd x=0$  for all $I_j \subseteq (\min \mathrm{supp}(\mu),
+\infty)$. As a consequence of equation~\eqref{eq:jumpphi}, $\phi_\mu$ is constant on such intervals $I_j$, but, since the value
of $\phi_\mu$ at the extremal points of $I_j$ is $\lambda$, then $\phi_\mu(x)=\lambda$ for all $x \in I_j$.
This proves that $\phi_\mu(x)=\lambda$ for all $x \ge \min  \mathrm{supp}(\mu)$. Hence
$
 \lambda=\exp(-aF_\mu(x))H_\mu(x)
$ for all $x \ge \min  \mathrm{supp}(\mu)$
which implies
\begin{equation}\label{eq:Fmu}
 F_\mu(x)=
 \begin{cases}
\displaystyle \frac{1}{a}\log \Big (\frac{H_\mu(x)}{\lambda}\Big) &  \text{if } x \ge \min  \mathrm{supp}(\mu) \\
 \\
0 & \text{if } x <\min \mathrm{supp}(\mu). 
 \end{cases}
\end{equation}
By composition, $F_\mu$ is clearly absolutely continuous on $(\min \mathrm{supp}(\mu),\overline t_f)$.
Hence $F_\nu$ is absolutely continuous on $(\min \mathrm{supp}(\mu),\overline t_f)$ (since
$F_\mu(x)-(1-\gamma)F_\nu(x)=\gamma$ for all $x \ge 0$)
and this implies
that $\nu$ is an absolutely continuous measure (since $\mathrm{supp}(\nu) \subseteq [\min \mathrm{supp}(\mu),\overline t_f]$).
\end{proof}

Given a measurable subset 
$I \subseteq\mathbb{R}$, we denote by $L^1(I)$ 
 the set of real, measurable
 functions on $I$ which are integrable with respect to the Lebesgue measure. 
When a different measure $\rho$ needs to be 
 specified, we write $L^1(I,\rho)$ instead.

\begin{rem}\label{rem:abscont}
A well-known characterization which will be used in the sequel is the following \textit{Lebesgue's fundamental theorem
of calculus}
(see for instance \cite[Theorem 7.18]{cf:Rud}).
A function $G$ is absolutely continuous on a compact interval $I$ 
if and only if there exists
a function $g \in L^1(I)$ 
such that
for some ($\Longleftrightarrow$ for all) $\alpha \in I$ and for all $\beta \in I$, $G(\beta)-G(\alpha)=\int_\alpha^\beta g(x)\dd x$;
in that case, for almost every $x \in I$, $G$ is differentiable at $x$ and $G^\prime(x)=g(x)$.

This implies that an absolutely continuous function $G$ on $I$ is constant if and only if $G^\prime(x)=0$
for almost every $x \in I$.

Moreover consider two functions $h:I \to J$, $k:J\to Y$; if $k$ is absolutely continuous and $h$ is monotone and 
absolutely continuous then $k \circ h$ is absolutely continuous. Similarly, if $k$ is a Lipschitz function and 
$h$ is absolutely continuous then $k \circ h$ is absolutely continuous. If $h_1:I \to \mathbb{R}$ is absolutely continuous
then $h+h_1$ is absolutely continuous and the same holds for $h \cdot h_1$ if $I$ is compact.

Finally if $k$ is differentiable everywhere
and $h$ is differentiable almost everywhere clearly
we have $(k \circ h)'(x)=k'(h(x))\cdot h'(x)$ almost everywhere and, if in addition  $k \circ h$ is absolutely continuous,
%
%
%
\begin{equation}\label{eq:rem2.2.1}
 k \circ h(x)-k \circ h(x_0)=\int_{x_0}^x (k \circ h)'(z) \dd z = \int_{x_0}^x k'(h(z))\cdot h'(z) \dd z=(*)
\end{equation}
for all $x_0, x \in I$. If, in addition, $h$ is absolutely continuous 
then 
\begin{equation}\label{eq:rem2.2.2}
 \int_{h(x_0)}^{h(x)} k'(y) \dd y=k \circ h(x)-k \circ h(x_0)=(*).
\end{equation}
A function $k$ is locally Lipschitz on $I$ if and only if for every $x_0 \in I$ there exists $\delta>0$ and $M>0$ such that 
$|x-x_0|<\delta$, $|\bar x-x_0|<\delta$ implies $|h(x)-h(\bar x)|\le M |x-\bar x|$. By elementary analysis, if $h$ is a locally Lipschitz function
on $I$ and $I^\prime \subseteq I$ is compact then there exists $M>0$ such that for all $x,\bar x \in I^\prime$ we have $|h(x)-h(\bar x)|\le M |x-
\bar x|$
(that is, $h$ is globally Lipschitz on every compact subset of $I$). It is easy to show that a locally Lipschit function on $I$ is
absolutely continuous on $I$ (one can prove it on every compact subset and then use the fact that 
$I$ is the union of an increasing family of compact subintervals). Hence the following result holds.

Given an interval $I$, if $k$ is locally Lipschitz on $I$, differentiable everywhere and $h$ is absolutely continuous on $I$ 
then the equalities~\eqref{eq:rem2.2.1} and \eqref{eq:rem2.2.2} hold; moreover $k \circ h$ is constant on $I$
if and only if $(k'\circ h)(x)\cdot h'(x)=0$ almost everywhere in $I$.
\end{rem}

\begin{proof}[Proof of Theorem~\ref{th:implicit}]
By Lemma~\ref{lem:implicit}, we know that an ESS can be written as $\mu=\gamma\delta_0+(1-\gamma)\nu$, 
where $\nu$ is an absolutely continuous measure and $\mathrm{supp}(\mu)\supseteq[{x_\mu},\overline t_f]\cap\mathrm{Esupp}(f)$, where 
${x_\mu}:=\min\mathrm{supp}(\mu)$ (we will show later that $x_\mu=x_c$).
Since $F_\mu(y)-(1-\gamma)F_\nu(y)=\gamma$ for all $y \ge 0$, we have that $F_\mu$ is absolutely continuous on
$[0,+\infty)$.

Since $\phi_\mu$ is absolutely continuous, we can take the derivative of $\phi_\mu$ in \eqref{eq:phiexplicit}, 
for almost every $y>0$,
\begin{equation}\label{eq:differential}
 \phi_\mu^\prime(y)=-a(1-\gamma)g(y)\phi_\mu(y)+
 \exp(-aF_\mu(y))\Big[\exp(a(1-p)F_\mu(y))-p \Big]f(y),
\end{equation}
where $g$ is the derivative of $\nu$.
From now on it will be tacitly understood that the derivatives and equalities involving them,
are defined and hold almost everywhere.
From Lemma~\ref{lem:implicit} we know that for all $y>{x_\mu}$ we have $\phi_\mu(y)=\lambda$ hence 
$\phi_\mu^\prime(y)=0$. Thus
\[
   g(y)=\frac{f(y)}{a(1-\gamma)\lambda}\exp(-aF_\mu(y))\Big[\exp(a(1-p)F_\mu(y))-p \Big].   
\]
Let $z:=\exp(aF_\mu)$. Using the last result of Remark~\ref{rem:abscont} (by taking $I=[0,+\infty)$,
$k(x):=\exp(ax)$ and $h:=F_\mu$), the previous equation is equivalent to
\[
z^\prime(y)=\frac{f(y)}{\lambda} (z(y)^{1-p}-p),
\]
which, since $z(y)^{1-p}>p$ for all $y\ge0$, is in turn equivalent to
\[
\frac{z^\prime(y)}{z(y)^{1-p}-p}=\frac{f(y)}{\lambda},
\]
with the condition $z({x_\mu})=\exp(a\gamma)$.
By Remark~\ref{rem:abscont}, this is equivalent to
\begin{equation}\label{eq:almost1}
 \int_{\exp(a\gamma)}^{\exp(a(1-\gamma)F_\nu(x)+a\gamma)}\frac{\dd z}{z^{1-p}-p}=\frac1\lambda \int_{x_\mu}^xf(y)\dd y,
\end{equation}
which, once we prove that $x_c=x_\mu$, is equivalent to  equation~\eqref{eq:implicit}.
This proves that given $\gamma$, $F_\nu$  is uniquely defined by the previous equation.
From the previous equation, for all $x \in [x_\mu,\overline t_f]$, 
\[
 \int_{\exp({a(1-\gamma)F_\nu(x-\varepsilon)+a \gamma})}^{\exp({a(1-\gamma)F_\nu(x+\eps)+a \gamma})}\frac{\dd z}{z^{1-p}-p}=\frac{1}{\lambda}
\int_{x-\eps}^{x+\eps}f(y)\dd y,
\]
which implies
that $\mathrm{supp}(\nu)=[{x_\mu},\overline t_f]\cap\mathrm{Esupp}(f)$.
Now, since $\mathrm{supp}(\mu)\setminus\{0\}=\mathrm{supp}(\nu)\setminus\{0\}$, from Lemma~\ref{lem:implicit} we have that either
$\mu=\nu$ (hence $\mathrm{supp}(\mu)=[{x_\mu},\overline t_f]\cap\mathrm{Esupp}(f)$ and $\min\mathrm{supp}(\mu)=\min\mathrm{supp}(\nu)$)
or $\gamma>0$ and $x_\mu=0$. In this case  $\mathrm{supp}(\nu)=\mathrm{Esupp}(f)$ and $\mathrm{supp}(\mu)=\mathrm{Esupp}(f)\cup\{0\}$.

In particular, since $x_\mu \in \mathrm{supp}(\mu)$, when $\mu=\nu$ (that is, $\gamma=0$) we have that $x_\mu \in \mathrm{Esupp}(f)$.
Clearly the measure $\nu$ is supported
in $[0,\overline t_f]$, but it is a probability measure if and only if $F_\nu(\overline t_f)=1$, 
therefore, by using equation~\eqref{eq:almost1}, ${x_\mu}$ and $\gamma$ must satisfy
\begin{equation}\label{eq:alambda}
  \int_{\exp(a\gamma)}^{\exp(a)}\frac{\dd z}{z^{1-p}-p}=\frac1\lambda \int_{x_\mu}^{\overline t_f}f(y)\dd y.
\end{equation}

We note that $\lambda=\phi_\mu(\min \mathrm{supp}(\mu))=\phi_\mu(x_\mu)$. 
If $\gamma=0$ then $\mu=\nu$ hence, from equation~\eqref{eq:Fmu}, by continuity of $F_\mu$ at
$x=\min \mathrm{supp}(\mu)\equiv x_\mu$ we have $\lambda=H_\mu(x_\mu)$, that is
\begin{equation}\label{eq:xcandlambda}
\lambda=\int_0^{x_\mu}f(y)\dd y+p\int_{x_\mu}^{\overline t_f}f(y)\dd y =
1-(1-p)\int_{x_\mu}^{\overline t_f}f(y)\dd y
\end{equation}
since $\exp(a(1-p)F_\mu(y))=1$ for all $y \in [0,x_\mu)$.

If $\gamma>0$ then $x_\mu=0$ and, from equation~\eqref{eq:phiexplicit},  
$\lambda=\phi_\mu(0)=p\exp(-a\gamma)$ which is the second line of equation~\eqref{eq:deflambda}.

Let $a\le a_M$. If $\gamma>0$ then by  equation~\eqref{eq:alambda} and the discussion thereafter,
\[
 \begin{split}
  \frac1p&= \int_{1}^{\exp(a_M)}\frac{\dd z}{z^{1-p}-p}\ge\int_{1}^{\exp(a)}\frac{\dd z}{z^{1-p}-p}\\
& >\int_{\exp(a\gamma)}^{\exp(a)}\frac{\dd z}{z^{1-p}-p}=\frac1\lambda>\frac1p,
 \end{split}
\]
which is a contradiction.
If $\gamma=0$ then by plugging the explicit value of $\lambda=1-(1-p)\int_{x_\mu}^{\overline t_f}f(y)\dd y$ into
\eqref{eq:alambda}, we get \eqref{eq:xc} (with $x_\mu$ instead of $x_c$) and since 
we showed that $x_\mu \in \mathrm{Esupp}(f)$, 
thus it must be the (unique) maximal solution to equation~\eqref{eq:xc}, that is
$x_\mu=x_c$. This proves that $a\le a_M$ implies $\gamma=0$ and $x_c \ge \underline t_f$. 
Since the r.h.s.~of equation~\eqref{eq:xc} is strictly less than 1 (resp.~equal to 1) when $a<a_M$ (resp.~$a=a_M$)
then equation~\eqref{eq:xc} yields $x_c > \underline t_f $ (resp.~$x_c =\underline t_f $).
In order to obtain the first line of equation~\eqref{eq:deflambda} just 
consider the expression of $\int_{x_c}^{\overline t_f}f(y)\dd y$ given by the r.h.s.~of equation~\eqref{eq:xc} 
and plug it in equation~\eqref{eq:xcandlambda} (recalling that $x_\mu=x_c$).

Let $a> a_M$. If $\gamma=0$ then, by the definition of $a_M$, equation~\eqref{eq:alambda} and the discussion thereafter,
\[
 \begin{split}
  \frac1\lambda \int_{x_\mu}^{\overline t_f}f(y)\dd y & = \frac{\int_{x_\mu}^{\overline t_f}f(y)\dd y}{1-(1-p)\int_{x_\mu}^{\overline t_f}f(y)\dd y} \le\frac1p
=\int_{1}^{\exp(a_M)}\frac{\dd z}{z^{1-p}-p}\\
&<\int_{1}^{\exp(a)}\frac{\dd z}{z^{1-p}-p}=\frac1\lambda \int_{x_\mu}^{\overline t_f}f(y)\dd y,
 \end{split}
\]
which is a contradiction.
If $\gamma>0$ then $x_\mu=0$ (which coincides with the definition of $x_c$ given before equation~\eqref{eq:xc}) and
by plugging the explicit value of $\lambda=p\exp(-a\gamma)$ in equation~\eqref{eq:alambda} we have
\[
 \frac1p=\exp(-a\gamma)\int_{\exp(a\gamma)}^{\exp(a)}\frac{\dd z}{z^{1-p}-p}=\int_{1}^{\exp(a(1-\gamma))}
\frac{\dd z}{(z\exp(a\gamma))^{1-p}-p},
\]
which has a unique solution $\gamma$. Note that the r.h.s.~of the equation is a decreasing continuous function of $\gamma$,
say $K(\gamma)$;
moreover $K(0)>1/p$ by \eqref{eq:aM} and 
\[
 K(1-a_M/a)=\int_1^{\exp(a_M)}\frac{\dd z}{(z\exp(a-a_M))^{1-p}-p}<\int_1^{\exp(a_M)}\frac{\dd z}{z^{1-p}-p}=\frac1p.
\]
This implies that the unique solution $\gamma\in(0,1-a_M/a)$; $\gamma < p$ was proved in Remark~\ref{rem:lambdadecreasing}.
\end{proof}

Before giving the details on Remark~\ref{rem:maximal} we need a Lemma and another remark.

\begin{lem}\label{lem:stochdomin}
 Let $\mu$ and $\nu$ be two finite measures on $\mathbb{R}$.
Then the following conditions are equivalent:
\begin{enumerate}[(1)]
 \item 
$\int h(x) \mu(\dd x) \ge \int h(x) \nu(\dd x)$ for every nondecreasing, measurable $h \in L^1(\mathbb{R},\mu)\cap L^1(\mathbb{R},\nu)$;
\item
$\mu((x,+\infty)) \ge \nu ((x,+\infty))$ for
every $x \in \mathbb{R}$ and $\mu(\R)=\nu(\R)$;
\item
$\int h(x) \mu(\dd x) \le \int h(x) \nu(\dd x)$ for every nonincreasing, measurable $h \in L^1(\mathbb{R},\mu)\cap L^1(\mathbb{R},\nu)$;
\item
$\mu((-\infty,x]) \le \nu ((-\infty,x])$ for
every $x \in \mathbb{R}$  and $\mu(\R)=\nu(\R)$.
\end{enumerate}
If one of these conditions holds
we write  $\mu \succeq \nu$.

Moreover the inequality in (1) is strict if and only if there exists $y$ such that 
$\mu\big (h^{-1}([y,+\infty))\big )<\nu\big (h^{-1}([y,+\infty))\big )$ (that is, if and only if there exists $y$ such that 
$\mu\big (h^{-1}((y,+\infty))\big )<\nu\big (h^{-1}((y,+\infty))\big )$).
 
Finally the inequality in (3) is strict if and only if there exists $y$ such that 
$\mu\big (h^{-1}([y,+\infty))\big )<\nu\big (h^{-1}([y,+\infty))\big )$ (that is, if and only if there exists $y$ such that 
$\mu\big (h^{-1}((y,+\infty))\big )<\nu\big (h^{-1}((y,+\infty))\big )$).
\end{lem}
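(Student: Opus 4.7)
The proof has three tasks: the easy equivalences among the four conditions, the one nontrivial inequality direction, and the strictness claims. First, the equivalences $(2)\Leftrightarrow(4)$ and $(1)\Leftrightarrow(3)$ are bookkeeping. The former follows from the complement identity $\mu((-\infty,x])=\mu(\R)-\mu((x,+\infty))$ together with the total-mass equality built into both (2) and (4). The latter is the substitution $h\mapsto -h$, which preserves $L^1(\R,\mu)\cap L^1(\R,\nu)$ and exchanges ``nondecreasing'' with ``nonincreasing'' while reversing the integral inequality. The implication $(1)\Rightarrow(2)$ then comes by testing (1) on two classes of simple nondecreasing functions in $L^1(\R,\mu)\cap L^1(\R,\nu)$: the indicator $h_y(x):=\ident_{(y,+\infty)}(x)$ for each $y\in\R$ yields $\mu((y,+\infty))\ge\nu((y,+\infty))$, and the constants $h\equiv\pm 1$ jointly yield $\mu(\R)=\nu(\R)$.

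The only substantive direction is $(2)\Rightarrow(1)$, for which the plan is the layer-cake representation. For a bounded nondecreasing $h$ with $|h|\le M$, I would rewrite
\[
h(x)=-M+\int_{-M}^{M}\ident_{\{h(x)>s\}}\,\dd s,
\]
integrate against $\mu-\nu$, and apply Fubini's theorem together with $\mu(\R)=\nu(\R)$ to obtain
\[
\int h\,\dd\mu-\int h\,\dd\nu=\int_{-M}^{M}\bigl(\mu(\{h>s\})-\nu(\{h>s\})\bigr)\,\dd s.
\]
Since $h$ is nondecreasing, each set $\{h>s\}$ is a right half-line of the form $(a_s,+\infty)$, so the integrand is nonnegative by (2) and the right-hand side is nonnegative. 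For a general $h\in L^1(\R,\mu)\cap L^1(\R,\nu)$ I would truncate by $h_M:=(h\wedge M)\vee(-M)$, apply the bounded case, and pass to the limit $M\to\infty$ via the Dominated Convergence Theorem (using $|h_M|\le|h|$).

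The strictness claims are read off the same layer-cake identity. Because its integrand is nonnegative under (2), strictness of the integral inequality is equivalent to strict positivity of $s\mapsto\mu(\{h>s\})-\nu(\{h>s\})$ on a set of positive Lebesgue measure. The function $s\mapsto\mu(\{h>s\})$ is right-continuous (continuity from below along $\{h>s_n\}\uparrow\{h>s\}$ as $s_n\downarrow s$, valid since $\mu$ is finite), and likewise for $\nu$, so strict positivity on a set of positive measure is in turn equivalent to strict positivity at some single point $s_0$. Finally, the passage from the open half-line $\{h>s_0\}$ to a closed half-line $\{h\ge y\}$ as required by the statement uses continuity of finite measures along the monotone identity $\{h>s_0\}=\bigcup_{y>s_0}\{h\ge y\}$ to produce a suitable $y$. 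The strictness claim for (3) is obtained by applying the same argument to $-h$. I expect the main obstacle to be careful bookkeeping of open versus closed half-lines and possible atoms of $h$ when converting between $\{h>s\}$ and $h^{-1}([y,+\infty))$; the rest of the argument is routine measure theory.
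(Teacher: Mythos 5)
Your proof is correct in substance, but it takes a genuinely different route from the paper: the paper does not prove the equivalence of (1)--(4) at all, delegating it to a standard reference (Shaked--Shanthikumar, Section 1.A.1) and supplying only the two small pieces it actually needs, namely that testing (1) or (3) against $h\equiv 1$ and $h\equiv -1$ forces $\mu(\R)=\nu(\R)$, and that the two formulations of the strictness condition (with $h^{-1}([y,+\infty))$ versus $h^{-1}((y,+\infty))$) are interchangeable because $y\mapsto\mu\big(h^{-1}([y,+\infty))\big)$ is continuous from the left while $y\mapsto\mu\big(h^{-1}((y,+\infty))\big)$ is continuous from the right. Your layer-cake/Fubini argument is precisely the classical proof being cited, so what you buy is a self-contained lemma at the cost of some bookkeeping; your strictness analysis via right-continuity of $s\mapsto\mu(\{h>s\})-\nu(\{h>s\})$ and the monotone union $\{h>s_0\}=\bigcup_{y>s_0}\{h\ge y\}$ is essentially the same one-sided-continuity device the paper invokes for the open/closed conversion.

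Two points need attention. First, as you yourself anticipate, for nondecreasing $h$ the level set $\{h>s\}$ need not be an open half-line $(a_s,+\infty)$; it can be $[a_s,+\infty)$. You must therefore extend (2) from open to closed half-lines by writing $[a,+\infty)=\bigcap_n(a-1/n,+\infty)$ and using continuity from above (legitimate because the measures are finite); without this step the integrand in your layer-cake identity is not yet known to be nonnegative. Second --- a remark on the statement rather than on your argument --- your derivation correctly shows that the inequality in (1) is strict if and only if $\mu\big(h^{-1}((s_0,+\infty))\big)>\nu\big(h^{-1}((s_0,+\infty))\big)$ for some $s_0$ (equivalently with $[y,+\infty)$, by your monotone-union step). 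For nondecreasing $h$ these preimages are right half-lines, on which (2) forces $\mu\ge\nu$, so the reversed inequality printed in the lemma for case (1) can never occur; the printed direction is correct only in case (3) (nonincreasing $h$), which is the only case the paper actually uses (with $h(s)=e^{-as}$ in Remark~\ref{rem:maximal} and Proposition~\ref{pro:changepaveragefitness}). Do not contort your argument to reproduce the printed sign in case (1).
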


\begin{proof}
 The equivalence between  (1) and (2) (or between (3) and (4)) is a classical  result of measure theory:
 it is a slight modification of the arguments in \cite[Section 1.A.1]{cf:ShSh07}. Moreover, to prove that
$\mu(\mathbb{R})=\nu(\mathbb{R})$ just take condition (1) (or condition (3)) and consider first $h\equiv 1$
and then $h \equiv -1$. Under $\mu(\mathbb{R})=\nu(\mathbb{R})$ the equivalence between conditions (2) and (4)
is trivial.
In particular the equivalence between the previous four conditions hold even if we take 
the set of nonnegative (or nonpositive) measurable functions instead of $L^1(\mathbb{R},\mu)\cap L^1(\mathbb{R},\nu)$.

Finally note that 
$y \mapsto \mu([y,+\infty))$ and $y \mapsto \nu ([y,+\infty))$ are continuous from the left while
 $y \mapsto \mu((y,+\infty))$ and $y \mapsto \nu ((y,+\infty))$ are continuous from the right;
this implies that  there exists $y$ such that 
$\mu\big (h^{-1}([y,+\infty))\big )<\nu\big (h^{-1}([y,+\infty))\big )$ if and only if there exists $y^\prime$ such that 
$\mu\big (h^{-1}((y^\prime,+\infty))\big )<\nu\big (h^{-1}((y^\prime,+\infty))\big )$.
%
%
%
%
%
%
\end{proof}

\begin{rem}\label{rem:quasiuniform}
Given a random variable $X$ with law $\mu$, then the composition $\xi:=F_\mu \circ X$ between the variable and its cumulative distribution function
is a random variable with values in $\mathrm{Rg}(F_\mu) \subseteq [0,1]$
such that
$\pr(\xi \le t)\le t$ for all $t \in [0,1]$; moreover $\pr(\xi \le t)= t$ if and only if $t \in \mathrm{Rg}(F_\mu):=F_\mu(\mathbb{R})$.
More generally $\pr(\xi \le t)= \sup\{F_\mu(r) \colon F_\mu(r) \le t\}$, thus if 
$\mu(z)>0$ then $\pr(\xi \le t)=\lim_{s \to z^-} F_\mu(s)=F_\mu(z)-\mu(z)$ for all $t \in [\lim_{s \to z^-} F_\mu(s), F_\mu(z))\equiv [F_\mu(z)-\mu(z), F_\mu(z))$.

To be precise if we 
consider $\{x \colon \mu(x)>0\}=:\{x_i\colon i \in J\}$, the at-most-countable set of discontinuity points of $F_\mu$
(where $J \subseteq \mathbb{N}$),
and define $I:=\bigcup_{i \in J} [F_\mu(x_i)-\mu(x_i),F_\mu(x_i))$ we have
\[
 F_\xi(t)=
\begin{cases}
 t & \textrm{if } t \not \in I\\
F_\mu(x_i)-\mu(x_i) & \textrm{if } t \in [F_\mu(x_i)-\mu(x_i),F_\mu(x_i)).
\end{cases}
\]
This implies in particular that if $F_\mu$ is continuous then the random variable $\xi$ is uniformly distributed on $[0,1]$ otherwise,
according to Lemma~\ref{lem:stochdomin},
$\xi \succ \mathrm{Unif}(0,1)$ (that is, the law of $\xi$ stochastically dominates a the uniform distribution on $[0,1]$).
\end{rem}

\begin{proof}[Details on Remark~\ref{rem:maximal}]
For a generic $f$, 
by performing a time rescaling, we can assume without loss of generality that $\overline t_f=1$. Consider the following family of
absolutely continuous (with respect to the Lebesgue measure) measures $\mu_n(\cdot)=n |\cdot \cap[1-1/n,1]|$ with support in $[0,1]$.
(where $|\cdot|$ represents the Lebesgue measure on $\mathbb{R})$. If we define 
$c_n(f,p):= \int_0^{1-1/n} f(z) \dd z +p \int_{1-1/n}^1 f(z) \dd z$ then $c_n(f,p) \uparrow 1$ as $n \to \infty$ and we have
 \[
 \begin{split}
\int \phi_{\mu_n}(y) \mu_n(\dd y)&= \int_{1-1/n}^1 e^{-an(y-1+1/n)} \Big [c_n(f,p)+ \int_{1-1/n}^y (e^{a(1-p)n(z-1+1/n)}-p)f(z) \dd z
\Big ] n \dd y\\
& \ge \int_{1-1/n}^1 e^{-an(y-1+1/n)} c_n(f,p) n \dd y = c_n(f,p)\int_0^1 e^{-ay^\prime} \dd y^\prime \uparrow \frac{1-\exp(-a)}{a}
 \end{split}
\]
as $n \to \infty$ (according to the Monotone 
Convergence Theorem). This example can be modified in many ways: for instance, one can take a family of absolutely continuous measures,
with strictly positive densities on $[0,1]$, which approximate the previous densities  $n\ident_{[1-1/n,1]}$ in the $L^1$ norm 
(we do not give more details on this).

We are left to prove that $\int \phi_\mu(y) \mu(\dd y)<(1-\exp(-a))/{a}$ for all $\mu$.
We start by proving that for every measurable, nonincreasing function $h$ and every $z \in \mathbb{R}$ we have
$\int_{(z, +\infty)} h(F_\mu(y)) \mu(\dd y) \le \int_{(F_\mu(z),1]} h(y) \dd y$ (and, if $h$ is 
strictly decreasing on $[0,1]$,
then the inequality is
strict if and only if $F_\mu$ is not continuous on $(z,+\infty)$). 
By stochastic domination (see Remark~\ref{rem:quasiuniform}), if we denote by $\pr_\xi$ the law of $\xi$, 
$\int_{\mathbb{R}} h(F_\mu(y)) \mu(\dd y)=\int_{[0,1]}h(s) \pr_{\xi}(\dd s) \le \int_{[0,1]} h(s) \dd s$ 
and, for a 
strictly decreasing function, the inequality
is strict if and only if $F_\mu$ is not continuous on $\mathbb{R}$
(apply Lemma~\ref{lem:stochdomin} to $\pr_\xi$ and to the uniform distribution on $[0,1]$ 
and choose $y:=h(s)$ for some $s \in [\lim_{w \to z^-} F_\mu(w), F_\mu(z))$ whenever this interval is nonempty;
note that, for a 
strictly decreasing function, $h^{-1}([h(s), +\infty))=(-\infty,s]$
for all $s$).
Take $z  \in \mathbb{R}$; then
\begin{equation}\label{eq:equalitypxi}
\int_{(F_\mu(z),1]} h(s) \pr_{\xi}(\dd s)=\int_{F_\mu^{-1}((F_\mu(z),1])} h(F_\mu(y)) \mu(\dd y) =\int_{(z, +\infty)} h(F_\mu(y)) \mu(\dd y)
\end{equation}
since $F_\mu^{-1}((F_\mu(z),1]) \supseteq (z,+\infty)$ and $\mu \big (F_\mu^{-1}((F_\mu(z),1])\setminus (z,+\infty) \big )=0$.


By the previous equation we just need to prove that $\int_{(F_\mu(z),1]} h(s) \pr_{\xi}(\dd s)
\le \int_{(F_\mu(z),1]} h(s) \dd s$ and this can be done in two different ways.

The quick way is to note that, according to Remark~\ref{rem:quasiuniform}, $\pr_\xi([0,F_\mu(z)])$ equals $F_\mu(z)$, 
that is, the lebesgue measure of the interval $[0,,F_\mu(z)]$;
hence the measure $\pr_\xi$ restricted to $(F_\mu(z), 1]$ dominates the Lebesgue measure on the same interval, thus
Lemma~\ref{lem:stochdomin} applies.

An alternate proof is as follows; if we define
\[
 \bar h(y) :=
\begin{cases}
 h(s) & \text{if } s \ge F_\mu(z) \\
h(F_\mu(z)) &\text{if } s < F_\mu(z)\\ 
\end{cases}
\]
then $\bar h$ is nonincreasing hence, by stochastic domination, $\int_{\mathbb{R}} \bar h(F_\mu(y)) \mu(\dd y) \le \int_{[0,1]} \bar h(s) \dd s$.
Thus
\begin{equation}\label{eq:domination}
 \begin{split}
\int_{(z, +\infty)} h(F_\mu(y)) \mu(\dd y)&=\int_{(z, +\infty)} \bar h(F_\mu(y)) \mu(\dd y)=
\int_{\mathbb{R}} \bar h(F_\mu(y)) \mu(\dd y)-\int_{(-\infty,z]} \bar h(F_\mu(y)) \mu(\dd y)\\
&=  \int_{\mathbb{R}} \bar h(F_\mu(y)) \mu(\dd y)-\bar h(F_\mu(z)) \mu((-\infty,z])\\
& \le 
\int_{[0,1]} \bar h(s) \dd s-\bar h(F_\mu(z)) \mu((-\infty,z])=\int_{[0,1]} \bar h(s) \dd s-\bar h(F_\mu(z)) F_\mu(z)\\
&=\int_{[0,1]} \bar h(s) \dd s-\int_{[0,F_\mu(z)]} \bar h(s) \dd s\\
&=\int_{(F_\mu(z),1]} \bar h(y) \dd y=\int_{(F_\mu(z),1]} h(y) \dd y\\
 \end{split}
\end{equation}
and, if $h$ is strictly decreasing on $[0,1]$, the inequality is strict if and only if $F_\mu$ is not continuous on $(z,+\infty)$
(since $\bar h$ is strictly decreasing on $[F_\mu(z),1]$ we use equation~\eqref{eq:equalitypxi} 
and we can apply again Lemma~\ref{lem:stochdomin}  
to $\pr_\xi$ and to the uniform distribution on $[0,1]$ by choosing 
$y:=h(s)$ for some $s \in [\lim_{w \to \bar z^-} F_\mu(w), F_\mu(\bar z))$ whenever this interval is nonempty and $\bar z \in (z +\infty)$).
%
%



Clearly
\[
\begin{split}
\int \phi_\mu(y) \mu(\dd y)= \mathbb{E} \Big [\exp(-a\xi)\Big[\int_0^X\exp(a(1-p)F_\mu(z))f(z)\dd z+p\int_X^{\overline t_f}f(z)\dd z
 \Big]
\Big ].\\
\end{split}
\]
Hence, using Fubini's Theorem and the inequality~\eqref{eq:domination} with $h(s):=e^{-as}$, we have
\[
\begin{split}
\int \phi_\mu(y) \mu(\dd y)
&= \int
e^{-aF_\mu(y)} \Big [ \int_0^y  (e^{a(1-p)F_\mu(z)}-p) f(z) \dd z +p
\Big ]
\mu(\dd y)\\
& = p\int
e^{-aF_\mu(y)}\mu(\dd y) + \int  (e^{a(1-p)F_\mu(z)}-p) f(z) \int_{(z, +\infty)} 
e^{-aF_\mu(y)} 
\mu(\dd y) \dd z\\
&\le  p\int_{[0,1]} e^{-a y} \dd y+
\int  (e^{a(1-p)F_\mu(z)}-p) f(z) \int_{(F_\mu(z),1]} e^{-a y} \dd y \dd z\\
&= p \frac{1-e^{-a}}{a}+
\int  (e^{a(1-p)F_\mu(z)}-p) f(z) \frac{e^{-aF_\mu(z)} -e^{-a}}{a}\dd z\\
&= p \frac{1-e^{-a}}{a}+
\int  (e^{-apF_\mu(z)}-pe^{-aF_\mu(z)}) f(z) \frac{1 -e^{-a(1-F_\mu(z))}}{a}\dd z
=(**)
\end{split}
\]
and the inequality is strict if and only if $F_\mu$ is not continuous.
Now $(1 -e^{-a(1-F_\mu(z))})/a \le (1 -e^{-a})/a$ and 
$e^{-apF_\mu(z)}-pe^{-aF_\mu(z)} \le 1-p$ (since $s \mapsto e^{-aps}-pe^{-as}$ is decreasing in $[0,+\infty)$); 
moreover each of the previous inequalities become a strict inequality
if and only if $F_\mu(z)>0$. 
Whence
\[
\begin{split}
(**)
& \le 
p \frac{1-e^{-a}}{a} +(1-p) \frac{1-e^{-a}}{a}
\end{split}
\]
and there is a strict inequality if and only if $\inf \mathrm{supp}(\mu) < \sup \mathrm{Esupp}(f)$.
\end{proof}

\begin{proof}[Proof of Proposition~\ref{pro:timeintervals}]
 \begin{enumerate}
  \item  Applying Proposition~\ref{pro:domination},
 we have easily that $\int_z^y f_1(x)\dd x \ge \int_z^y f_2(x) \dd x$ for all 
$z \le y$, $y \in [\underline t_1, \underline t_2]$, hence $\phi_\mu^{(1)}(y) \ge \phi_\mu^{(2)}(y)$.

\item
If $\underline t_2 \ge \overline t_1$ the conclusion follows from Proposition~\ref{pro:timeintervals}(1). Let us suppose 
$\underline t_2 < \overline t_1$.
Now let us evaluate $\phi_\mu^{(2)}(y)$ for $y\in[\underline t_2,\overline t_1]$:
\[\begin{split}
 \phi_\mu^{(2)}(y)&=\exp(-aF_\mu(y))\int_{\underline t_2}^y\exp(a(1-p)F_\mu(x))f_2(x)dx+
 p\exp(-aF_\mu(y))\Prob(D_2\in[y,\overline t_1))\\
 &+p\exp({-a})\Prob(D_2\ge\overline t_1)=(\ast).
 \end{split}
\] 
Note that this equality holds even if $\overline{t}_1\ge \overline{t}_2$ since
in that case $\Prob(D_2\ge\overline t_1)=0$.
\[\begin{split} 
 (\ast)&\le \exp(-apF_\mu(y))\Prob(D_2\in[\underline t_2,y])+p\exp(-aF_\mu(y))\Prob(D_2\in[y,\overline t_1])+
 p\exp(-a)\Prob(D_2\ge\overline t_1)\\
 &\le \Prob(D_2\in[\underline t_2,\overline t_1])+p\exp(-a)\Prob(D_2\ge\overline t_1)
 \end{split}
\]
which is smaller than $\exp(-a)$ if
$\Prob(D_2\in[\underline t_2,\overline t_1])$ is sufficiently small. 
When $\mu$ is an ESS, then for all $y\in[\underline{t}_1,\overline{t}_1]$ we have
$\phi_\mu^{(1)}(y)=\lambda(a,p)\ge\exp(-a)$.
If $\underline{t}_2\le\underline{t}_1$ then the proof is complete.
If $\underline{t}_2>\underline{t}_1$ then the conclusion follows using Proposition~\ref{pro:timeintervals}(1)
for $y\in [\underline{t}_1,\underline{t}_2]$.
  \item 
 Consider the solution $y_1$ to the equation 
   $(y-\underline t_1)/(\overline t_1-\underline t_1)=(y-\underline t_2)/(\overline t_2-\underline t_2)$, namely
   $y_1:= (\underline t_2\overline t_1-\underline t_1 \overline t_2)/
 (\underline t_2+\overline t_1-\underline t_1 -\overline t_2)$; clearly for all $y \ge y_1$
 we have
 $(y-\underline t_1)/(\overline t_1-\underline t_1)\le (y-\underline t_2)/(\overline t_2-\underline t_2)$, thus
 Proposition~\ref{pro:domination} implies $\phi_\mu^{(2)}(y) \ge \phi_\mu^{(1)}(y)$.
 \end{enumerate}
%
%
\end{proof}

\begin{pro}\label{pro:domination}
 Consider two probability densities $f_1$ and $f_2$. Let us define $\phi_\mu^{(1)}$ and $\phi_\mu^{(2)}$ according
 to equation~\eqref{eq:phiexplicit} using $f_1$ and $f_2$ respectively.
 Fix $y >0$ such that $\int_0^y f_1(x) \dd x>0$ or $\int_0^y f_2(x) \dd x>0$.
 \begin{enumerate}
 \item Define the set of strong maxima from the left of $F_\mu$ as 
 $M_\mu(y):=\{z \in [0,y] \colon F_\mu(s)<F_\mu(z) \textrm{ for all }s<z \}$.
 If
 \begin{equation}\label{eq:int1}
  \int_z^y f_1(x) \dd x \ge \int_z^y f_2(x) \dd x, \quad \forall z \in M_\mu(y)
 \end{equation}
 then $\phi_\mu^{(1)}(y) \ge \phi_\mu^{(2)}(y)$. In this case  $\phi_\mu^{(1)}(y) = \phi_\mu^{(2)}(y)$ if and only if
 equality holds in equation~\eqref{eq:int1} for all 
 $z \in M_\mu(y)$.
 \item
 If $\int_0^y f_1(x) \dd x>\int_0^y f_2(x) \dd x$ and $a$ is sufficiently small then $\phi_\mu^{(1)}(y) > \phi_\mu^{(2)}(y)$.
 \end{enumerate}
\end{pro}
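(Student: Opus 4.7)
The plan is to work directly with the difference
\[
\phi_\mu^{(1)}(y)-\phi_\mu^{(2)}(y)=\exp(-aF_\mu(y))\int_0^y h(x)\big(f_1(x)-f_2(x)\big)\dd x,
\]
where $h(x):=\exp(a(1-p)F_\mu(x))-p$ is non-decreasing (since $F_\mu$ is, and $a(1-p)\ge 0$) and satisfies $h(x)\ge 1-p>0$ because $p<1$. Setting $G_i(z):=\int_z^y f_i(x)\dd x$, I would apply Stieltjes integration by parts, valid because $F_1-F_2$ is absolutely continuous and $h$ is right-continuous of bounded variation, to rewrite the integral as
\[
h(0)\big(G_1(0)-G_2(0)\big)+\int_{(0,y]}\big(G_1(s)-G_2(s)\big)\,dh(s).
\]

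The key step is to identify where the Lebesgue--Stieltjes measure $dh$ lives. Since $h=\psi\circ F_\mu$ with $\psi(u):=e^{a(1-p)u}-p$ smooth, the complement $[0,y]\setminus M_\mu(y)$ is an at most countable disjoint union of half-open intervals $(c_k,c_k']$ on which $F_\mu$, and hence $h$, is constant; consequently $dh\big([0,y]\setminus M_\mu(y)\big)=0$. Conversely, every $z\in M_\mu(y)\cap(0,y]$ lies in the support of $dh$, since for every sufficiently small $\varepsilon>0$ one can pick $s\in(z-\varepsilon,z)$ with $F_\mu(s)<F_\mu(z)$, whence $dh((s,z])=h(z)-h(s)>0$.

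For part (1), $0\in M_\mu(y)$ vacuously and $h(0)>0$, so hypothesis \eqref{eq:int1} makes both terms in the decomposition non-negative, yielding $\phi_\mu^{(1)}(y)\ge\phi_\mu^{(2)}(y)$. The ``if'' direction of the equality characterisation is immediate from the formula. Conversely, equality of the fitnesses forces $h(0)(G_1(0)-G_2(0))=0$, so $G_1(0)=G_2(0)$, and $G_1-G_2=0$ for $dh$-almost every $s\in M_\mu(y)\cap(0,y]$. Since $G_1-G_2$ is continuous and every point of $M_\mu(y)\cap(0,y]$ is in the support of $dh$, any $z\in M_\mu(y)$ with $G_1(z)\ne G_2(z)$ would produce a neighbourhood on which $G_1-G_2\ne 0$ and which carries positive $dh$-mass, a contradiction; hence $G_1=G_2$ on all of $M_\mu(y)$.

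For part (2), I would argue by continuity in $a$: the integral $\int_0^y h(x)(f_1-f_2)(x)\dd x$ depends continuously on $a$ (dominated convergence, since $h$ is uniformly bounded for $a$ in a bounded set), and at $a=0$ it equals $(1-p)\big(\int_0^y f_1-\int_0^y f_2\big)>0$ by the hypothesis and $p<1$, so it stays strictly positive for all sufficiently small $a>0$. The main technical hurdle is the rigorous handling of the Stieltjes integration by parts and the support analysis of $dh$ when $F_\mu$ has atoms---in particular an atom at $0$, which is precisely what produces the boundary term $h(0)(G_1(0)-G_2(0))$---together with the careful bookkeeping of the left endpoints of the maximal intervals of constancy of $F_\mu$.
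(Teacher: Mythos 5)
Your proof is correct; it differs from the paper's mainly in the technical vehicle used to obtain the key decomposition. The paper proceeds by a layer-cake/Fubini argument: it writes $\int_0^y h(x)f_i(x)\,\dd x=\int_0^{h(y)}\bigl[\int_{x_0(z)}^y f_i(x)\,\dd x\bigr]\dd z$, where $h(x)=\exp(a(1-p)F_\mu(x))-p$ and $x_0(z)$ is the left endpoint of the superlevel set $\{h\ge z\}$, observes that the range of $z\mapsto x_0(z)$ is precisely $M_\mu(y)$, and derives the strictness statement from the left-continuity of $z\mapsto x_0(z)$ combined with the continuity of $w\mapsto\int_w^yf_i(x)\,\dd x$. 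Your Stieltjes integration by parts produces the same identity after the change of variables $z=h(s)$ (the paper's boundary contribution from $z\in[0,h(0)]$ is your term $h(0)(G_1(0)-G_2(0))$), so the reduction to the sign of $G_1-G_2$ on $M_\mu(y)$ against a positive measure is common to both. What your route buys is a cleaner isolation of the possible atom of $\mu$ at $0$ in the boundary term and a standard ``support of a positive measure'' argument for the equality case; what it costs is having to justify the Lebesgue--Stieltjes integration by parts for a merely right-continuous nondecreasing $h$, which you correctly flag as the main technical point. Part (2) is handled essentially identically: the paper bounds the non-boundary term by $\exp(a(1-p)F_\mu(y))-1$, which is your uniform smallness of $h-(1-p)$ as $a\to0$ made explicit. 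One cosmetic remark: the maximal intervals of constancy of $F_\mu$ need not all be of the form $(c_k,c_k']$ (they may be open on the right when $F_\mu$ is continuous there), but $\dd h$ still assigns them zero mass, so your support analysis goes through unchanged.
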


\begin{proof}[Proof of Proposition~\ref{pro:domination}]
According to equation~\eqref{eq:phiexplicit} we have
 \[
  \phi_\mu^{(i)}(y)=\exp(-aF_\mu(y))\Big[\int_0^y (\exp(a(1-p)F_\mu(x))-p)f_i(x)\dd x+p\Big], \quad i=1,2.
 \]
\begin{enumerate}
 \item  We just need to prove that $\int_0^y (\exp(a(1-p)F_\mu(x))-p)f_1(x)\dd x\ge \int_0^y (\exp(a(1-p)F_\mu(x))-p)f_2(x)\dd x$.
 This follows easily from the fact that $x \mapsto \exp(a(1-p)F_\mu(x))-p$ is a nondecreasing function and applying
Fubini's Theorem.
Indeed, 
for every $z \ge 0$, the set $\{x \ge 0 \colon \exp(a(1-p)F_\mu(x))-p \ge z\}$
is an interval of type $[x_0(z), y]$ (for some $x_0(z) \in [0,y]$); clearly $x_0(z)=0$ for all $z \in [0,1-p]$.
Hence 
 \begin{equation}\label{eq:1}
\begin{split}
\int_0^y (\exp(a(1-p)&F_\mu(x))-p)f_i(x)\dd x
=\int_0^y \Big [\int_0^{\exp(a(1-p)F_\mu(x))-p} \dd z \Big ]f_i(x)\dd x\\
&=\int_0^{\exp(a(1-p)F_\mu(y))-p} \Big [ \int_{\{x \in \mathbb{R}  \colon \exp(a(1-p)F_\mu(x))-p \ge z\}} f_i(x) \dd x\Big ] \dd z\\
&= \int_0^{\exp(a(1-p)F_\mu(y))-p} \Big [ \int_{[x_0(z),y]}  f_i(x) \dd x \Big ] \dd z\\
&=(1-p)\int_{[0,y]}  f_i(x) \dd x +
\int_{1-p}^{\exp(a(1-p)F_\mu(y))-p} \Big [ \int_{[x_0(z),y]}  f_i(x) \dd x \Big ] \dd z.
 \end{split}
\end{equation}
 The equivalence of the equalities is as follows. The ``if'' part is trivial. As for the reverse implication,
 note that $z \mapsto x_0(z)$ is left-continuous and that 
 $s=x_0(z)$ for some $z \in [0, \exp(a(1-p)F_\mu(y))-p]$ if and only if $s=\inf\{t \ge 0 \colon F_\mu(t) \ge \alpha\}$ for
 some $\alpha \le F_\mu(y)$, that is, if and only if $x_0(z) \in M_\mu(y)$. If 
 $\int_{0}^y f_1(x) \dd x > \int_{0}^y f_2(x) \dd x$ then $\phi_\mu^{(1)}(y)> \phi_\mu^{(2)}(y)$ 
follows from equation~\eqref{eq:1}. If
 $\int_{x_0(z)}^y f_1(x) \dd x > \int_{x_0(z)}^y f_2(x) \dd x$ for some $x_0(z)>0$ then by the continuity of 
 $w \mapsto \int_{w}^y f_1(x) \dd x$ and the left continuity of $z \mapsto x_0(z)$
 there exists
 $\varepsilon>0$ such that $\int_{x_0(s)}^y f_1(x) \dd x > \int_{x_0(s)}^y f_2(x) \dd x$ for all $s \in (z-\varepsilon, z]$;
 again equation~\eqref{eq:1} yields the strict inequality $\phi_\mu^{(1)}(y) > \phi_\mu^{(2)}(y)$.
 \item
 It follows from equation~\eqref{eq:1} 
 \[
 \begin{split}
\phi_{\mu}^{(1)}(y)-\phi_{\mu}^{(2)}(y) &\ge (1-p) \int_{[0,y]}  (f_1(x)-f_2(x)) \dd x -(\exp(a(1-p)F_\mu(y))-1).\\
\end{split}
 \]

 by taking the limit as $a$ goes to $0$. 
 \end{enumerate}
\end{proof}

\begin{proof}[Proof of Proposition~\ref{pro:averagefitnessclimatechange}]
 \begin{enumerate}
  \item It follows from the first part of Proposition~\ref{pro:timeintervals}.
  \item
  If $\underline{t}_1>\overline{t}_2$ then, by Remark~\ref{rem:maximal}, $\bar\lambda_\mu^{(2)}= (1-\exp(-a))/a$, which
  is larger than  $\bar\lambda_\mu^{(1)}$ and the proof is complete. 
  If $\underline{t}_1\le\overline{t}_2$, write $\bar\lambda_\mu^{(1)}=(1-\exp(-a))/a-\varepsilon$. 
  Note that, if $y\ge\overline t_2$, then
  $\phi_\mu^{(2)}(y)=\exp(-aF_\mu(y))\int_{\underline t_2}^{\overline t_2}\exp(a(1-p)F_\mu(x))f_2(x)dx$.
Thus, if we integrate with respect to $\mu$:
 \[\begin{split}
    \bar\lambda_\mu^{(2)}&\ge\int_{\underline{t}_2}^{\overline{t}_1}\phi_\mu^{(2)}(y)d\mu(y)\\
    & = \frac1a(\exp(-aF(\overline t_2)-exp(-a))\Big(\int_{\underline t_2}^{\underline t_1}f_2(x)dx
    +\int_{\underline t_1}^{\overline t_2}\exp(a(1-p)F_\mu(x))f_2(x)dx\Big)\\
    &\ge\frac1a(\exp(-aF_\mu(\overline t_2))-\exp(-a))\ge \frac1a(1-\exp(-a))-\varepsilon
   \end{split}
 \]
 where the last inequality holds if $F_\mu(\overline t_2)$ is sufficiently small.
  \end{enumerate}
\end{proof}

\begin{proof}[Proof of Proposition~\ref{pro:changepaveragefitness}]
By using some basic results of mathematical analysis, it is easy to check that
$
  \partial_p \int \phi_\mu(y) \mu(\dd  y) =   \int \partial_p \phi_\mu(y) \mu(\dd  y)
$
 where, from equation~\eqref{eq:phiexplicit},
 \[
  \partial_p \phi_\mu(y)=
  \exp(-aF_\mu(y))\Big[-a \int_0^y\exp(a(1-p)F_\mu(x))F_\mu(x)f(x)\dd x+\int_y^{\overline t_f}f(x)\dd x \Big].
 \]
 From equation~\eqref{eq:domination} we have 
 \begin{equation}\label{eq:domination1}
  \int_{(z,+\infty)} e^{-aF_\mu(y)} \mu(\dd y)\le \int_{(F_\mu(z),1]} e^{-ay} \dd y =\frac{e^{-aF_\mu(z)}-e^{-a}}{a}. 
 \end{equation}
According to Fubini-Tonelli's Theorem  
 \[
 \begin{split}
\partial_p \bar\lambda_\mu&=  \partial_p \int\phi_\mu(y) \mu(\dd  y)= \int\partial_p \phi_\mu(y) \mu(\dd  y)\\
&= -a \int_0^{\overline t_f} e^{a(1-p)F_\mu(x)} F_\mu(x)f(x)
\Big ( \int_{(x,+\infty)} e^{-aF_\mu(y)} \mu(\dd y) \Big )\dd x \\
&\phantom{=}+
\int_0^{\overline t_f} f(x) \Big ( \int_{[0,x]} e^{-aF_\mu(y)} \mu(\dd y)\Big )  \dd x
\\
&= \int_0^{\overline t_f} f(x) \Big [ \int_{[0,x]} e^{-aF_\mu(y)} \mu(\dd y)
-a  e^{a(1-p)F_\mu(x)} F_\mu(x) \Big ( \int_{(x,+\infty)} e^{-aF_\mu(y)} \mu(\dd y) 
\Big ) \Big ]  \dd x \\
\end{split}
 \]
 Let us study the expression between brackets by means of the function $\bar h_p$ defined by the following equation
 \[
 \begin{split}
  h_p(x)&:= \int_{[0,x]} e^{-aF_\mu(y)} \mu(\dd y)
-a  e^{a(1-p)F_\mu(x)} F_\mu(x) \Big ( \int_{(x,+\infty)} e^{-aF_\mu(y)} \mu(\dd y) 
\Big )\\
&\ge \int_{[0,x]} e^{-aF_\mu(y)} \mu(\dd y)
-e^{-apF_\mu(x)} F_\mu(x) \big ( 1-e^{-a(1-F_\mu(x))} \big )=:\bar h_p(x)
\end{split}
\]
which holds for every $x \in [0,\overline t_f]$, $p \in [0,1]$ 
(the inequality comes from equation~\eqref{eq:domination1}).
 Clearly $p \mapsto \bar h_p(x)$ is nondecreasing hence $\bar h_p(x) \ge \bar h_0(x)$ for all $x \in [0, \overline t_f]$.
 In particular $p \mapsto \bar h_p(x)$ is strictly increasing if $F_\mu(x)>0$; in this case $\bar h_p(x) > \bar h_0(x)$.
 Suppose that $a \le 2 \log(2)$; thus $e^{-az}  + e^{-a(1-z)} \ge 2 e^{-a/2} \ge 1$.
 Now, since $F_\mu(0)=\mu(0)$,
 \[
  \bar h_0(0)=\mu(0) e^{-a\mu(0)}-\mu(0) \big (1-e^{-a(1-\mu(0))}\big )=\mu(0) \big (e^{-a\mu(0)}+e^{-a(1-\mu(0))}-1\big ) \ge 0.
 \]
 Moreover, if $ z \ge x \ge 0$ we have
 \[
 \begin{split}
 \bar h_0(z)-\bar h_0(x)&= \int_{(x,z]} e^{-aF_\mu(y)} \mu(\dd y)
-  F_\mu(z) \big (1- e^{-a(1-F_\mu(z))} \big ) + F_\mu(x) \big (1- e^{-a(1-F_\mu(x))} \big )\\
 &= \int_{(x,z]} e^{-aF_\mu(y)} \mu(\dd y)
 - (F_\mu(z)- F_\mu(x)) \big (1- e^{-a(1-F_\mu(z))} \big ) \\
 &\phantom{\mu(\dd y)}+ F_\mu(x) \big (e^{-a(1-F_\mu(z))} - e^{-a(1-F_\mu(x))} \big )\\
 & \ge (F_\mu(z)- F_\mu(x)) (e^{-a F_\mu(z)}+e^{-a(1-F_\mu(z))}-1) \\
 &\phantom{\mu(\dd y)}+ F_\mu(x) \big (e^{-a(1-F_\mu(z))} - e^{-a(1-F_\mu(x))} \big ) \ge 0
 \end{split}
 \]
 where the last inequality is strict if and only if $F_\mu(z)>F_\mu(x)$ 
 (we also used the fact that $\int_{(x,z]} e^{-aF_\mu(y)} \mu(\dd y) \ge e^{-aF_\mu(z)}\mu((x,z])=e^{-aF_\mu(z)}(F_\mu(z)- F_\mu(x))$).
 This means that, for all $x \in [0,\overline t_f]$, $p \in [0,1]$ we have $h_p(x) \ge \bar h_p(x) \ge \bar h_0(x) \ge \bar h_0(0) \ge 0$.
 Observe that $\partial_p \bar\lambda_\mu(a,p,\mu,f)=\int_0^{\overline t_f} f(x) h_p(x) \dd x$.
 Whence, for all $a \le 2 \log (2)$, for all $\mu$ and for all $f$, we have that 
$\partial_p \bar\lambda_\mu(a,p,\mu,f) \ge 0$ for all $p \in [0,1]$.
 
 Observe that $F_\mu(x)=0$ for all $x < \overline t_f$ if and only if $\overline t_f > \inf \mathrm{supp}(\mu)$.
 Hence, if $F_\mu(x)=0$ for all $x < \overline t_f$ clearly $\phi_\mu(y)=1$ for all $y \in \mathrm{supp}(\mu)$ which implies
 $\bar\lambda_\mu(a,p,\mu,f)=1$ for all $p \in [0,1]$.
 
 On the other hand if $F_{\mu}(x)>0$ for some $x < \overline t_f$ we have that $h_p(z)>0$ for every $z \in [x, \overline t_f]$, $p \in (0,1]$;
 thus
 for all $a \le 2 \log (2)$, for all $\mu$ and for all $f$, we have that 
$\partial_p \bar\lambda_\mu(a,p,\mu,f) > 0$ for all $p \in (0,1]$.

\end{proof}

\end{document}